\newtheorem{theorem}{Theorem}[section]
\newtheorem{lemma}[theorem]{Lemma}
\newtheorem{corollary}[theorem]{Corollary}
\theoremstyle{definition} \newtheorem{remark}[theorem]{Remark}
\newtheorem{remarks}[theorem]{Remarks}
\newtheorem{definition}[theorem]{Definition}
\allowdisplaybreaks \numberwithin{equation}{section}
\newcommand{\eps}{\varepsilon} \newcommand\1{\hbox{\kern.375em\vrule
    height1.57ex depth-.1ex width.05em\kern-.375em \rm 1}}
 \newcommand\E{\mathbb{E}}
\newcommand\N{\mathbb{N}} \newcommand\R{\mathbb{R}}
\renewcommand\P{\mathbb{P}}
\newcommand\SF{\mathscr F}
\newcommand\mequal{\overset{\text{\tiny m}}{=}}
\newcommand\HS{\text{\rm\tiny HS}}
\def\mathpal#1{\mathop{\mathchoice{\text{\rm #1}}%
    {\text{\rm #1}}{\text{\rm #1}}%
    {\text{\rm #1}}}\nolimits} \def\id{{\mathpal{id}}}
\def\beq{\begin{equation}}\def\si{\sigma} \def\d{{\rm d}}
   \def\vd{\mathrm{d}} \def\bd{{\bf d}}
  \def\r{\right} \def\l{\left} \def\e{\operatorname{e}}\def\<{\langle}
  \def\>{\rangle} 
   \def\Ent{{\operatorname{Ent}}}
  \def\Ric{{\operatorname{Ric}}} 
  \def\Hess{{\operatorname{Hess}}} \def\tr{\operatorname{tr}}
  \newcommand{\ptr}{/\!/} \def\ff{\frac}\def\nn{\nabla}
  \def\II{{\operatorname{II}}} \def\pp{\partial}\def\DD{\Delta}
  \def\B{\mathcal B}\def\ll{\lambda}
  \def\vv{\varepsilon}\def\beq{\begin{equation}}\def\beg{\begin}
      \def\rr{\rho} \def\ss{\sqrt}\def\aa{\alpha}\def\tt{\tilde}
      \newcommand\newdot{{\kern.8pt\cdot\kern.8pt}}
\begin{document}

      \title[Second order Bismut formulae for Neumann semigroups]
      {Second Order Bismut formulae and applications to Neumann
        semigroups on manifolds}

      \author{Li-Juan Cheng\textsuperscript{1}, Anton
        Thalmaier\textsuperscript{2} and Feng-Yu
        Wang\textsuperscript{3}}

      \address{\textsuperscript{\rm1}School of  Mathematics, Hangzhou Normal University,\\
        Hangzhou 311123, The People's Republic of China}
      \address{\textsuperscript{\rm2}Mathematics Research Unit, FSTC, University of Luxembourg,\\
        Maison du Nombre, 4364 Esch-sur-Alzette, Luxembourg}
      \address{\textsuperscript{\rm3}Center for Applied Mathematics,
        Tianjin
        University,\\ Tianjin 300072, People's Republic of China\\ {\rm and} Department of Mathematics, Swansea University,\\
        Swansea SA1 8EN, United Kingdom}
      \email{lijuan.cheng@hznu.edu.cn, anton.thalmaier@uni.lu,
        wangfy@tju.edu.cn}

      \begin{abstract} Let $M$ be a complete connected Riemannian
        manifold with boundary $\pp M$, and let $P_t$ be the Neumann
        semigroup generated by $\ff 1 2 L$ where $L=\DD+Z$ for a
        $C^1$-vector field $Z$ on $M$. We establish Bismut type
        formulae for $LP_t f$ and $\Hess_{P_tf}$ and present estimates
        of these quantities under suitable curvature conditions.  In
        case when $P_t$ is symmetric in $L^2(\mu)$ for some
        probability measure $\mu$, a new type of log-Sobolev
        inequality is established which links the relative entropy
        $H$, the Stein discrepancy~$S$, and relative Fisher
        information~$I$, generalizing the corresponding result of
        \cite{CTW} in the case without boundary.
      \end{abstract}

      \keywords{Diffusion semigroup; Bismut formula; Hessian formula;
        Stein's method; Hessian estimate} \subjclass[2010]{58J65,
        58J35, 60J60} \date{\today}

      \maketitle

      \section{Introduction}

      Consider a $d$-dimensional complete Riemannian manifold $M$,
      possibly with non-empty boundary $\partial M$, and let $X_t$ be
      the reflecting diffusion process on $M$ generated by $\frac12 L$
      where $L=\Delta+Z$; here $\Delta$ is the Laplace-Beltrami
      operator and $Z$ a smooth vector field on
      $M$. 
      According to \cite{Hsu,Ikeda-Watanabe:1989,Wbook14}, the
      reflecting diffusion process $X_t^x$ starting at $x$ can be
      constructed as solution to the following SDE on $M$ with
      reflection:
      \begin{equation}\label{Eq:reflBM}
        \vd X^x_t=\ptr_t\circ\,\vd B_t
        +\frac12 Z(X^x_t)\,\vd t+\frac12 N(X^x_t)\,\vd l_t^x,\quad X_0^x=x,
      \end{equation}  
      where $B_t$ is a standard Brownian motion on the tangent space
      $T_xM\equiv\R^d$, $\ptr_t\colon T_xM\rightarrow T_{X^x_t}M$ the
      stochastic parallel transport along $X_t^x$, $N$ the inward
      normal unit vector field on $\partial M$, and $l_t^x$ the local
      time of $X_t^x$ on $\partial M$.  Throughout this paper, we
      assume that SDE \eqref{Eq:reflBM} is non-explosive. Then the
      Neumann semigroup $P_t$ generated by $\ff 1 2 L$ is given by
$$P_tf(x)=\E\l[f(X_t^x)\r],\quad t\ge 0,\ x\in M,\ f\in \B_b(M)$$
where $\B_b(M)$ denotes the set of bounded measurable functions on
$M$.
 
To study the regularity of diffusion semigroups using tools from
stochastic analysis, Bismut \cite{Bismut:1984} introduced his famous
probabilistic formula for the gradient of heat semigroups on
Riemannian manifolds without boundary. This type of formulae has been
studied in \cite{EL94, Tha97, DTh2001} using martingale arguments, and
been extended to second order derivatives in \cite{APT2003, EL94,
  St00, StT98, Tha97, Th19, Li2021}.

In the case the boundary of $M$ is non-empty, Bismut type formulae
have been derived in \cite{Wbook14, Cheng_Thalmaier_Thompson:2018} for
the gradient of the Neumann semigroup $P_t$, see also \cite{Qian:1997,
  Hsu, Wang:2020} for gradient estimates. In this paper, we aim at
establishing Bismut type formulae for second order derivatives of the
Neumann semigroup, along with some geometric applications.

Let $\Ric_Z:=\Ric-\nabla Z$ where $\Ric$ is the Ricci curvature
tensor, and let $\II$ be the second fundamental form of the boundary:
\begin{align*}
  \II(X,Y)=-\l<\nabla_X N, Y\r>,\quad \text{$X,Y\in T_x\partial M,\ x\in\partial M$}.
\end{align*}
A derivative formula for $P_tf$ is given in \cite{Hsu,Wbook14} by
constructing an appropriate multiplicative functional. Throughout the
paper, we assume that the reflecting diffusion process generated by
$L$ is non-explosive, and that there exist functions $K\in C(M)$ and
$\si \in C(\partial M)$ such that \beq\label{RCC} \Ric_Z:=\Ric-\nabla
Z\geq K,\quad \ \II\geq \sigma,\end{equation} i.e.
$\Ric_Z(X,X)\ge K(x)|X|^2$ for $x\in M,\ X\in T_xM$, and
$\II(X,X)\ge \si(x) |X|^2$ for $x\in \pp M,\ X\in T_x \pp M.$ Under
the assumption that

\smallskip\fbox{  \parbox{0.8 \textwidth}{
    \beg{enumerate} \item[{\bf (A)}]  the functions $K$ and $\si$ in \eqref{RCC} are constant,  $\E[\e^{\si^- l_t}]<\infty$ for any $t\geq0$,\end{enumerate} }
}\smallskip

\noindent
a Bismut type
formula for $\nabla P_tf$ has been established in \cite{Wbook14} for $f\in \B_b(M)$
such that  $\nabla P_{\cdot}f$ is bounded on $[0,t]\times M$. More precisely, there exists
a family of random homomorphisms
$Q_t\colon T_xM\rightarrow T_{X_t^x}M$ with the property that
\begin{align*}
  |Q_t|\leq \e^{-Kt/2-\sigma l_t/2}\ \text{ and }\ 
  \langle N(X_t^x), Q_t(v)\rangle\1_{\{X_t^x\in \partial M\}}=0,\quad v\in T_xM,
\end{align*}
such that
\begin{align}\label{Eq:gradient_formula}
  \nabla P_tf(v)=\E\left[\langle\nabla f(X_t^x), Q_t(v)\rangle\right],\quad v\in T_xM,
\end{align}
and
\begin{align}\label{Eq:BismutdPtf}
  \nabla P_tf(v)=\E\l[f(X_t^x)\int_0^t\langle h'(s) Q_s(v), \ptr_s \vd B_s \rangle\r],
  \quad v\in T_xM
\end{align}
for any choice of a non-negative $h\in C_b^1([0,t])$ such that
$h(0)=0,\ h(t)=1$.
When $\Ric_Z$ and $\II$ are bounded from below, the second part of condition {\bf (A)} holds if either $\pp M$ is  convex, or if $\pp M$ has strictly positive injectivity radius,   the sectional curvature of $M$ being bounded above and $Z$ bounded, see \cite[Section 3.2]{Wbook14}.   

\

The aim of this paper is to extend  \eqref{Eq:gradient_formula} and
\eqref{Eq:BismutdPtf} to second order derivatives and to establish Bismut type formulae for $L P_tf$ and $\Hess_{P_t f}:=\nabla \bd P_tf$, along with some applications. When compared to the case without boundary as in \cite{APT2003},  the present study faces an essential new difficulty. Indeed, by formal calculations, the Bismut formula for second derivatives of $P_tf$ includes 
a stochastic integral of $Q_t^{-1}$, the inverse of  the above mentioned multiplicative functional $Q_t$. However, in the present setting $Q_t$ is singular near the boundary
so that existence of the desired stochastic integral poses a problem.    

\

In Sections~\ref{LPf},  we derive a Bismut type formula for $LP_tf$ also in terms of the multiplicative functional $Q_t$, which provides as consequence an upper estimate depending on the lower bounds of $\Ric_Z$ and~$\II$, more precisely, condition ${\bf(A)}$ and $\|Z\|_{\infty}<\infty$. 
This estimate is new even in the case  without boundary where the existing estimate in \cite{Th19} depends on the  uniform norm of $\Ric_Z$.  
 
\

In Section~\ref{HessPf},  we  establish a   Bismut type  
formula for $\Hess_{P_tf}$ and use it for Hessian estimates of $P_tf$. Establishing formulas for the Hessian naturally requires more knowledge on the curvature of the manifold. Note that in  \cite{APT2003,Th19, Li2021, StT98}, the authors used information related to curvature and
its derivative to establish 
a Hessian formula and deduced estimates in terms of these curvature conditions. When it comes to manifolds with boundary, it seems unavoidable to exploit geometric information concerning the boundary as well.  Before going into the details, let us remark that the multiplicative functional $Q_t$ in the derivative 
formula  \eqref{Eq:gradient_formula} satisfies $$\langle N(X_t), Q_t(v) \rangle\1_{\{X_t\in \partial M\}}=0$$  which is reasonable since 
$$\langle \nabla P_{T-t}f(X_t), \ N(X_t) \rangle\1_{\{X_t\in \partial M\}}=0.$$
It follows that  to express  $\nabla P_tf$ on the boundary, information on the second fundamental
form
$$\II^{\sharp} (P_{\partial} (v))=-(\nabla _{P_{\partial} (v)}N)^{\sharp}$$ 
is sufficient. However, when it comes to the second order derivative of $P_tf$ on the boundary, no condition like 
$$\Hess_{ P_{T-t}f} (N(X_t),\cdot) \1_{\{X_t\in \partial M\}}=0$$
is satisfied, 
which naturally demands for full information on $\nabla N$. This indicates that one not only needs to control $\II$ but also $\nabla_N N$.  For this reason, in Section~\ref{HessPf}, two new 
functionals $\tilde{Q}_t$ and  $W_t$ are introduced in
\eqref{tilde-Q} and \eqref{W-def} respectively, which our Bismut formula for $\Hess_{ P_tf}$ will be based on and which then allow to derive upper bounds.
    
\

In Section \ref{Stein-log-Sobolev},  we apply the Hessian estimates of $P_tf$
to prove inequalities connecting  the relative entropy $H$, the  Stein
discrepancy $S$,  and the  relative Fisher information $I$, which extend the corresponding results derived in our recent work  \cite{CTW} for $\pp M=\emptyset$ 
to the case with boundary; see Ledoux, Nourdin and Peccati \cite{LNP15}
for the earlier study in the Euclidean case  $M=\R^d$.

\section{Bismut   formula and   estimate for  $LP_tf$ }\label{LPf}

To state the main result, we first recall the construction of the multiplicative functional 
$Q_t$ appearing in the Bismut formula, see \cite{Th19} for the case without boundary.

For $t\ge 0$, let $\ptr_{0\to t}: T_{X_0}M\to T_{X_t}M$ denote stochastic parallel transport along the paths of the reflecting diffusion process $X$.  
The covariant differential  ${\rm D}$ in $t\ge 0$ is defined as 
${\rm D}   :=    \ptr_{0\to t}\,\d\, \ptr_{t\to 0}$ where  $\d$ is the usual It\^o stochastic differential in $t\ge 0$. For a process $v_t\in T_{X_t}M$ we then have
$${\rm D} v_t = \ptr_{0\to t}\,\d\, \ptr_{t\to 0}\,v_t,\quad t\ge 0.$$
For $n\in \N$ and $t\geq 0$, let
$Q^{(n)}_{t}\colon T_{X_0}M\rightarrow T_{X_t}M$ solve the 
covariant differential equation:
\beq\label{QN} 
{\rm D} Q_{t}^{(n)}=-\frac12 \l\{\Ric_Z^{\sharp}(Q_{t}^{(n)})\, \vd t+\II^{\sharp}(Q_{t}^{(n)})\, \vd l_t+nP_N(Q_{t}^{(n)})\, \vd l_t\r\},\quad t\geq 0, \ \ Q_{0}^{(n)}=\id,
\end{equation}
where $ \id$ is the identity map on $T_{X_0}M$ and $P_N$ the projection operator onto the normal direction $N$ of $\pp M$ such that 
when $X_t\in \pp D$, 
  $$P_NQ_{t}^{(n)}v=\langle Q_{t}^{(n)}v, N(X_t) \rangle\, N(X_t),\quad v\in T_{X_0}M.$$
Furthermore for $P_\pp: T_xM\to T_x\pp M$ being the projection operator for $x\in \pp M$, let
  $$\<\II^{\sharp}(Q_{t}^{(n)})v_1, v_2\>:= \II( P_\pp Q_{t}^{(n)} v_1, P_\pp v_2),\quad v_1,v_2\in T_{X_t}M,\  X_t\in \pp M.$$
By the curvature conditions \eqref{RCC}, we then have 
 \beq\label{QNU} \sup_{n\ge 1} |Q_t^{(n)}|\le \e^{-\ff 1 2 \int_0^t K(X_s)\,\d s-\ff 1 2 \int_0^t\si(X_s)\,\d l_s}, \quad t\ge 0,\end{equation} 
  \beq\label{QNU'} \int_0^t|P_N (Q_s^{(n)})|^2\, \d l_s \le    \ff 1 n \int_0^t |Q_s^{(n)}|^2 \big\{K^-(X_s)\,\d s + \si^-(X_s)\,  \d l_s\big\}\to 0\ \text{ as}\ n\to\infty.  \end{equation}
 Define 
  \beq\label{An} \{Q_t^{(n)}\}^{-1}:= \ptr_{t\to 0}\{Q_t^{(n)} \ptr_{t\to 0}\}^{-1}:\ T_{X_t}M\to T_xM\end{equation} 
  where $\{Q_t^{(n)} \ptr_{t\to 0}\}^{-1}$ is the inverse of the operator
  $$Q_t^{(n)} \ptr_{t\to 0}:\ T_{X_t}M\to T_{X_t}M.$$  
  To show that $\{Q_t^{(n)}\}^{-1}$ exists, let
$$ \tau_k:=\inf\big\{t\ge 0:\ \rr_o(X_t)\ge k\big\},\ \ k\ge 1,$$
 where $\rr_o$ is the Riemannian distance to a reference point $o\in M$. Fix $T>0$. By \cite[Lemma 3.1.2]{Wbook14}, we have
\beq\label{TK} \E[\e^{\ll l_{T\land\tau_k} }]<\infty,\ \ \ll>0.\end{equation}
Since $\Ric_Z$ and $\II$ are locally bounded, \eqref{QN} and \eqref{TK} imply that $Q_t^{(n)} \ptr_{t\to 0}$ is invertible with
\beq\label{QNT} \E \bigg[\sup_{t\in [0, T\land\tau_k]} \big|\{Q_t^{(n)}\}^{-1}\big|^p\ \bigg]= \E \bigg[\sup_{t\in [0, T\land\tau_k]} \big|\{Q_t^{(n)}\ptr_{t\to 0}\}^{-1}\big|^p \bigg] <\infty,\ \ p,k\ge 1.\end{equation}

 To derive a Bismut formula for $LP_tf$, we need to estimate the martingales 
\beq\label{MN} M_t^{(h,n)} := \int_0^{t}\bigg\< h_s Q_s^{(n)} \int_0^s h_r \{Q_{r}^{(n)}\}^{-1}\ptr_r \vd B_r,\,  \ptr_s\vd B_s\bigg\>,\ \ n\ge 1\end{equation}
 for a reference  adapted real process $h$.  When   $M$ is compact,   \cite[Theorem 3.4]{Hsu} implies that 
as  $n\rightarrow \infty$  the process $Q_{t}^{(n)}$ converges in $L^2(\P)$ to an
  adapted right-continuous process $Q_{t}$ with left-limits such that
  $P_{N}Q_{t}=0$ if $X_t\in \partial M$.  This construction has been extended  in
  \cite[Proof of Theorem 3.2.1]{Wbook14} to non-compact manifolds. However, although $\{Q_t^{(n)}\}^{-1}$ exists for every $n\ge 1$,
  $Q_t$ is not invertible on the boundary since $P_{N}Q_{t}=0.$ Hence a priori,
  existence of the stochastic integral
$$  \int_0^{t}\bigg\< h_s Q_{s} \int_0^s h_r Q_{r}^{-1}\ptr_r \vd B_r,\,  \ptr_s\vd B_s\bigg\>$$
 is not obvious.

  \beg{lemma}\label{L1}  Let $K\in C(M)$ and $\sigma \in C(\partial M)$ such that $\eqref{RCC}$ holds.  Then  for any 
    adapted real process $(h_t)_{t\in [0,T]}$ with
   \begin{align}\label{def-Ch}
   C(h):=\E\l[\int_0^T h_s^2 \e^{\int_0^s K^-(X_r)\d r +\int_0^s \si^-(X_r) \d l_r}\d s \r]<\infty,
   \end{align}
the martingales $M_t^{(h,n)}$  in $\eqref{MN}$ satisfy
 \beq\label{MES}  \beg{split}\sup_{n\ge 1} \E\l[\sup_{t\in [0,T]} \big|M_t^{(h,n)}\big|\r]
\le  3\Big(3+\sqrt{10}\Big)\,  \left(C(h)\, \E\int_0^T h_s^2\, \vd s\right)^{1/2}<\infty.
\end{split} \end{equation}
In addition, if there is a constant $\aa>1$ such that
\beq\label{JQ}\E\left[\bigg(\int_0^T h_s^2  \d s\bigg)^{\aa}\right] <\infty,\end{equation}
then there exists a real random  variable $M_T^h$ with 
$\E \big[|M_T^h|^{\ff{2\aa}{1+\aa}}\big] <\infty$,
and a subsequence $n_m\to\infty$ as $m\to\infty$, such that  
\beq\label{WK} \lim_{m\to\infty }\E\big[\eta M_T^{(h,n_m)}\big]= \E\big[\eta M_T^h\big],\ \ \ \eta \in L^{\ff{2\aa}{\aa-1}}(\P).\end{equation} 
In  case \eqref{JQ} holds for $\alpha=1$ as well, one has
$$\E\big[|M_T^h|\big]\le 3\Big(3+\sqrt{10}\Big)\,  \left(C(h)\, \E\bigg[ \int_0^T h_s^2\, \vd s\bigg]\right)^{1/2}.$$
   \end{lemma}
  \beg{proof}    (a) We first prove \eqref{MES}. By Fatou's lemma, it suffices to  show
  \beq\label{PO1}\beg{split} I_{k,n}&:=\E\left[\sup_{t\in [0,T\land\tau_k]} \bigg|\int_0^{t}\bigg\langle h_s Q^{(n)}_{s} \int_0^s h_r \{Q^{(n)}_{r}\}^{-1}\ptr_r \vd B_r,\,  \ptr_s\vd B_s\bigg\rangle\bigg|\right] \\
&\le  3\l(3+\sqrt{10}\r)\, \sqrt{C(h)}\,  \left( \E\int_0^T h_s^2\, \vd s\right)^{1/2},\ \ \ k,n\ge 1.\end{split}\end{equation} 
For fixed $n\ge 1$ let 
\beg{align*}  \xi_s := Q_s^{(n)} \int_0^s h_r \{Q_r^{(n)}\}^{-1} \ptr_r\d B_r,\ \ s\ge 0.\end{align*}  
By Lenglart's inequality (see \cite[Proposition 5.69]{Bbook14}) and Schwartz's inequality, it follows that
\beq\label{PO2} \beg{split} I_{k,n}& \le 3 \E\bigg[\bigg(\int_0^{T\land\tau_k} |\xi_s|^2h_s^2 \d s\bigg)^{1/2}\bigg]\\
&\le 3 \E\left[\sup_{s\in [0,T\land\tau_k]} |\xi_s| \e^{-\ff 1 2 \int_0^s  K^-(X_r)\d r- \ff 1 2 \int_0^s \si^-(X_r)\d l_r} \bigg(\int_0^{T\land\tau_k} h_s^2 \e^{\int_0^s K^-(X_r)\d r+\int_0^s \si^-(X_r)\d l_r} \d s\bigg)^{1/2}\right]\\
&\le 3\left\{\E\l[\sup_{s\in [0,T\land\tau_k]} |\xi_s|^2 \e^{- \int_0^s  K^-(X_r)\d r-   \int_0^s \si^-(X_r)\d l_r} \r] C(h)\right\}^{1/2}.\end{split}\end{equation}  
Furthermore, by It\^o's formula we have
  \beg{align*} \d |\xi_s|^2 &= 2\<\xi_s, h_s\ptr_s\d B_s\>-\big\{\Ric_Z(\xi_s,\xi_s)\d s+ \II(\xi_s,\xi_s)\d l_s\big\}- n |P_N \xi_s|^2\d l_s+\d[\xi,\xi]_s\\
  &\le 2\<\xi_s, h_s\ptr_s\d B_s\>-K(X_s)|\xi_s|^2\d s-\si(X_s)|\xi_s|^2\d l_s+h_s^2\d s,\ \ 0\le s< \tau_k, \end{align*}
which implies
\beq\label{PM} \d \Big\{|\xi_s|^2  \e^{- \int_0^s  K^-(X_r)\d r-   \int_0^s \si^-(X_r)\d l_r}\Big\}\le    \e^{- \int_0^s  K^-(X_r)\d r-   \int_0^s \si^-(X_r)\d l_r}\big\{2\<\xi_s, h_s\ptr_s\d B_s\>+h_s^2\d s\big\},\quad 0\le s< \tau_k.\end{equation} 
By the condition $\xi_0=0$ and Lenglart's inequality,  we  have
\beg{align*} &\E\l[\sup_{s\in [0,T\land\tau_k]} |\xi_s|^2 \e^{- \int_0^s  K^-(X_r)\,\d r-\int_0^s \si^-(X_r)\,\d l_r} \r]\\
&\le 6 \E\l[\bigg(\int_0^{T\land\tau_k}|\xi_s|^2h_s^2 \e^{- 2\int_0^s  K^-(X_r)\,\d r-  2 \int_0^s \si^-(X_r)\,\d l_r}\,\d s\bigg)^{1/2}\r] +\E \bigg[\int_0^T h_s^2\,\d s\bigg]\\
&\le   6 \E\bigg[\bigg(\sup_{s\in [0,T\land\tau_k]} |\xi_s|^2 \e^{- \int_0^s  K^-(X_r)\,\d r-   \int_0^s \si^-(X_r)\,\d l_r} \bigg) \bigg(\int_0^{T\land\tau_k}h_s^2  \,\d s\bigg)^{1/2}\bigg] +\E \bigg[\int_0^T h_s^2\,\d s \bigg] \\
&\le   \ff 1 {2\delta} \E\bigg[\sup_{s\in [0,T\land\tau_k]} |\xi_s|^2 \e^{- \int_0^s  K^-(X_r)\,\d r-   \int_0^s \si^-(X_r)\,\d l_r} \bigg]+18 \delta \E\int_0^T h_s^2\,\d s+\E \bigg[\int_0^T h_s^2\,\d s\bigg],\end{align*}
for any $\delta>0$. Taking the optimal choice $\delta= \ff 1 6\big(3+\ss{10}\big)$, we obtain 
$$\E\l[\sup_{s\in [0,T\land\tau_k]} |\xi_s|^2 \e^{- \int_0^s  K^-(X_r)\,\d r-   \int_0^s \si^-(X_r)\,\d l_r} \r]\le \Big(3+\sqrt{10}\Big)^2 \E \bigg[ \int_0^T h_s^2\,\d s\bigg].$$
Combining this with \eqref{PO2}, estimate \eqref{PO1} follows by letting $k$ tend to $\infty$.
\smallskip

(b) Assume that \eqref{JQ} holds for some $\aa>1$. By estimate \eqref{PM} and the Burkholder-Davis-Gundy inequality, we can find  constants $c_1,c_2>0$ such that 
\beg{align*} &\E\l[\sup_{s\in [0,T\land\tau_k]}\Big( |\xi_s|^2 \e^{- \int_0^s  K^-(X_r)\,\d r-   \int_0^s \si^-(X_r)\,\d l_r}\Big)^\aa \r]\\
&\le   c_1  \E\l[\l(\int_0^{T\land\tau_k}|\xi_s|^2h_s^2 \e^{- 2\int_0^s  K^-(X_r)\,\d r-  2 \int_0^s \si^-(X_r)\,\d l_r}\, \d s\r)^{\aa/ 2}\r] +c_1 \E\l[\bigg(\int_0^T h_s^2\,\d s\bigg)^\aa\r]\\
&\le   c_1 \E\l[\l(\l[\sup_{s\in [0,T\land\tau_k]} |\xi_s|^2 \e^{- \int_0^s  K^-(X_r)\,\d r-   \int_0^s \si^-(X_r)\,\d l_r} \r] \int_0^{T\land\tau_k}h_s^2  \,\d s\r)^{\aa/ 2}\r] +c_1 \E\l[\bigg(\int_0^T h_s^2\,\d s\bigg)^\aa\r] \\
&\le   \ff {1} {2} \E\l[\sup_{s\in [0,T\land\tau_k]}\Big( |\xi_s|^2 \e^{- \int_0^s  K^-(X_r)\,\d r-   \int_0^s \si^-(X_r)\,\d l_r} \Big)^\aa\r]+\ff {c_1^2} {2}\E \l ( \int_0^T h_s^2\,\d s\r)^{\aa}+c_2 \E \bigg(\int_0^T h_s^2\,\d s\bigg)^\aa.\end{align*}
Together with \eqref{JQ} this implies 
$$ G:=  \E\l[\sup_{s\in [0,T]}\bigg( |\xi_s|^2 \e^{- \int_0^s  K^-(X_r)\,\d r-   \int_0^s \si^-(X_r)\,\d l_r}\bigg)^\aa \r]<\infty.$$
On the other hand, by Burkholder-Davis-Gundy's inequality, there exists a constant $c_3>0$ such that
\beg{align*} 
\E\l[\big|M_T^{(h,n)}\big|^{\ff {2\aa}{1+\aa}}\r]&\le c_3 \E\l[\bigg(\int_0^T |\xi_s|^2 \e^{- \int_0^s  K^-(X_r)\,\d r-   \int_0^s \si^-(X_r)\,\d l_r}h_s^2\e^{ \int_0^s  K^-(X_r)\,\d r+ \int_0^s \si^-(X_r)\,\d l_r}\,\d s\bigg)^{\ff \aa {1+\aa}}\r]\\
&\le c_3 {G}^{\ff 1 {1+\alpha}}\l(\E  \int_0^T h_s^2\e^{ \int_0^s  K^-(X_r)\,\d r+ \int_0^s \si^-(X_r)\,\d l_r}\,\d s\r)^{\ff \aa{1+\aa}}<\infty,\ \ n\ge 1.\end{align*}
Thus $\{M_T^{(h,n)}\}_{n\ge 1}$ is bounded in $L^{\ff{2\aa}{1+\aa}}(\P)$, and hence has a subsequence converging weakly to a random variable $M_T^h$ in $L^{\ff{2\aa}{\aa-1}}(\P)$.
    \end{proof}

\begin{theorem}\label{Hessian-th}  
 Let $K\in C(M)$ and $\si\in C(\partial M)$ such that $\eqref{RCC}$ holds.  
 For $T>0$ and  $x\in M$, let $h_t$ be an adapted real process such that
 $\int_0^T h_s\,\d s=-1$ and 
 \beq\label{CO} 
\E\l[\int_0^T h_s^2 \l (\e^{\int_0^s K^-(X^x_r)\,\d r +\int_0^s \si^-(X^x_r) \,\d l_r}+|Z(X_s^x)|^2\r)\,\d s\r] <\infty.
  \end{equation}
 Then, for any $f\in \B_b(M)$,  
  \beq\label{BSL} 
    L(P_Tf)(x)=2\,\E\l[f(X_T^x)\bigg(M_T^{ h}+ \int_0^{T}\Big\langle \tt h_s  h_sZ(X_s^x),\,  \ptr_s\vd B_s\Big\rangle\bigg) \r],
  \end{equation} 
where $\tt h_t:=1+\int_0^t h_s\, \d s$.
Consequently, 
  \beq\label{BL0} | L(P_Tf)(x)|\le 6 \|f\|_\infty \l\{ \E\bigg[\bigg(\int_0^T |\tt h_s h_s Z(X_s^x)|^2\,\d s\bigg)^{1/2}\bigg] +  \Big(3+\ss {10}\Big)  \bigg(C(h)\, \E\int_0^T h_s^2\, \vd s\bigg)^{1/2}\r\}.\end{equation}
 
 \end{theorem}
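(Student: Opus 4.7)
My strategy is to first establish an analogue of \eqref{BSL} in which $M_T^h$ is replaced by the well-defined $M_T^{(h,n)}$, and then pass to the limit $n\to\infty$ via Lemma~\ref{L1}. The initial reduction assumes $f\in C_b^2(M)$ so that $LP_Tf$ is classical; the extension to $f\in \B_b(M)$ follows at the end from the smoothing effect of $P_T$ ensured by \eqref{Eq:gradient_formula}. The point of working with $Q_t^{(n)}$ rather than $Q_t$ is that $\{Q_t^{(n)}\}^{-1}$ exists and satisfies the moment bound \eqref{QNT}, so the standard Bismut/Malliavin integration-by-parts apparatus proceeds without the boundary singularity of $Q_t$.

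At the regularized level I would decompose $L=\Delta+Z$. The $\Delta$-contribution is obtained by a second-order Bismut calculus: starting from the gradient formula \eqref{Eq:BismutdPtf} applied at each intermediate point $X_s^x$, I perform a second Malliavin integration by parts in the random tangent direction $\int_0^s h_r\{Q_r^{(n)}\}^{-1}\ptr_r\,\vd B_r\in T_xM$. This produces exactly the iterated stochastic integral defining $M_T^{(h,n)}$, up to a boundary remainder proportional to $nP_N Q_s^{(n)}$ which vanishes in $L^2$ by \eqref{QNU'}. The $Z$-contribution arises by representing $\langle Z,\nabla P_{T-s}f\rangle$ along the path via the gradient formula and then integrating by parts in time, using $\tilde h_s'=h_s$ with endpoint values $\tilde h_0=1$, $\tilde h_T=0$; this is what produces the weight $\tilde h_s h_s Z(X_s^x)$ and the prefactor $2$ (reflecting $\partial_T P_T=\frac12 LP_T$). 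To pass to the limit $n\to\infty$, I first truncate $h$ so that \eqref{JQ} also holds for some $\alpha>1$; Lemma~\ref{L1} then delivers a subsequence $n_m$ and a random variable $M_T^h$ for which \eqref{WK} applies with $\eta=f(X_T^x)\in L^\infty\subset L^{2\alpha/(\alpha-1)}(\P)$, giving
$$\E\!\left[f(X_T^x)\, M_T^{(h,n_m)}\right]\longrightarrow \E\!\left[f(X_T^x)\, M_T^h\right].$$
The truncation of $h$ is then removed using the uniform $L^1$ bound \eqref{MES}.

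The estimate \eqref{BL0} is an immediate consequence of the identity \eqref{BSL}: the contribution from $M_T^h$ is controlled by the final ($\alpha=1$) assertion of Lemma~\ref{L1}, giving the prefactor $3\bigl(3+\sqrt{10}\bigr)(C(h)\,\E\int_0^T h_s^2\,\vd s)^{1/2}$, while the $Z$-martingale is bounded by the Burkholder--Davis--Gundy inequality with constant $3$, contributing $3\,\E[(\int_0^T|\tilde h_sh_sZ(X_s^x)|^2\,\vd s)^{1/2}]$. The principal obstacle throughout is precisely the singularity of $Q_t$ on $\partial M$: it precludes a direct definition of $Q_t^{-1}$ and of the iterated stochastic integral inside $M_T^h$, forcing the indirect approach via $Q_t^{(n)}$, and Lemma~\ref{L1}'s weak-convergence mechanism is the linchpin that substitutes for the simple BDG/dominated-convergence argument available in the boundaryless setting of \cite{Th19}.
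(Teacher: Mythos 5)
Your overall architecture coincides with the paper's: regularize with $Q^{(n)}_t$ so that $\{Q^{(n)}_t\}^{-1}$ exists, derive the identity at level $n$ with a boundary remainder, remove the remainder as $n\to\infty$, pass to the weak limit through Lemma \ref{L1} with $\eta=f(X_T^x)$, and produce the $Z$-term and the factor $2$ by the time integration by parts built on $\tilde h$. (The paper implements the second-order step not as a second Malliavin integration by parts starting from \eqref{Eq:BismutdPtf} --- which is anyway only available under condition {\bf (A)} and boundedness of $\nabla P_{\cdot}f$, not assumed here --- but as an It\^o/Weitzenb\"ock computation of $LP_{T-t}f(X_t)\tilde h_t^2$ using $-Lg=\bd^*\bd g-(\bd g)(Z)$; this difference is cosmetic.) However, two of your steps fail as written. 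First, a boundary remainder ``proportional to $nP_NQ_s^{(n)}$'' cannot be killed by \eqref{QNU'}: that estimate only gives $\int_0^t|P_NQ_s^{(n)}|^2\,\d l_s=O(1/n)$, so an $n$-weighted remainder is of size $n\cdot O(n^{-1/2})$ in $L^2$ and need not vanish. The correct mechanism is that the $n$-term coming from \eqref{QN} appears paired with $\bd P_{T-t}f$ in the normal direction and is annihilated identically by the Neumann condition $NP_{T-t}f|_{\pp M}=0$; what survives is the term $\Hess_{P_{T-s}f}(N_s,N_s)\langle N_s,\cdot\rangle\,\d l_s$ carrying $(Q_s^{(n)})^*N_s$ and no factor $n$, and only this is sent to $0$ via \eqref{QNU'}, after localizing by $\tau_k$ so that $\Hess_{P_{T-s}f}$ is bounded (see \eqref{HessPf-NN}--\eqref{NN}); your sketch omits both the cancellation and the localization. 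Second, truncating $h$ to gain \eqref{JQ} with $\alpha>1$ destroys the normalization $\int_0^T h_s\,\d s=-1$, hence $\tilde h_T=0$, which is precisely what eliminates the endpoint terms $\tilde h_T^2\,LP_{T-T}f(X_T)$ and $\bd P_{T-T}f(Q_T^{(n)}\tilde h_T\cdots)$; moreover $M_T^{(h,n)}$ is quadratic in $h$, so ``removing the truncation by \eqref{MES}'' is not a linearity argument and the cross terms are left uncontrolled.

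There are also gaps at the two ends of the argument. Your reduction class $C_b^2(M)$ is too weak: without $Nf|_{\pp M}=0$ and $\|Lf\|_\infty<\infty$ one loses $LP_{T-t}f=P_{T-t}Lf$, and $LP_{T-t}f(X_{t\wedge\tau_k})$ need not remain bounded as $t\uparrow T$, so the limits $k\to\infty$ in the localized expectation identity are not justified; the paper works in $\mathcal{C}_N^\infty(L)$ exactly for this reason. Likewise, the passage to $f\in\B_b(M)$ is not a one-line consequence of the ``smoothing effect'' of \eqref{Eq:gradient_formula}: replacing $f$ by $P_\eps f$ would require $LP_\eps f\in\B_b(M)$ and an interchange of limits not available under \eqref{CO} alone, and the paper instead runs a signed-measure argument on $\mathcal{C}_N^\infty(L)$ combined with the identity $LP_tf=\frac{\d}{\d t}P_tf$ for bounded measurable $f$, see \eqref{BC''}. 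Your derivation of \eqref{BL0} from \eqref{BSL} via the $\alpha=1$ case of Lemma \ref{L1} and a first-moment BDG/Lenglart bound for the $Z$-martingale is fine.
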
  
%

\begin{proof} (1)
  We first assume 
  $f\in \mathcal{C}_{N}^{\infty}(L)$, the class of functions $f\in C^\infty(M)$ such that $Nf|_{\pp M}=0$ and  $\|Lf\|_\infty<\infty.$  
    In this case, we have the Kolmogorov  equations  (see \cite[Theorem 3.1.3]{Wbook14}),
\beq\label{BC} \pp_t P_{T-t} f = - L P_{T-t} f= - P_{T-t} Lf,\ \ N P_tf|_{\pp M}=0,\ \ t\in [0,T].\end{equation}  
In the sequel, we write for simplicity
 $$X_t=X_t^x, \ \ N_t=N(X_t),\ \   Z_t=Z(X_t),\ \    M_t:=LP_{T-t}f(X_t),\quad t\in [0, T].$$
Furthermore, we write $A_t \mequal B_t$ for two processes  $A_t$ and $B_t$ if the difference $A_t-B_t$ is a local martingale.  
  By It\^{o}'s formula and  \eqref{BC},   we obtain 
  \begin{align*}
    \vd M_t&=\langle \nabla (LP_{T-t}f)(X_t), \ptr_t\, \vd B_t \rangle +\partial_t(LP_{T-t}f)(X_t)\, \vd t+\frac12 L(LP_{T-t}f)(X_t)\, \vd t+\frac12 N(LP_{T-t}f)(X_t)\, \vd l_t\\
           &=\langle \nabla (LP_{T-t}f)(X_t), \ptr_t\, \vd B_t \rangle+\frac12 N(P_{T-t}Lf)(X_t)\, \vd l_t
           =\langle \nabla (LP_{T-t}f)(X_t), \ptr_t\, \vd B_t\rangle,\ \ \ t\in [0,T].
  \end{align*}
Then 
$$\d (M_t \tt h_t^2)=\tt h_t^2 \,\d M_t+ 2 \tt h_t (\tt h_t)' M_t\,\d t=\tt h_t^2\,\d M_t+ 2 \tt h_t h_t M_t\,\d t,$$
 which together with $\tt h_0=1$ implies 
 \begin{align}\label{L-eq1}
   (LP_{T-t}f)(X_t)\,\tt h_t^2- LP_Tf(x) \mequal 2\int_0^tLP_{T-s}f(X_s)\tt h_sh_s\vd s. 
  \end{align} 
With $\Delta=-\bd^*\bd$ and $L=\Delta+Z$, we have 
   \begin{align}
   - (LP_{T-t}f)(X_t) 
    =\l\{ \bd^*(\bd P_{T-t}f)-(\bd P_{T-t}f)(Z)\r\}(X_t).\label{L-eq2}
  \end{align}
 Combined with \eqref{L-eq1} this further yields
  \begin{align}\label{LPf-formula1}
    ( L P_{T-t}f)(X_t) \tt h_t^2-LP_{T}f(x)
 & \overset{\rm m}{=} 2\int_0^t LP_{T-s}f (X_s) \tt h_s h_s\, \vd s \notag \\
 &=-2 \int_0^t \bd^* (\bd P_{T-s}f) \tt h_s h_s\, \vd s +2 \int_0^t (\bd P_{T-s}f)(Z) \tt h_s h_s\, \vd s.
  \end{align}
  
    Let $Q_t^{(n)*}\colon T_{X_t}M\to T_xM $ be the adjoint operator to $Q_t^{(n)}$.
 By It\^o's formula and the Weitzenb\"{o}ck formula, we obtain 
  \begin{align*}
    \vd \l\{(\bd P_{T-t}f)(X_t)Q_t^{(n)}\r\}=\big(\nabla_{\ptr_t\vd B_t}\bd P_{T-t}f(X_t)\big)(Q^{(n)}_t)
    +\l\{\Hess_{P_{T-t}f}(N_t,N_t)\,Q_t^{(n)*} N_t\r\}(X_t) \, \vd l_t.
  \end{align*}
  Combining this with 
 $$ \bd ^*(\bd P_{T-t}f)\tt h_th_t\,\vd t
    =-\l\{(\nabla_{\ptr_t\, \vd B_t}\bd P_{T-t}f)(Q^{(n)}_t\tt h_th_t(Q^{(n)}_t)^{-1}\ptr_t\,\vd B_t)\r\}(X_t),$$
and using It\^o's formula, we derive    
  \begin{align}
    \int_0^t\bd ^*(\bd P_{T-s}f)\tt h_sh_s\,\vd s
    &\mequal- \big(\bd P_{T-t}f\big)\bigg(Q^{(n)}_t\tt h_t\int_0^th_s(Q^{(n)}_s)^{-1}\ptr_s\,\vd B_s\bigg) \notag\\
    &\quad+\int_0^t\Hess _{P_{T-s}f}(N_s,N_s)\Big\<N_s,\   \tt h_sQ^{(n)}_{s}\int_0^sh_r(Q^{(n)}_{r})^{-1} \ptr_r\,\vd B_r\Big\>\,\vd l_s \notag\\
    &\quad +\int_0^t (\bd P_{T-s}f)\Big(h_s Q^{(n)}_{s} \int_0^s h_r (Q^{(n)}_{r})^{-1}\ptr_r\, \vd B_r\Big) \, \vd s. \label{L-formula2}
  \end{align}
  To deal with the last term of the above equation and the second term on the right-hand side of  \eqref{LPf-formula1}, we observe that 
   by It\^o's formula 
  $$\d P_{T-s}f(X_s)= \<\nn P_{T-s}f(X_s), \ \ptr_s\, \vd B_s\>= (\bd  P_{T-s}f)(\ptr_s\, \vd B_s),$$ so that
  \begin{align*}
    &\int_0^t\big(\bd P_{T-s}f\big)(Z_s)\tt h_sh_s\, \vd s \mequal P_{T-t}f(X_t)
      \int_0^t\langle \tt h_s h_sZ_s,\ \ptr_s\vd B_s\rangle,\\
    & \int_0^t (\bd P_{T-s}f)\Big( Q_s^{(n)}h_s \int_0^s h_r \{Q^{(n)}_{r}\}^{-1}\ptr_r\, \vd B_r\Big) \, \vd s \mequal P_{T-t}f(X_t)\int_0^t\Big\langle h_s Q_s^{(n)}\int_0^s h_r \{Q^{(n)}_{r}\}^{-1}\ptr_r\, \vd B_r, \ptr_s\vd B_s \Big\rangle.
  \end{align*}
  Combining these equations with \eqref{L-formula2} and \eqref{L-eq2},  we obtain
    \begin{align*}
    &\int_0^t(LP_{T-s}f)(X_s)\tt h_sh_s\, \vd s-
      \bd P_{T-t}f\l(Q_t^{(n)}\tilde{h}_t\int_0^th_s(Q_s^{(n)})^{-1}\ptr_s \vd B_s\r)(X_t)\\
    &\quad +\int_0^t \Hess _{P_{T-s}f}(N_s,N_s)\Big\<N_s,\ Q_s^{(n)}\tt h_s\int_0^sh_r(Q_r^{(n)})^{-1}\ptr_r \vd B_r\Big\>\, \vd l_s\\
    &\quad+P_{T-t}f(X_t)\int_0^t\tt h_s\langle h_sZ_s, \ptr_s\vd B_s \rangle\\
    &\quad +P_{T-t}f(X_t)\int_0^t\Big\langle Q_s^{(n)}\int_0^s h_r (Q^{(n)}_{r})^{-1}\ptr_r\, \vd B_r, \ h_s \ptr_s\vd B_s \Big\rangle\mequal 0.
  \end{align*}
The last equation and     Eq.~\eqref{L-eq1} yield 
  \beq  \label{local-mar-2}\beg{split} 
    &(LP_{T-t}f)(X_t)\tt h_t^2- LP_Tf(x) -2\bd P_{T-t}f\l(Q_t^{(n)}\tt h_t\int_0^th_s(Q_s^{(n)})^{-1}\ptr_s \vd B_s\r)\\
    &\quad +2\int_0^t \Hess _{P_{T-s}f}(N_s,N_s)\Big\<N_s,\  Q_s^{(n)}\tt h_s\int_0^sh_r(Q_r^{(n)})^{-1}\ptr_r \vd B_r\Big\>\, \vd l_s\\
    &\quad+2P_{T-t}f(X_t)\int_0^t\tt h_s\langle h_sZ_s, \ptr_s\vd B_s \rangle\\
    &\quad +2P_{T-t}f(X_t)\int_0^t\Big\langle Q_s^{(n)}  \int_0^s h_r (Q^{(n)}_{r})^{-1}\ptr_r\, \vd B_r, \ h_s \ptr_s\vd B_s\Big\rangle\mequal 0.
  \end{split}\end{equation} 
   To get rid of the local martingales in the above calculations, we consider the diffusion process up to exit times from bounded balls.    Since   $\Hess _{ P_{\displaystyle\bf.}f}$ is locally bounded,  for any $k\ge 1$  we find a constant $c_k>0$ such that 
  \begin{align}\label{HessPf-NN}
    &\Bigg|\E\l[\int_0^{T\wedge \tau_k}\Hess _{P_{T-s}f}(N_s,N_s)\,\bigg\<N_s,\ Q_s^{(n)}\tt h_s\int_0^sh_r(Q_r^{(n)})^{-1}\ptr_r \vd B_r\bigg\> \,\d l_s\r]\Bigg| \notag \\
    &\leq c_k  \bigg( \E \int_0^{T\wedge \tau_k}|(Q_s^{(n)})^{*}N_s|^2\, \vd l_s  \bigg)^{1/2}
    \bigg(   \E \int_0^{T\wedge \tau_k}\l|\int_0^sh_r(Q_r^{(n)})^{-1}\ptr_r\vd B_r\r|^2\vd l_s \bigg)^{1/2}.
  \end{align}
 Next, we observe from \eqref{QNU}, \eqref{QNU'} and \eqref{TK} that
  \begin{align*}
    &\lim_{n\rightarrow \infty}
    \E\int_0^{T\wedge \tau_k}|(Q_s^{(n)})^{*}N|^2\, \vd l_s=  \lim_{n\rightarrow \infty}
    \E\int_0^{T\wedge \tau_k}\big|P_N Q_s^{(n)} (Q_s^{(n)})^{*}N\big|^2\, \vd l_s \\
    &\le \lim_{n\rightarrow \infty}\E\bigg[\sup_{t\in [0,T\land\tau_k]} |Q_t^{(n)}|^2 \int_0^{T\land\tau_k} \big|P_N Q_s^{(n)} \big|^2 \,\d s\bigg]=0,
  \end{align*}
  and  by Burkholder-Davis-Gundy's inequality,
  \beg{align*} \E\l[\int_0^{T\wedge \tau_k}\l|\int_0^sh_r(Q_r^{(n)})^{-1}\ptr_r\vd B_r\r|^2\vd l_s\r]\le \E\bigg[l_{T\land \tau_k} \sup_{s\in [0, T\land\tau_k]} \l|\int_0^sh_r(Q_r^{(n)})^{-1}\ptr_r\vd B_r\r|^2\bigg]<\infty.\end{align*}
 Using these estimates when taking the upper bound of \eqref{HessPf-NN},   we arrive at 
  \beq\label{NN}
    \lim_{n\rightarrow \infty}
    \E\l[\int_0^{T\wedge \tau_k}\Hess _{P_{T-s}f}(N_s,N_s)\,\bigg\<N_s,\ Q_s^{(n)}\tt h_s\int_0^sh_r(Q_r^{(n)})^{-1}\ptr_r \vd B_r\bigg\> \,\vd l_s\r]=0.
  \end{equation}

  On the other hand, since the process in \eqref{local-mar-2}
  is a  martingale  up to time  $ T\wedge\tau_k$, its expectation at  time 
  $T \wedge \tau_k$ vanishes, so that 
  \begin{align*}
    &L(P_Tf)(x)-\E\bigg[\tt h_{T\land\tau_k}^2 L P_{T-T\land\tau_k} f(X_{T\land\tau_k})-2\bd P_{T-T\land\tau_k}f\Big(Q_t^{(n)}\tt h_{T\land\tau_k}  \int_0^{T\land\tau_k} h_s(Q_s^{(n)})^{-1}\ptr_s \vd B_s\Big)\bigg]\\
    &=2\E\l[ f(X_{T})\int_0^{T\wedge \tau_k}\Big\langle  Q_{s}^{(n)}\int_0^{s}h_r(Q_{r}^{(n)})^{-1}\ptr_r\vd B_r, h_s\ptr_s\vd B_s \Big\rangle\r]+2\E \l[f(X_{T})\int_0^{T\wedge \tau_k}\langle \tt h_s h_sZ_s,\ptr_s\vd B_s \rangle\r]\\
    &\quad+2\E\l[\int_0^{T\wedge\tau_k}\Hess _{P_{T-s}f}(N_s,N_s)\, \Big\<N_s,\   \tt h_sQ^{(n)}_{s}\int_0^sh_r(Q^{(n)}_{r})^{-1} \ptr_r\,\vd B_r\Big\>\, \vd l_s\r].
  \end{align*}
By $\tt h_T=0$,  Lemma \ref{L1},  \eqref{NN}  and \eqref{CO}, we may first take $k\to\infty$ and then choose a subsequence $n_m\to\infty$ to derive 
    \eqref{BSL} for $f\in C_N^\infty(L)$.\smallskip

(2) To extend the formula to $f\in \mathcal B_b(M)$, we let $h_t=0$ for $t\ge T$ and  define  finite signed measures $\mu_{\vv}$  on $M$ as 
$$\mu_\vv(A):= 2\,\E\l[1_A(X_{T+\vv}^x)\l(M_T^{h}+ \int_0^{T}\bigg\langle \tt h_s  h_sZ(X_s^x),\,  \ptr_s\vd B_s\bigg\rangle\r)\r],\quad \vv\ge 0,$$
for measurable subsets $A\subset M$. By step (1) and \eqref{BC}, we have 
$$\ff{P_{T+\vv}f(x)- P_Tf(x)}\vv =\ff 1 \vv\int_0^\vv LP_{r+T}f(x) \,\d r =\ff 1 \vv\int_0^\vv\,\d r \int_M f\,\d\mu_{r}=\int_Mf\,\d\mu^{(\vv)},\ \ f\in \mathcal{C}_N^\infty(L),\ \vv>0,$$
where $\mu^{(\vv)}:=\ff 1 \vv\int_0^\vv \mu_r\,\d r$ is a finite signed measure on $M$. Since functions in $\mathcal{C}_N^\infty(L)$   determine finite measures, according to  Lemma \ref{L1} and condition \eqref{BC}, $\mu_{\vv}$ is a finite measure, and this implies
(we have in particular $M_{T+r}^{h}= M_T^{h}$ since  $h_t=0$ for $t\ge T$),
\beg{align*} & \ff{P_{T+\vv}f(x)- P_Tf(x)}\vv = \int_Mf\,\d\mu^{(\vv)}\\
&= \ff 1 \vv\int_0^\vv   2\E\l[f(X_{T+r})\bigg(M_{T}^{h}+\int_0^{T}\Big\langle \tt h_s h_sZ_s,\  \ptr_s\vd B_s\Big\rangle\bigg)\r]\,\d r,
\quad f\in \B_b(M),\ \vv>0.\end{align*}
Since the law of $X_T$ is absolutely continuous and $P_r f\to f$ a.e.~as $r\downarrow 0$,
we get by the strong Markov property $\E^{\SF_T}f(X_{T+r})=P_rf(X_T)\to f(X_T)$ a.s.~as
$r\downarrow 0$. 
 By the dominated convergence theorem we may let  $\vv\downarrow 0$ to  arrive at
\beq\label{BC'} \ff{\d P_t f(x)}{\d t}\Big|_{t=T}=  2\E\l[f(X_T)\bigg(M_T^{h}+ \int_0^{T}\Big\langle \tt h_s h_sZ_s,\  \ptr_s\vd B_s\Big\rangle\bigg)\r] ,\ \ f\in \B_b(M).\end{equation}  
On the other hand, for any $f\in \B_b(M)$ and $\vv>0$, $P_{t+\vv}f(x)$ is $C^1$ in $t\ge 0 $ and $C^2$ in $x$ with $NP_{t+\vv}f|_{\pp M}=0$. Hence, by It\^o's formula applied
to the process $(\phi P_T f)(X_t)$ for some cut-off function $\phi$ at $x$, the proof of (3.1.5) in \cite{Wbook14} implies 
\beq\label{BC''} LP_tf(x)=\ff{\d}{\d t } P_tf (x),\ \ t>0, \   f\in \B_b(M).\end{equation} 
 Combining \eqref{BC'} and  \eqref{BC''}, we prove \eqref{BSL} for all $f\in \B_b(M).$
\end{proof}

\begin{remark}
  When reduced to the case without boundary, our estimate still
  improves the result in \cite{Th19}. Moreover, compared to
  the estimate in \cite{Th19}, Theorem \ref{Hessian-th} only uses the lower bound of $\Ric_Z$
  instead of boundedness of $\Ric_Z$.
\end{remark}
Under curvature condition $\bf (A)$, with the particular choice $h_s:=-1/T$ for $s\in [0,T]$,
we obtain

\begin{corollary}\label{cor1}
  Assume that condition {\bf (A)} holds and $\|Z\|_{\infty}<\infty$.  Let $x\in M$ and $T>0$. Then
  \begin{align*}
    |L(P_Tf)|(x)
             &\leq  2\|f\|_{\infty}\l(\frac{\sqrt{3}\|Z\|_{\infty}}{3\sqrt{T}}+\frac{(3+\sqrt{10})\big(\E[\e^{\sigma^-l_{T}}]\big)^{1/2}\e^{K^-T/2}}{T}\r),
  \end{align*}
 for $f\in \mathcal{B}_b(M)$.  If $\sigma\geq 0$, then for $f\in \mathcal{B}_b(M)$,
   \begin{align}\label{convex-case-ineq}
    |L(P_Tf)|(x)
             &\leq (P_Tf^2)^{1/2}(x)\l(\frac{2\sqrt{3}\|Z\|_{\infty}}{3\sqrt{T}}+\frac{\sqrt{2}\e^{K^-T/2}}{T}\r).
  \end{align}
\end{corollary}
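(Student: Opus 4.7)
The plan is to apply Theorem~\ref{Hessian-th} with the constant control $h_s:=-1/T$ on $[0,T]$, for which $\int_0^T h_s\,\vd s=-1$, $\tt h_s=1-s/T$, and $h_s^2=1/T^2$. Under condition~\textbf{(A)} the constants $K,\si$ together with $\E[\e^{\si^- l_T}]<\infty$ and $\|Z\|_\infty<\infty$ make hypothesis~\eqref{CO} immediate. Formula~\eqref{BSL} and the triangle inequality then yield
\begin{align*}
|L(P_Tf)(x)|\le 2\|f\|_\infty\Big(\E|M_T^h|+\E\Big|\int_0^T\<\tt h_sh_sZ(X_s^x),\ptr_s\vd B_s\>\Big|\Big).
\end{align*}

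For the stochastic integral in the bracket, It\^o's isometry together with Jensen gives $\E\bigl|\int_0^T\<\tt h_sh_sZ(X_s^x),\ptr_s\vd B_s\>\bigr|\le\|Z\|_\infty\bigl(\int_0^T(1-s/T)^2/T^2\,\vd s\bigr)^{1/2}=\|Z\|_\infty/\sqrt{3T}$, accounting for the coefficient $\sqrt 3\,\|Z\|_\infty/(3\sqrt T)$ appearing in both estimates. For $\E|M_T^h|$ I would apply Lemma~\ref{L1} with $\aa=1$ (condition~\eqref{JQ} trivially holds as $h$ is deterministic). Monotonicity of $l_s$ and constancy of $K,\si$ yield $C(h)\le T^{-2}\E[\e^{\si^- l_T}]\int_0^T\e^{K^-s}\,\vd s\le T^{-1}\E[\e^{\si^- l_T}]\e^{K^-T}$, while $\E\int_0^T h_s^2\,\vd s=T^{-1}$, so $(C(h)\,\E\int_0^T h_s^2\,\vd s)^{1/2}\le T^{-1}(\E[\e^{\si^- l_T}])^{1/2}\e^{K^-T/2}$. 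Combining these bounds yields the first inequality of the corollary.

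For the refined estimate when $\si\ge 0$ the key improvement is to replace the crude bound $|f(X_T)|\le\|f\|_\infty$ by the Cauchy--Schwarz step $|\E[f(X_T)Y]|\le(P_Tf^2)^{1/2}(x)(\E Y^2)^{1/2}$, and to split $(\E Y^2)^{1/2}$ by Minkowski. The $Z$-integral piece is handled exactly as above. For the remaining $(\E|M_T^h|^2)^{1/2}$ I would argue at the approximating level: by It\^o's isometry, $\E|M_T^{(h,n)}|^2=\E\int_0^T|\xi_s|^2 h_s^2\,\vd s$ where $\xi_s:=Q_s^{(n)}\int_0^s h_r\{Q_r^{(n)}\}^{-1}\ptr_r\vd B_r$ is the process appearing in the proof of Lemma~\ref{L1}. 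Since $\si\ge 0$ makes $\II(\xi,\xi)\,\vd l_s\ge 0$, taking expectation in the It\^o formula for $|\xi_s|^2$ (cf.~\eqref{PM}) gives $(\vd/\vd s)\E|\xi_s|^2\le-K\E|\xi_s|^2+1/T^2$; Gronwall together with $\e^{K(r-s)}\le\e^{K^-s}$ for $r\le s$ produces $\E|\xi_s|^2\le (s/T^2)\e^{K^-s}$, and the elementary inequality $\int_0^T s\,\e^{K^-s}\,\vd s\le\tfrac12 T^2\e^{K^-T}$ (verified by differentiating the difference) yields $\E|M_T^{(h,n)}|^2\le \e^{K^-T}/(2T^2)$, uniformly in $n$.

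The main technical obstacle is to pass this uniform $L^2$-bound to the limit $M_T^h$, since Lemma~\ref{L1} supplies only weak $L^1$-convergence along a subsequence. I would resolve this by combining weak $L^2$-relative compactness of the $L^2$-bounded sequence $\{M_T^{(h,n_m)}\}$ with uniqueness of weak limits (which forces any weak $L^2$-limit to agree with $M_T^h$) and lower semicontinuity of the $L^2$-norm under weak convergence, obtaining $(\E|M_T^h|^2)^{1/2}\le\e^{K^-T/2}/(T\sqrt 2)$. Inserting this together with the $Z$-integral bound produces~\eqref{convex-case-ineq}.
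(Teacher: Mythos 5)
Your proposal follows essentially the same route as the paper: the first bound comes from plugging $h_s=-1/T$ into Theorem \ref{Hessian-th} (the paper simply quotes \eqref{BL0}; note that, like your route through Lemma \ref{L1}, this actually produces the prefactor $3(3+\sqrt{10})$ rather than the stated $(3+\sqrt{10})$, a constant slip inherited from the paper itself and not a flaw of your argument), and the second bound is proved exactly as you propose --- Cauchy--Schwarz against $(P_Tf^2)^{1/2}$, It\^o isometry, using $\sigma\ge 0$ to discard the local-time terms and obtain $\E\big[|M_T^{(h,n)}|^2\big]\le \e^{K^-T}/(2T^2)$ uniformly in $n$, then passing to a weak $L^2$ subsequential limit in the Bismut formula \eqref{BSL}. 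Your explicit identification of the weak $L^2$ limit with the $M_T^h$ appearing in \eqref{BSL} (via a further subsequence of the one fixed in Lemma \ref{L1} and lower semicontinuity of the $L^2$ norm) is a point the paper's proof passes over silently, so this is a slight refinement rather than a deviation.
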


\begin{proof}
The first assertion is a direct consequence of inequality \eqref{BL0}.
It hence suffices to show inequality \eqref{convex-case-ineq}. Note that
\begin{align}\label{est-LPf}
&\left|\E^x\left[f(X_T^x)M_T^{(h,n)}\right]\right| 
\leq (P_Tf^2)^{1/2}(x) \l[\E\big | M_T^{(h,n)} \big |^2\r]^{1/2},
\end{align}
where $M^{(h,n)}$ is defined as in \eqref{MN}.
Let $h_s=-{1}/{T}$. Then,
\begin{align*}
\l[\E\big | M_{T\wedge \tau_k}^{(h,n)} \big |^2\r]^{1/2}&\leq \frac{1}{T^2} \l[  \int_0^{T}\E \bigg|  Q_{s\wedge \tau_k} ^{(n)} \int_0^{s\wedge \tau_k}  \{Q_{r}^{(n)}\}^{-1}\ptr_r \vd B_r\bigg|^2\,  \vd s \r]^{1/2}.
\end{align*}
By It\^{o}'s formula, we see that 
\begin{align}\label{term-L2-est}
\vd \l( \e^{-K^-s}\Big| Q_s^{(n)}\int_0^s(Q_{r}^{(n)})^{-1}\ptr_r\vd B_r\Big|^2\r)\leq 2\e^{-K^-s} \big\langle Q_s^{(n)}\int_0^s\{Q_{r}^{(n)}\}^{-1}\ptr_r\vd B_r,\  \ptr_s\vd B_s \big\rangle+\vd s.
\end{align}
For $0<s\leq \tau_k$, this implies that 
\begin{align*}
\E \l[ \e^{-K^-{s\wedge \tau_k}}\Big| Q^{(n)}_{s\wedge \tau_k}\int_0^{s\wedge \tau_k}\{Q_{r}^{(n)}\}^{-1}\ptr_r\vd B_r\Big|^2\r]\leq s.
\end{align*}
Letting $k$ tend to $\infty$ yields 
\begin{align*}
\E \l[ \Big| Q^{(n)}_{s\wedge \tau_k}\int_0^{s\wedge \tau_k}\{Q_{r}^{(n)}\}^{-1}\ptr_r\vd B_r\Big|^2\r]\leq s\e^{K^-{s}}.
\end{align*}
Combining this with \eqref{term-L2-est} and \eqref{est-LPf}, we see that  $\{M_T^{(h,n)}\}_{n\geq 1}$ is  bounded in $L^2(\P)$, and thus obtain a subsequence converging weakly to a random variable $M_T^{h}$ in $L^2(\P)$ and satisfying
\begin{align*}
\E\l[|M_T^{h}|^2\r]\leq \frac{\e^{K^-T}}{2T^2}.
\end{align*}
By this and the Bismut formula \eqref{BSL},  the second assertion \eqref{convex-case-ineq} holds.
\end{proof}

\section{Hessian formula for $P_tf$ and its application}\label{HessPf}

To state the main result of this Section, we first introduce some curvature conditions. 
For $x\in M$ and $v_1\in T_x M$, let $\Ric_Z^{\sharp}(v_1)\in T_xM$ be given by
$$  \langle \Ric_Z^{\sharp}(v_1), v_2  \rangle:= \Ric_Z(v_1,v_2)= \Ric(v_1,v_2)-\langle \nn_{v_1}Z, v_2\rangle,\quad v_2\in T_x M. $$
Let $R$ denote the Riemann curvature tensor.  Then $\bd^*R(v_1)$ is the linear operator on $T_xM$   determined by
\begin{align*}
  \langle \bd^*R(v_1,v_2),v_3\rangle  
  =\langle(\nabla_{v_3}\Ric^{\sharp})(v_1),v_2\rangle
  -\langle(\nabla_{v_2}\Ric^{\sharp})(v_3),v_1\rangle,\ \ v_3\in T_xM,
\end{align*}
where we write $\bd^*R(v_1,v_2)\equiv\bd^*R(v_1)v_2$.
Moreover, let    $R(v_1): T_xM\otimes T_xM\to T_xM$  be  given by
$$\langle R(v_1)(v_2,v_3), v_4\rangle :=\langle R(v_1,v_2)v_3, v_4\rangle,\ \ v_2, v_3, v_4\in T_xM.$$ Finally, let $|\cdot|$ be the operator  norm on  tensors, and $\|\cdot\|_\infty$ be the uniform norm of $|\cdot|$ over $M$.   

Assume that there exist two functions $K\in C(M)$ and
$\sigma \in C(\partial M)$ such that
\begin{align}\label{CD-1}\Ric_Z:=\Ric-\nabla Z\geq K,\quad  \  -\nabla N \geq \sigma,\end{align}
where the second condition means  $-\langle \nabla _X N, X \rangle \geq \sigma(x) |X|^2$ for $x\in \partial M$ and $X\in T_xM$.
Moreover,  assume that there exist three non-negative functions $\alpha, \, \beta$  and $\gamma$, such that
\begin{align}\label{CD-2}
|R|_{\HS} (x)\leq \alpha(x), \quad |{\bf d} ^*R+\nabla \Ric_Z^{\sharp}-R(Z)|(x)<\beta(x) ,\quad |\nabla (\nabla N)^{\sharp}+R(N)|(x)<\gamma(x),
\end{align}
 where for $x\in M$ and $v_1,v_2 \in T_xM$, 
\begin{align*}
|R|_{\HS}(x)=\sup\l\{|R^{\sharp, \sharp}(v_1,v_2)|_{\HS}(x): v_1,v_2\in T_xM,\, |v_1|\leq 1,\, |v_2|\leq  1\r\}.
\end{align*}

To establish the Hessian formula for  $P_tf$, we first introduce an operator $\tilde{Q}_t:T_xM\rightarrow T_{X_t}M$ defined by
\begin{align}\label{tilde-Q}
  {\rm D}\tilde{Q}_t=-\frac12\Ric_Z^{\sharp}(\tilde{Q}_t)\, \vd t+\frac12 (\nabla N)^ {\sharp}(\tilde{Q}_t)\, \vd l_t, \quad  \tilde{Q}_0=\id.
\end{align}
Then let the operator-valued process $W^{\tilde{h}}_t\colon T_xM\otimes T_xM\rightarrow T_{X_t(x)}M$ be defined as
solutions to the following covariant It\^{o} equation
\begin{align*}
  {\rm D}W^{\tilde{h}}_t(v, v)&=R(\ptr_t\vd B_t, \tilde{Q}_t(\tilde{h}(t)v))\tilde{Q}_t(v)\\
            &\quad-\frac12 (\bd^*R-R(Z)+\nabla \Ric_Z)^{\sharp}(\tilde{Q}_t(\tilde{h}(t)v), \tilde{Q}_t(v))\,\vd t\\
            &\quad-\frac12 (\nabla^2N-R(N))^{\sharp}(\tilde{Q}_t(\tilde{h}(t)v), \tilde{Q}_t(v))\,\vd l_t\\
            &\quad-\frac12 \Ric_Z^{\sharp}(W_t^{\tilde{h}}(v, v))\,\vd t+\frac12 (\nabla N)^{\sharp}(W_t^{\tilde{h}}(v, v))\,\vd l_t
\end{align*}
with initial condition $W_0^{\tilde{h}}(v,v)=0$.

\begin{theorem}\label{local-them3}
  Let $D$ be an open relatively compact subset of $M$,  $T>0$ and $x\in D$.
Suppose that \eqref{CD-1} and \eqref{CD-2} hold. Let $h(\cdot)$  
be an adapted and bounded real process such that $\int_0^t h_s\, \vd s=-1$ for $t\geq T\wedge \tau_D$, and such that
\begin{align}\label{CD-h}
&\E \Bigg[ \int_0^{T\wedge \tau_D} \left(h^2_s+\tilde{h}^2_s  (\alpha^2 (X_s)+\beta^2(X_s))\right)\e^{\int_0^sK^-(X_r)\, \vd r+ \int_0^s \sigma^-(X_r)\, \vd  l_r} \, \vd s \notag \\
&\qquad \qquad\qquad    +\int_0^{T\wedge \tau_D}\tilde{h}^2_s \gamma^2(X_s)\e^{\int_0^sK^-(X_r)\, \vd r+ \int_0^s \sigma^-(X_r)\, \vd  l_r} \, \vd l_s \Bigg] <\infty
\end{align}
where $\tilde{h}(t)=1+\int_0^th_s\, \vd s$.

 Then for
  $f\in \mathcal{B}_b(M)$ and $v\in T_xM$,
  \begin{align*}
    \Hess_{ P_{T}f} (v, v)(x)=
    &- \E^x \l[f(X_T)\int_0^T\langle W_s^{\tilde{h}}(v, h_s v),\ptr_s\vd B_s \rangle \r] \notag\\
    &+ \E^x \l[ f(X_T)\l(\l(\int_0^T\langle \tilde{Q}_s(h_sv),
      \ptr_s \vd  B_s \rangle\r)^2-\int_0^T |\tilde{Q}_s(h_s v)|^2  \,\vd s\r) \r].
  \end{align*}
Moreover, 
\begin{align}\label{Hess_P_Tf_Estimate}
| \Hess_{ P_{T}f } | & \leq 3 \|f\|_{\infty} C(h)^{1/2} \Bigg\{ (3+\sqrt{10})\l[\E\l(\int_0^{T\wedge \tau_D} \alpha^2(X_s)\e^{\int_0^sK^-(X_r)\, \vd r+ \int_0^s \sigma^-(X_r)\, \vd  l_r}\tilde{h}_s^2\, \vd s\r)\r]^{1/2}\notag\\
&\qquad + \frac{1}{2}\l[\E\l(\int_0^{T\wedge \tau_D}\beta^2(X_s) \e^{\int_0^sK^-(X_r)\, \vd r+ \int_0^s \sigma^-(X_r)\, \vd  l_r} \tilde{h}_s^2\, \vd s\r)\r]^{1/2} \notag\\
&\qquad + \frac{1}{2}\l[\E\l(\int_0^{T\wedge \tau_D}\gamma^2(X_s) \e^{\int_0^sK^-(X_r)\, \vd r+ \int_0^s \sigma^-(X_r)\, \vd  l_r}\tilde{h}_s^2 \, \vd l_s\r)\r]^{1/2}+\frac{2}{3}C(h)^{1/2}\Bigg\},
\end{align}
where $C(h)$ is defined as in \eqref{def-Ch}.  
\end{theorem}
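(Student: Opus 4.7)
The plan is to follow the two-stage template of Theorem \ref{Hessian-th}: first establish the formula for $f\in\mathcal{C}_{N}^{\infty}(L)$ by iterated It\^o calculus along $X_t$, then extend to $f\in\B_b(M)$ by the semigroup-mollification argument of step (2) in that proof (apply the formula to $P_\vv f$ and let $\vv\downarrow 0$ using $P_{T+\vv}=P_T P_\vv$ and dominated convergence). For smooth $f$, the crucial observation is that the functional $\tilde Q_t$ from \eqref{tilde-Q} is calibrated so that $(\nabla P_{T-t}f)(\tilde Q_tv)$ is a local martingale on $[0,T)$: the Weitzenb\"ock commutation $(\pp_t+\tfrac12 L)\nabla P_{T-t}f=\tfrac12\Ric_Z^\sharp\nabla P_{T-t}f$ exactly cancels the $-\tfrac12\Ric_Z^\sharp(\tilde Q_t)\,\vd t$ drift in \eqref{tilde-Q}, while tangential differentiation of $NP_{T-t}f|_{\pp M}=0$ cancels the $+\tfrac12(\nabla N)^\sharp(\tilde Q_t)\,\vd l_t$ drift by symmetry of $\Hess_{P_{T-t}f}$. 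In particular $\nabla P_Tf(v)=\E[\langle\nabla f(X_T),\tilde Q_Tv\rangle]$, and unlike $Q_t$ the operator $\tilde Q_t$ remains non-singular at the boundary.

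To pass from gradient to Hessian, I would next apply It\^o's formula to the product $(\nabla P_{T-t}f)(\tilde Q_tv)\cdot\int_0^t\langle\tilde Q_s(h_sv),\ptr_s\vd B_s\rangle$. The ${\rm D}$-differential of the martingale part of $(\nabla P_{T-t}f)(\tilde Q_tv)$ is proportional to $\Hess_{P_{T-t}f}(\ptr_t\vd B_t,\tilde Q_tv)$, so the cross-bracket of the product gives $\Hess_{P_{T-t}f}(\tilde Q_t(h_tv),\tilde Q_tv)\,\vd t$; coupling this with a second It\^o expansion of $\Hess_{P_{T-t}f}(\tilde Q_tv,\tilde Q_tv)\tilde h_t^2$, and using the second Bianchi identity together with the double Weitzenb\"ock formula, produces exactly the $R$-noise term $R(\ptr_t\vd B_t,\tilde Q_t(\tilde h_tv))\tilde Q_t(v)$, the interior drift $-\tfrac12(\bd^*R-R(Z)+\nabla\Ric_Z)^\sharp(\tilde Q_t(\tilde h_tv),\tilde Q_t(v))\,\vd t$ and the boundary drift $-\tfrac12(\nabla^2N-R(N))^\sharp(\tilde Q_t(\tilde h_tv),\tilde Q_t(v))\,\vd l_t$ which are the sources defining $W_t^{\tilde h}$. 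Exploiting $\tilde h_{T\wedge\tau_D}=0$ then converts the initial value $\Hess_{P_Tf}(v,v)(x)$ into the claimed sum of a $W^{\tilde h}$-stochastic integral and the iterated Malliavin-type integral $\bigl(\int_0^T\langle\tilde Q_s(h_sv),\ptr_s\vd B_s\rangle\bigr)^2-\int_0^T|\tilde Q_s(h_sv)|^2\,\vd s$, both multiplied by $f(X_T)$. Relative compactness of $D$ together with condition \eqref{CD-h} upgrades each local martingale produced along the way to a genuine $L^1$-martingale, so expectation at $T\wedge\tau_D$ may be taken with no residual boundary contribution.

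The estimate \eqref{Hess_P_Tf_Estimate} follows by bounding each of the four pieces of the formula separately via the Burkholder-Davis-Gundy inequality combined with Lenglart's inequality, in exact analogy to Lemma \ref{L1}. The three stochastic integrals that constitute $W_t^{\tilde h}$ respectively yield the $\alpha$-, $\beta$- and $\gamma$-contributions of \eqref{Hess_P_Tf_Estimate}, each weighted by the exponential factor $|\tilde Q_s|\leq\e^{-\int_0^sK(X_r)/2\,\vd r-\int_0^s\sigma(X_r)/2\,\vd l_r}$ coming from \eqref{CD-1}, while the square-minus-bracket term supplies the $\tfrac23 C(h)^{1/2}$ constant via the same $\delta$-optimization carried out in the proof of Lemma \ref{L1}. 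The main obstacle is bookkeeping the boundary local-time terms: unlike the gradient case where $P_NQ_t=0$ automatically annihilates any $\vd l_t$ pairing, $\Hess_{P_{T-t}f}$ satisfies no analogous vanishing on $\pp M$, so the $+\tfrac12(\nabla N)^\sharp\,\vd l_t$ correction in \eqref{tilde-Q} is forced in order to restore a well-behaved auxiliary parallel transport, yet the second-order expansion still produces the irreducible residual $\nabla(\nabla N)^\sharp+R(N)$ drift in $\vd l_t$, whose absorption into $W_t^{\tilde h}$ and subsequent $L^1$-control under only the integrability \eqref{CD-h} is the most delicate step of the whole argument.
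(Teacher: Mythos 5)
Your outline reproduces the paper's core mechanism and in that sense is on target: the same functionals $\tilde{Q}_t$ and $W^{\tilde h}_t$, the same It\^o--Weitzenb\"ock computation showing that $\bd P_{T-t}f(\tilde{Q}_t(v))$ and $\Hess_{P_{T-t}f}(\tilde{Q}_t\cdot,\tilde{Q}_t\cdot)+\bd P_{T-t}f(W_t(\cdot,\cdot))$ are local martingales (this is exactly Lemma \ref{lem3}), the use of $\tilde h_{T\wedge\tau_D}=0$ to kill the derivative terms at the terminal time, and Lenglart/BDG-type bounds as in Lemma \ref{lem4} for \eqref{Hess_P_Tf_Estimate}. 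Where you genuinely deviate is the passage to $f\in\B_b(M)$: you propose the two-stage scheme of Theorem \ref{Hessian-th} (prove the identity for $f\in\mathcal{C}_N^{\infty}(L)$, then replace $f$ by $P_\vv f$ and let $\vv\downarrow0$), whereas the paper's proof of Theorem \ref{local-them3} never restricts to smooth $f$: it chooses $h^\eps$ with $h^\eps_s=0$ for $s\ge (T-\eps)\wedge\tau_k$, exploits that $P_{T-t}f$, $\bd P_{T-t}f$ and $\Hess_{P_{T-t}f}$ are automatically bounded on $[\eps,T]\times B_k$ for merely bounded $f$, evaluates the martingale of Lemma \ref{lem3} strictly before time $T$, recovers $f(X_T)$ by the Markov property, and only then lets $\eps\downarrow0$ and $k\to\infty$ using Lemma \ref{lem4} and Fatou. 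The paper's route buys two things your plan must supply separately: (i) Lemma \ref{lem3} gives a true martingale only on $[0,T\wedge\tau_D)$, so ``taking expectation at $T\wedge\tau_D$'' in your smooth-$f$ stage requires $C^2$-regularity of $P_sf$ up to $s=0$ with the Neumann compatibility condition, which the $h^\eps$-truncation avoids altogether; (ii) in the mollification step you must justify both $\Hess_{P_{T+\vv}f}\to\Hess_{P_Tf}$ and the convergence of the right-hand side as $\vv\downarrow0$, as in step (2) of Theorem \ref{Hessian-th}. Your route is viable (the paper itself uses precisely this mollification for the global Theorem \ref{th4}), but it is extra work for the local statement.

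Two small corrections to your accounting of the estimate. The $\tfrac23 C(h)^{1/2}$ term does not come from the $\delta$-optimization of Lemma \ref{L1} (that optimization is what produces the $(3+\sqrt{10})$ factor in the $\alpha$-term); it is the elementary It\^o-isometry bound $\E\bigl|\bigl(\int_0^T\langle \tilde{Q}_s(h_sv),\ptr_s\vd B_s\rangle\bigr)^2-\int_0^T|\tilde{Q}_s(h_sv)|^2\,\vd s\bigr|\le 2\,C(h)$. Likewise, in Lemma \ref{lem4} only the curvature-noise term $\xi^{(1)}$ is handled by Lenglart plus optimization; the $\beta$- and $\gamma$-contributions come from pathwise Gronwall-type bounds on $\xi^{(2)}$ and $\xi^{(3)}$, not from BDG. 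These are bookkeeping points and do not affect the validity of your overall plan.
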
 

\begin{remarks}\label{rem1}
1) \ The original idea of the proof for the Hessian formula comes from Elworthy-Li~\cite{EL94} and Thalmaier \cite{APT2003}. 
Our form of the formula is consistent  with \cite{CCT} with the choice of one random test function $h$ only.  The main difficulty here is to deal with
the impact of the boundary  and to weaken the conditions on the curvature and the process $h$. Theorem \ref{local-them3} also improves the results in \cite{APT2003, CCT} and gives a new estimates even when the boundary is empty. 

2) \ Let $D$ be an open relatively compact subset of $M$. Assume that $h$ is an adapted and non-positive process with $h_s=0$ for $s\geq T\wedge \tau_D$ and $\int_0^T h_s\, \vd s =-1$, which imply $\tilde{h}_s=0$ for $s\geq T\wedge \tau_D$.  Then the functions $K$, $\sigma$, $\alpha$, $\beta$ and $\gamma$ are all bounded on $D$ and $|\tt h _s |\leq 1$.  Moreover, condition \eqref{CD-h} can be simplified to 
\begin{align*}
\E\l[\int_0^T h_s^{2}\e^{ \int_0^s \sigma^-(X_r)\, \vd  l_r}\, \vd s\r]<\infty.
\end{align*}
The corresponding result is then the local version of the Hessian formula for the heat semigroup.

3) \ As $-\langle \nabla_N N, N \rangle=0$,
we know that  $-\nabla N \geq \sigma$ implies  $\sigma \leq 0$.

4) \ Assume $\II
\geq \sigma_1$ and $|\nabla_N N|\leq \sigma_2$. Then  for $X\in T_xM$ and $x\in \partial M$, 
$X=X_1+X_2$ such that  $X_1\in T_x \partial M$ and $X_2=\langle X, N \rangle N$,
\begin{align*}
-\langle \nabla _XN, X \rangle &=- \langle \nabla _{X_1} N, X_1  \rangle - \langle X, N\rangle^2 \langle \nabla _N N, N \rangle -  \langle X, N\rangle \langle \nabla _{X_1} N,  N \rangle -\langle X, N\rangle \langle \nabla _{N} N,  X_1 \rangle \\
&=-\langle \nabla _{X_1} N, X_1  \rangle - \langle X, N\rangle \langle \nabla _{N} N,  X_1 \rangle\\
&\geq \sigma_1 |X_1|^2 -\sigma_2 |X_1|\cdot |\langle X, N\rangle|\geq \min\l\{\sigma_1,\, -\frac{\sigma_2}{2}\r\} |X|^2.
\end{align*}
In particular, if $\nabla_N N=0$ and $\II\geq  \sigma$, then it is easy to see that for $x\in \partial M$,
$$-\langle\nabla_X N, X\rangle(x)\geq -\sigma^- |X|^2,\ \ \ \ \mbox{for}\ \ X\in T_xM \  \mbox{and} \ x\in \partial M.$$

5) \ Naturally, one might try to work with $Q_t$ instead of $\tilde{Q}_t$ to define $M_t(v,v)$, in order to avoid the term $\nabla N$. But we have already seen that
$Q_t$ is the limit of $Q^{(n)}_t$, see the proof of \cite[Theorem 3.2.1]{Wbook14}, which satisfy the covariant It\^{o} equation \eqref{QN}.
We have
\begin{align}\label{reason-1}
& \vd \l( \nabla \bd P_{T-t}f(Q_t^{(n)}(v), Q_t^{(n)}(v))(X_t)\r)\notag\\
&= \left(\nabla_{\ptr_t\,\vd B_t}\nabla \bd P_{T-t}f\right)\left(Q_t^{(n)}(v)
,Q_t^{(n)}(v)\right)- (\nabla \bd P_{T-t}f)\l(\Ric_Z^{\sharp} (Q_t^{(n)}(v)), Q_t^{(n)}(v)\r)\,\vd t\notag\\
&\quad
- (\nabla \bd P_{T-t}f)\l(Q_t^{(n)}(v), \II^{\sharp} (Q_t^{(n)}(v))\r)\,\vd l_t+\partial_t(\nabla \bd P_{T-t}f)(Q_t^{(n)}(v), Q_t^{(n)}(v))\, \vd t\notag\\
&\quad+\frac12 \nabla_N(\nabla \bd P_{T-t}f)
(Q_t^{(n)}(v), Q_t^{(n)}(v))\,\vd l_t
+\frac12 (\tr\nabla^2+\nabla_Z)(\nabla \bd P_{T-t}f)
(Q_t^{(n)}(v), Q_t^{(n)}(v))\,\vd t\notag\\
&\quad-n\langle Q_t^{(n)}(v)), N(X_t) \rangle \nabla \bd P_{T-t}f( N(X_t), Q_t^{(n)}(v))\,\vd l_t\notag\\
&\mequal -n\langle Q_t^{(n)}(v)), N(X_t) \rangle \, \nabla \bd P_{T-t}f(Q_t^{(n)}(v),N)\,\vd l_t\notag\\
&\quad -\langle Q_t^{(n)}(v)), N(X_t) \rangle \, \nabla \bd P_{T-t}f( \nabla _N N, Q_t^{(n)}(v) )\,\vd l_t\notag\\
&\quad+\frac12 (\bd P_{T-t}f)( (\nabla (\nabla N)^{\sharp}+R(N))(Q^{(n)}_t(v),Q^{(n)}_t(v)))\, \vd l_t\notag\\
&\quad+\frac12 (\bd P_{T-t}f) \big((\bd^*R-R(Z)+\nabla \Ric_Z^{\sharp})(Q^{(n)}_t(v),Q^{(n)}_t(v))\big)\,\vd t \notag \\
&\quad -\nabla\bd P_{T-t}f(R^{\sharp, \sharp}(Q^{(n)}_t(v), Q^{(n)}_t(v)))\, \vd t.
\end{align}
The main difficulty is to clarify the limit of
$$n\langle Q_t^{(n)}(v)), N(X_t) \rangle \, \nabla \bd P_{T-t}f(Q_t^{(n)}(v),N), $$
as $n$ tends to $\infty$.
In addition, information on $\nabla_N N$ is required to deal with the term $$\langle Q_t^{(n)}(v)), N(X_t) \rangle \, \nabla \bd P_{T-t}f( \nabla _N N, Q_t^{(n)}(v) )\,\vd l_t.$$
To this end, we even need information concerning $\nabla N$ on the full vector bundle of the boundary
if we use $Q_t$ in the definition of $M_t(v,v)$ in the above proof, and then it is still non-trivial to check the martingale property. In this respect, working with the functional $\tilde{Q}_t$ instead of $Q_t$  not only  simplifies the calculation, it also doesn't require additional conditions.
\end{remarks}

To prove Theorem \ref{local-them3}, we need the following two lemmata.

\begin{lemma}\label{lem4}
Keeping the assumptions as in Theorem \ref{local-them3},  we have
\begin{align*}
& \E\l[\sup_{t\in [0,T]} \Big | \int_0^t h_s \langle W_s^{\tilde{h}}(v,v), \, \ptr_s \vd B_s  \rangle \Big| \r]  \\
&\leq  3C(h)^{1/2} \Bigg\{ (3+\sqrt{10})\l(\E\int_0^T \alpha^2(X_s)\e^{\int_0^sK^-(X_r)\, \vd r+ \int_0^s \sigma^-(X_r)\, \vd  l_r}\tilde{h}_s^2\, \vd s\r)^{1/2}\\
&\qquad + \frac{1}{2}\l(\E\int_0^T\beta^2(X_s) \e^{\int_0^sK^-(X_r)\, \vd r+ \int_0^s \sigma^-(X_r)\, \vd  r}\tilde{h}_s^2 \, \vd s\r)^{1/2} \\
&\qquad + \frac{1}{2}\l(\E\int_0^T\gamma^2(X_s) \e^{\int_0^sK^-(X_r)\, \vd r+ \int_0^s \sigma^-(X_r)\, \vd  l_r}\tilde{h}_s^2\, \vd l_s\r)^{1/2}\Bigg\}.
\end{align*}
\end{lemma}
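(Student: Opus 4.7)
The plan is to dominate the stochastic integral via a Lenglart-type argument and then control the $L^2$-size of $W^{\tilde h}_s(v,v)$ by decomposing it along its three inhomogeneous sources, matching the three curvature terms on the right-hand side. Concretely, set $\phi_s := \int_0^s K^-(X_r)\,\vd r + \int_0^s \sigma^-(X_r)\,\vd l_r$ and $Y_s := |W_s^{\tilde h}(v,v)|^2 \e^{-\phi_s}$. Applying Lenglart's inequality to the local martingale $I_t := \int_0^t h_s\langle W_s^{\tilde h}(v,v), \ptr_s\vd B_s\rangle$ and then Cauchy--Schwarz yields
\[ \E \sup_{t\in[0,T]} |I_t| \le 3 \,\E\bigg(\int_0^T h_s^2 \e^{\phi_s} Y_s\,\vd s\bigg)^{1/2} \le 3 C(h)^{1/2}\big(\E\sup_{s\in[0,T]} Y_s\big)^{1/2}, \]
which reduces the task to bounding $\big(\E\sup_{s\in[0,T]} Y_s\big)^{1/2}$ by the three-term sum in the statement.

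Next I would exploit the linearity of the covariant SDE for $W^{\tilde h}(v,v)$ in its inhomogeneous sources to split $W^{\tilde h} = W^\alpha + W^\beta + W^\gamma$, where each component solves the same equation with only one source (respectively the $R(\ptr_s\vd B_s, \tilde Q_s(\tilde h_s v))\tilde Q_s v$ term, the $\bd^*R - R(Z) + \nabla\Ric_Z^\sharp$ drift, and the $\nabla^2 N - R(N)$ local-time term), while the homogeneous $\Ric_Z^\sharp$ and $(\nabla N)^\sharp$ terms are retained in each. Setting $Y^i := |W^i|^2 \e^{-\phi}$ for $i\in\{\alpha,\beta,\gamma\}$, the pathwise inequality $\sqrt{\sup_s Y_s} \le \sum_i \sqrt{\sup_s Y^i_s}$ reduces the task to bounding each $(\E\sup Y^i)^{1/2}$ separately.

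It\^o's formula for $Y^i$, combined with the lower curvature bounds $\Ric_Z \ge K$ and $-\nabla N \ge \sigma$, ensures that the homogeneous $\Ric_Z^\sharp$ and $(\nabla N)^\sharp$ contributions are absorbed by $-\vd\phi_s$. Using $|\tilde Q_s|^2 \le \e^{\phi_s}$ and the bounds $|R|_{\HS} \le \alpha$, $|\bd^*R-R(Z)+\nabla\Ric_Z^\sharp| \le \beta$, $|\nabla(\nabla N)^\sharp + R(N)| \le \gamma$, one obtains for $W^\alpha$ a semimartingale estimate whose martingale part has quadratic variation controlled by $\alpha^2 \tilde h_s^2 \e^{\phi_s}\vd s$; a second application of Lenglart and Young's inequality then gives the quadratic inequality $A \le 6\sqrt{AB}+B$ with $A = \E\sup Y^\alpha$ and $B = |v|^4\,\E\int_0^T \alpha^2 \tilde h_s^2 \e^{\phi_s}\vd s$, whose positive root is $\sqrt A \le (3+\sqrt{10})\sqrt B$, exactly as in Lemma~\ref{L1}. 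For $i\in\{\beta,\gamma\}$ the process $W^i$ is of bounded variation, and a pathwise analysis of $\vd\sqrt{Y^i}$ together with Cauchy--Schwarz produces the corresponding bound with prefactor $\frac{1}{2}$. The main obstacle is the Young's optimization for $Y^\alpha$ that yields the sharp constant $3+\sqrt{10}$ in presence of the exponential weight $\e^{-\phi}$; the analysis for $Y^\beta, Y^\gamma$ is comparatively easier thanks to their bounded variation.
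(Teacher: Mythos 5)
Your proposal is correct and follows essentially the same route as the paper's proof: Lenglart plus Cauchy--Schwarz with the weight $\e^{-\phi_s}$, a three-fold splitting of $W^{\tilde h}$ according to its sources (your $W^{\alpha},W^{\beta},W^{\gamma}$ are, via variation of constants, exactly the paper's $\xi^{(1)}$, $-\tfrac12\xi^{(2)}$, $-\tfrac12\xi^{(3)}$ built from $\tilde Q_s\int_0^s\tilde Q_r^{-1}(\cdot)$), It\^o's formula with the lower bounds on $\Ric_Z$ and $-\nabla N$ absorbing the homogeneous terms, the Lenglart--Young optimization giving $3+\sqrt{10}$ for the martingale-source piece, and a pathwise $\sqrt{\,\cdot+\eps\,}$ estimate for the two bounded-variation pieces. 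The only differences are cosmetic (order of Cauchy--Schwarz/Minkowski and writing the pieces as single-source SDEs rather than explicit Duhamel integrals), and your final assembly for the $\beta$ and $\gamma$ terms is exactly as terse as the paper's own.
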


\begin{proof}
By the Lenglart inequality and the Minkowski inequality, we have
\begin{align}\label{W-est}
& \E\l[\sup_{t\in [0,T]} \Big | \int_0^{t\wedge \tau_k} h_s \langle W_s^{\tilde{h}}(v,v), \, \ptr_s \vd B_s  \rangle \Big| \r] \\
&\leq  3\E  \l[ \int_0^{T\wedge \tau_k} h_s^2|W_s^{\tilde{h} }(v,v)|^2 \, \vd s  \r]^{1/2} \notag\\
&\leq 3 \E\l[\l(\int_0^{T\wedge \tau_k}h_s^2 \Big|\tilde{Q}_{s}\int_0^{s}\tilde{Q}_r^{-1} R(\ptr_r \vd B_r, \tilde{Q}_r(\tilde{h}(r)v))\tilde{Q}_r(v)\Big|^2\, \vd s \r)^{1/2}\r] \notag \\
&\quad+\frac{3}{2}\E\l[\l(\int_0^{T\wedge \tau_k} h_s^2  \Big|\tilde{Q}_{s}\int_0^{s} \tilde{Q}_r^{-1} (\bd^*R-R(Z)+\nabla \Ric_Z)^{\sharp}(\tilde{Q}_r(\tilde{h}(r)v), \tilde{Q}_r(v))\,\vd r \Big|^2\, \vd s \r)^{1/2}\r] \notag\\
&\quad+ \frac {3}2  \E\l[ \l(\int_0^{T\wedge \tau_k} h_s^2 \Big|\tilde{Q}_{s}\int_0^{s} \tilde{Q}_r^{-1} (\nabla^2N-R(N))^{\sharp}(\tilde{Q}_r(\tilde{h}(r)v), \tilde{Q}_r(v))\,\vd l_r \Big|^2\, \vd s\r)^{1/2}\r] .
\end{align}
Let 
\begin{align*}
& \xi_s^{(1)}= \tilde{Q}_{s}\int_0^{s}\tilde{Q}_r^{-1} R(\ptr_r \vd B_r, \tilde{Q}_r(\tilde{h}(r)v))\tilde{Q}_r(v); \\
& \xi_s^{(2)}= \tilde{Q}_{s}\int_0^{s} \tilde{Q}_r^{-1} (\bd^*R-R(Z)+\nabla \Ric_Z)^{\sharp}(\tilde{Q}_r(\tilde{h}(r)v), \tilde{Q}_r(v))\,\vd r; \\
& \xi_s^{(3)}= \tilde{Q}_{s}\int_0^{s} \tilde{Q}_r^{-1} (\nabla^2N-R(N))^{\sharp}(\tilde{Q}_r(\tilde{h}(r)v), \tilde{Q}_r(v))\,\vd l_r.
\end{align*}
Then,  we have
\begin{align}\label{term-1}
& \E\l[\l(\int_0^{T\wedge \tau_k}h_s^2 |\xi_s^{(1)}|^2\, \vd s \r)^{1/2}\r] \notag \\
&\leq  \E \l[\sup_{s\in [0,{T\wedge \tau_k}]}  |\xi_s^{(1)}|^2 \e^{-\int_0^sK^-(X_r)\, \vd r- \int_0^s \sigma^-(X_r)\, \vd  l_r}\r]^{1/2}
\E\l[\int_0^{T\wedge \tau_k} h_s^2\e^{\int_0^sK^-(X_r)\, \vd r+ \int_0^s \sigma^-(X_r)\, \vd  l_r}\, \vd s\r]^{1/2},
\end{align}
and
\begin{align}\label{xi-1}
 &\vd \Big|\e^{-\frac{1}{2}\int_0^sK^-(X_r)\, \vd r-\frac{1}{2}\int_0^s \sigma^-(X_r)\, \vd  l_r} \xi_s^{(1)}\Big|^2 \notag\\
  &=2\e^{-\int_0^sK^-(X_r)\, \vd r- \int_0^s \sigma^-(X_r)\, \vd  l_r}\Big\langle R(\ptr_s\vd B_s, \tilde{Q}_s(\tilde{h}_sv))\tilde{Q}_s(v),  \xi^{(1)}_s\Big\rangle \notag\\
&\quad+\e^{-\int_0^sK^-(X_r)\, \vd r- \int_0^s \sigma^-(X_r)\, \vd  l_r}\big|R^{\sharp,\sharp}(\tilde{Q}_s(\tilde{h}_sv),\tilde{Q}_s(v))\big |_\HS^2\,\vd s \notag \\
&\quad-\e^{-\int_0^sK^-(X_r)\, \vd r- \int_0^s \sigma^-(X_r)\, \vd  l_r}\Ric_Z\big(\xi^{(1)}_s, \xi^{(1)}_s\big)\,\vd s -\e^{-\int_0^sK^-(X_r)\, \vd r -\int_0^s \sigma^-(X_r)\, \vd  l_r}\l \langle-\nabla_{\xi^{(1)}_s} N, \xi_s^{(1)} \r\rangle\, \vd l_s \notag \\
&\quad-K^-(X_s) \Big|\e^{-\frac{1}{2}\int_0^sK^-(X_r)\, \vd r- \frac{1}{2}\int_0^s \sigma^-(X_r)\, \vd  l_r} \xi_s^{(1)}\Big|^2\,\vd s-\sigma^- (X_s) \Big|\e^{-\frac{1}{2}\int_0^sK^-(X_r)\, \vd r- \frac{1}{2}\int_0^s \sigma^-(X_r)\, \vd  l_r}\xi_s^{(1)}\Big|^2\,\vd l_s\notag \\
& \leq  2\e^{-\int_0^sK^-(X_r)\, \vd r- \int_0^s \sigma^-(X_r)\, \vd  l_r}\Big\langle R(\ptr_s\vd B_s, \tilde{Q}_s(\tilde{h}_sv))\tilde{Q}_s(v),  \xi^{(1)}_s\Big\rangle \notag \\
&\quad +\e^{-\int_0^sK^-(X_r)\, \vd r- \int_0^s \sigma^-(X_r)\, \vd  l_r}\big|R^{\sharp,\sharp}(\tilde{Q}_s(\tilde{h}_sv),\tilde{Q}_s(v))\big|_\HS^2\,\vd s \notag \\
& \leq 
2\e^{-\int_0^sK^-(X_r)\, \vd r- \int_0^s \sigma^-(X_r)\, \vd  l_r}\Big\langle R(\ptr_s\vd B_s, \tilde{Q}_s(\tilde{h}_sv))\tilde{Q}_s(v),  \xi^{(1)}_s\Big\rangle \notag \\
&\quad +\alpha(X_s)^2\e^{\int_0^sK^-(X_r)\, \vd r+ \int_0^s \sigma^-(X_r)\, \vd  l_r} \tilde{h}_s^2\, \vd s,\quad s<\tau_k,
\end{align}
which implies
\begin{align*}
 & \E \l[\sup_{s\in [0,{T\wedge \tau_k}]}  |\xi_s^{(1)}|^2 \e^{-\int_0^sK^-(X_r)\, \vd r- \int_0^s \sigma^-(X_r)\, \vd  l_r}\r] \\
&\leq 6 \E \l[ \l(\int_0^{T\wedge \tau_k} |\xi_s^{(1)}|^2 \alpha(X_s)^2 \tilde{h}_s^2 \, \vd s   \r)^{1/2} \r]+ \E\l[\int _0^{T\wedge \tau_k} \alpha(X_s)^2 \e^{\int_0^sK^-(X_r)\, \vd r+ \int_0^s \sigma^-(X_r)\, \vd  l_r}\tilde{h}_s^2\, \vd s\r] \\
&\leq 6 \E \l[\l(\sup_{s\in [0,{T\wedge \tau_k}]}  |\xi_s^{(1)}|^2 \e^{-\int_0^sK^-(X_r)\, \vd r- \int_0^s \sigma^-(X_r)\, \vd  l_r}\r)^{1/2} \l(\int_0^{T\wedge \tau_k} \alpha(X_s)^2 \e^{\int_0^sK^-(X_r)\, \vd r+ \int_0^s \sigma^-(X_r)\, \vd  l_r}\tilde{h}_s^2 \, \vd s  \r)^{1/2}\r]\\
&\quad +\E \l[ \int_0^{T\wedge \tau_k} \alpha(X_s)^2 \e^{\int_0^sK^-(X_r)\, \vd r+ \int_0^s \sigma^-(X_r)\, \vd  l_r}\tilde{h}_s^2 \, \vd s   \r]\\
&\leq  \frac{1}{2\delta} \E \l[\sup_{s\in [0,{T\wedge \tau_k}]}  |\xi_s^{(1)}|^2 \e^{-\int_0^sK^-(X_r)\, \vd r- \int_0^s \sigma^-(X_r)\, \vd  l_r}\r] + 18\delta \E \l[\int_0^{T\wedge \tau_k} \alpha(X_s)^2 \e^{\int_0^sK^-(X_r)\, \vd r+ \int_0^s \sigma^-(X_r)\, \vd  l_r}\tilde{h}_s^2 \, \vd s  \r]\\
&\quad +\E \l[ \int_0^{T\wedge \tau_k} \alpha(X_s)^2 \e^{\int_0^sK^-(X_r)\, \vd r+ \int_0^s \sigma^-(X_r)\, \vd  l_r}\tilde{h}_s^2 \, \vd s   \r], \quad \delta>0.
\end{align*}
Substituting the optimal choice $\delta=\ff 1 6\big(3+\ss{10}\big)$,  we get
\begin{align*}
& \E \l[\sup_{s\in [0,T\wedge \tau_k]}  |\xi_s^{(1)}|^2 \e^{-\int_0^sK^-(X_r)\, \vd r- \int_0^s \sigma^-(X_r)\, \vd  l_r}\r] \\
&\leq  (3+\sqrt{10})^2\E \l[ \int_0^{T\wedge \tau_k}\alpha(X_s)^2 \e^{\int_0^sK^-(X_r)\, \vd r+ \int_0^s \sigma^-(X_r)\, \vd  l_r}\tilde{h}_s^2 \, \vd s   \r].
\end{align*}
Combining this with \eqref{term-1} and letting $k$ tend to $\infty$  yields
\begin{align*}
& \E\l(\l[\int_0^T h_s^2 |\xi_s^{(1)}|^2\, \vd s \r]^{1/2}\r) \\
&\leq  \E \l[\sup_{s\in [0,T]}  |\xi_s^{(1)}|^2 \e^{-\int_0^sK^-(X_r)\, \vd r- \int_0^s \sigma^-(X_r)\, \vd  l_r}\r]^{1/2}
\E\l[\int_0^{T} h_s^2\e^{\int_0^sK^-(X_r)\, \vd r+ \int_0^s \sigma^-(X_r)\, \vd  l_r}\, \vd s\r]^{1/2} \\
&\leq \Big(3+\sqrt{10}\Big) \E\l[\int_0^T\alpha(X_s)^2\e^{\int_0^sK^-(X_r)\, \vd r+ \int_0^s \sigma^-(X_r)\, \vd  l_r}\tilde{h}_s^2\, \vd s\r]^{1/2}C(h)^{1/2}.
\end{align*}
Moreover, for any $\eps>0$, 
\begin{align*}
&\vd \l(\Big|\e^{-\frac{1}{2}\int_0^tK^-(X_s)\, \vd t-\frac{1}{2} \int_0^t \sigma^-(X_s)\, \vd  l_s}\xi_t^{(2)} \Big|^2+\eps\r)^{1/2} = \frac{\vd \Big|\e^{-\frac{1}{2}\int_0^tK^-(X_s)\, \vd t-\frac{1}{2} \int_0^t \sigma^-(X_s)\, \vd  l_s}\xi_t^{(2)} \Big|^2}{2\l(\Big|\e^{-\frac{1}{2}\int_0^tK^-(X_s)\, \vd t-\frac{1}{2} \int_0^t \sigma^-(X_s)\, \vd  l_s}\xi_t^{(2)}  \Big|^2+\eps\r)^{1/2}}\\
& = \e^{-\int_0^tK^-(X_s)\, \vd t-\int_0^t \sigma^-(X_s)\, \vd  l_s}  \l(\Big|\e^{-\frac{1}{2}\int_0^tK^-(X_s)\, \vd t-\frac{1}{2} \int_0^t \sigma^-(X_s)\, \vd  l_s}\xi_t^{(2)}\Big|^2+\eps\r)^{-1/2} \\
&\quad  \times\Bigg[- (\Ric_Z+K^-(X_t)g)\Big(\xi_t^{(2)}, \xi_t^{(2)}\Big)\,\vd t + (\nabla N-\sigma^-(X_t)g)\Big(\xi_t^{(2)}, \xi_t^{(2)}\Big)\,\vd l_t \\
&\qquad  +   \Big \langle  (\bd^*R-R(Z)+\nabla \Ric_Z)^{\sharp}(\tilde{Q}_t(\tilde{h}(t)v), \tilde{Q}_t(v)), \xi_t^{(2)} \Big\rangle \,\vd t \Bigg] \\
&\leq  \e^{-\int_0^tK^-(X_s)\, \vd t- \int_0^t \sigma^-(X_s)\, \vd  l_s} \l(\Big|\e^{-\frac{1}{2}\int_0^tK^-(X_s)\, \vd s-\frac{1}{2} \int_0^t \sigma^-(X_s)\, \vd  l_s} \xi_t^{(2)} \Big|^2+\eps\r)^{-1/2}  \\
&\qquad \times  \Big \langle (\bd^*R-R(Z)+\nabla \Ric_Z)^{\sharp}(\tilde{Q}_t(\tilde{h}(t)v), \tilde{Q}_t(v)), \xi_t^{(2)} \Big \rangle \,\vd t  \\
&\leq \beta(X_t) \e^{\frac{1}{2}\int_0^tK^-(X_s)\, \vd t+ \frac{1}{2} \int_0^t \sigma^-(X_s)\, \vd  l_s}\tilde{h}(t)\,\vd t, \quad t<\tau_k.
\end{align*}
Taking the integral on both sides, letting $\eps$ tend to 0 and $k$ tend  to $\infty$, we then conclude that
\begin{align}\label{xi-2}
 &\Big|\xi_t^{(2)} \Big|\leq 
\e^{\frac{1}{2} \int_0^tK^-(X_s)\, \vd s+\frac{1}{2} \int_0^t \sigma^-(X_s)\, \vd  l_s} \int_0^t\beta(X_s) \e^{\frac{1}{2} \int_0^sK^-(X_r)\, \vd r+\frac{1}{2}  \int_0^s \sigma^-(X_r)\, \vd  l_r} \tilde{h}_s\, \vd s.
\end{align}
With a similar argument, we have
\begin{align}\label{xi-3}
 &\Big|\xi_t^{(3)} \Big|\leq 
\e^{\frac{1}{2}\int_0^tK^-(X_s)\, \vd t+\frac{1}{2}\int_0^t \sigma^-(X_s)\, \vd  l_s} \int_0^t\gamma(X_s) \e^{\int_0^sK^-(X_r)\, \vd r+ \int_0^s \sigma^-(X_r)\, \vd  l_r} \tilde{h}_s\, \vd l_s.
\end{align}
These estimates together imply 
\begin{align*}
& \E\l[\sup_{t\in [0,T]} \Big | \int_0^t h_s \langle W_s^{\tilde{h}}(v,v), \, \ptr_s \vd B_s  \rangle \Big| \r] \\
&\leq  3C(h)^{1/2} \Bigg\{ \big(3+\sqrt{10}\big)\l(\E\int_0^T\alpha(X_s)^2\e^{\int_0^sK^-(X_r)\, \vd r+ \int_0^s \sigma^-(X_r)\, \vd  l_r} \tilde{h}_s^2\, \vd s\r)^{1/2} \\
&\quad  \quad + \frac{1}{2}\l(\E\int_0^T\beta(X_s)^2 \e^{\int_0^sK^-(X_r)\, \vd r+ \int_0^s \sigma^-(X_r)\, \vd  r}\tilde{h}_s^2 \, \vd s\r)^{1/2}\\
&\quad  \quad + \frac{1}{2}\l(\E\int_0^T\gamma(X_s)^2 \e^{\int_0^sK^-(X_r)\, \vd r+ \int_0^s \sigma^-(X_r)\, \vd  l_r}\tilde{h}_s^2\, \vd l_s\r)^{1/2}\Bigg\}.\qedhere
\end{align*}
\end{proof}

\begin{lemma}\label{lem3}
  Let  $D$ be an open relatively compact domain in $M$ and $x\in D$.
  Fix $T>0$ and
  suppose that $h$ is a bounded, non-negative and adapted process with
  paths in the Cameron-Martin space $L^{1,2}([0,T];\R)$.  Then for
  $f\in \mathcal{B}_b(M)$ and $v\in T_xM$,
  \begin{align}\label{Loc_mart_Hessian}
    &(\nabla {\bf d} P_{T-t}f)(\tilde{Q}_t(\tilde{h}(t)v),\tilde{Q}_t(\tilde{h}(t)v))+({\bf d} P_{T-t}f)
      \big(W_t^{\tilde{h}}(v,\tilde{h}(t) v)\big) \notag\\
    &\quad -2{\bf d } P_{T-t}f(\tilde{Q}_t(\tilde{h}(t) v))\int_0^t\langle \tilde{Q}_s(h_sv),
      \ptr_s \vd  B_s \rangle
      \notag\\
    &\quad -P_{T-t}f(X_t)\int_0^t\langle W_s^{\tilde{h}}(v, h_s v),\ptr_s\vd B_s \rangle \notag\\
    &\quad +P_{T-t}f(X_t)\l(\l(\int_0^t\langle \tilde{Q}_s(h_sv),
      \ptr_s \vd  B_s \rangle\r)^2-\int_0^t |\tilde{Q}_s(h_sv)|^2  \,\vd s\r)
  \end{align}
  is a local martingale, and in particular a true martingale on $[0, T\wedge\tau_D)$.
\end{lemma}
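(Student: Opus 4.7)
The plan is to apply It\^o's formula to each summand in \eqref{Loc_mart_Hessian}, use the backward Kolmogorov equation $\partial_t P_{T-t}f=-\tfrac12 LP_{T-t}f$ together with the Neumann condition $NP_{T-t}f|_{\partial M}=0$, and verify that every $\vd t$ and every $\vd l_t$ drift contribution cancels, leaving only stochastic integrals against $\ptr_t\vd B_t$. On $[0,T\wedge\tau_D)$ all curvature tensors, $\tilde h$, $\bd P_{T-t}f$ and $\nabla\bd P_{T-t}f$ are bounded, so once the local-martingale property is established, the true-martingale property on $[0,T\wedge\tau_D)$ will follow from \eqref{CD-h}.

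First I would compute $\vd\{(\bd P_{T-t}f)(\tilde Q_t(\tilde h(t)v))\}$. It\^o's formula combined with the first Weitzenb\"ock identity $(\Delta+Z)\bd u=\bd(\Delta+Z)u+\Ric_Z^\sharp(\bd u)$ cancels the interior $\Ric_Z^\sharp$ contribution against the $-\tfrac12\Ric_Z^\sharp(\tilde Q_t)\,\vd t$ drift of \eqref{tilde-Q}. For the $\vd l_t$ boundary drift one differentiates $NP_{T-t}f|_{\partial M}=0$ tangentially to obtain $\nabla_N(\bd P_{T-t}f)+(\bd P_{T-t}f)\circ(\nabla N)^\sharp=0$ on tangent vectors of $\partial M$; the $(\nabla N)^\sharp$ term in \eqref{tilde-Q} is designed precisely so that the residual $\vd l_t$ drift vanishes. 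The remainder is a stochastic integral plus the cross drift $(\bd P_{T-t}f)(\tilde Q_t(h_t v))\,\vd t$ inherited from $(\tilde h)'_t=h_t$.

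Next I would apply It\^o's formula to the Hessian term $(\nabla\bd P_{T-t}f)(\tilde Q_t(\tilde h(t)v),\tilde Q_t(\tilde h(t)v))$. Commuting $L$ with $\nabla\bd$ by the second-order Weitzenb\"ock formula introduces, in the interior, the curvature tensors $R^{\sharp,\sharp}$ and $\bd^* R-R(Z)+\nabla\Ric_Z^\sharp$, and, on $\partial M$, the tensor $\nabla(\nabla N)^\sharp+R(N)$; these are exactly the tensors driving $W_t^{\tilde h}$. The symmetric $\Ric_Z^\sharp$ drift from \eqref{tilde-Q} applied to the two slots cancels the corresponding Weitzenb\"ock drift, and the analogous symmetric cancellation on $\partial M$ uses the Neumann condition differentiated twice tangentially. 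Adding $(\bd P_{T-t}f)(W_t^{\tilde h}(v,\tilde h(t)v))$ and expanding via the defining covariant equation for $W_t^{\tilde h}$ converts the remaining $R^{\sharp,\sharp}$, $\bd^* R-R(Z)+\nabla\Ric_Z^\sharp$ and $\nabla(\nabla N)^\sharp+R(N)$ drifts into exact differentials of a martingale, leaving only the cross terms generated by $(\tilde h)'_t=h_t$.

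These surviving cross drifts --- essentially of the form $2(\nabla\bd P_{T-t}f)(\tilde Q_t(\tilde h(t)v),\tilde Q_t(h_t v))\,\vd t$ and $(\bd P_{T-t}f)(\tilde Q_t(h_t v))\,\vd t$ --- are matched exactly by the It\^o expansions of the last three summands in \eqref{Loc_mart_Hessian}. Indeed, the cross variation of $\vd\{(\bd P_{T-t}f)(\tilde Q_t(\tilde h(t)v))\}$ with $\int_0^t\langle\tilde Q_s(h_s v),\ptr_s\vd B_s\rangle$ produces precisely the needed $(\nabla\bd P_{T-t}f)$ drift, while It\^o's formula for $P_{T-t}f(X_t)\bigl(\int_0^t\langle\tilde Q_s(h_s v),\ptr_s\vd B_s\rangle\bigr)^2$ supplies both a covariation drift matching $(\bd P_{T-t}f)(\tilde Q_t(h_t v))$ and a quadratic-variation term $P_{T-t}f(X_t)|\tilde Q_t(h_t v)|^2\,\vd t$ that is killed by the explicit compensator $-\int_0^t|\tilde Q_s(h_s v)|^2\,\vd s$. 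The main obstacle is the careful bookkeeping of the $\vd l_t$ boundary drifts: as explained in Remarks~\ref{rem1}, item 5), it is precisely the use of $\tilde Q_t$ driven by $(\nabla N)^\sharp$, rather than $Q_t$ driven by $\II^\sharp$, that simultaneously avoids the divergent factor $n$ in \eqref{QN} and produces the correct tensor to combine with the second tangential derivative of $NP_{T-t}f=0$ into the boundary contribution $\nabla(\nabla N)^\sharp+R(N)$, thereby closing every boundary cancellation.
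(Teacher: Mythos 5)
Your proposal is correct and follows essentially the same route as the paper: the backward Kolmogorov equation with the Neumann condition, the Weitzenb\"ock/commutation identities, the specific drifts of $\tilde{Q}_t$ and $W_t^{\tilde h}$, and the covariation bookkeeping that cancels every $\vd t$ and $\vd l_t$ term. The only difference is organizational --- you It\^o-expand the five summands with $\tilde h$ already inserted, whereas the paper first shows the unweighted $M_t(v,v)=\nabla\bd P_{T-t}f(\tilde Q_t(v),\tilde Q_t(v))+\bd P_{T-t}f(W_t(v,v))$ is a local martingale and then introduces $\tilde h$ through compensated integrals, an exchange of the order of integration and integration by parts --- which amounts to the same computation.
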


\begin{proof}
 We first prove that  for
  $f\in \mathcal{B}_b(M)$ and $v\in T_xM$,
  \begin{align*}
    M_t(v,v)=\nabla {\bf d} P_{T-t}f(\tilde{Q}_t(v), \tilde{Q}_t(v))+({\bf d} P_{T-t}f)(W_t(v,v))
  \end{align*}
  is a local martingale where 
  \begin{align}\label{W-def}
W_t(v,v)=&\tilde{Q}_t\int_0^t \tilde{Q}_s^{-1} R(\ptr_s \vd B_s, \tilde{Q}_s(v))\tilde{Q}_s(v) \notag\\
&-\frac{1}{2} \tilde{Q}_t\int_0^t \tilde{Q}_s^{-1} (\bd^*R-R(Z)+\nabla \Ric_Z^{\sharp})(\tilde{Q}_s(v), \tilde{Q}_s(v))\,\vd s  \notag \\
&- \frac 12 \tilde{Q}_t\int_0^t \tilde{Q}_s^{-1} (\nabla(\nabla N)^{\sharp}-R(N))(\tilde{Q}_s(v), \tilde{Q}_s(v))\,\vd l_s. 
\end{align}
Let us recall some commutation rules
which will be helpful in the subsequent calculations:
\begin{align}
&\bd L f=(\tr \nabla^2 +\nabla _{Z} ) \bd f-\bd f(\Ric^{\sharp}-(\nabla Z)^{\sharp}); \label{commutation-1}\\
&\nabla \bd (\Delta f)=\tr \nabla^2(\nabla \bd f)-(\nabla \bd f)(\Ric^{\sharp}\odot\id+\id\odot\Ric^{\sharp}-2R^{\sharp, \sharp})-\bd f(\bd ^*R+\nabla \Ric^{\sharp});\label{commutation-3}\\
&\nabla \bd (Z(f))=\nabla_{Z}(\nabla \bd f)+(\nabla \bd f)((\nabla Z)^{\sharp}\odot\id+\id\odot(\nabla Z)^{\sharp})+\bd f(\nabla (\nabla Z)^{\sharp}+R(Z) );\label{commutation-4}\\
&\nabla \bd (Nf)=\nabla_N(\nabla \bd f)+(\nabla \bd f)((\nabla N)^{\sharp}\odot\id+\id\odot (\nabla N )^{\sharp})+\bd f( \nabla(\nabla N)^{\sharp}+R(N))\label{commutation-5}
\end{align}
where
$
\nabla \bd f(\nabla N\odot \id(v,v))=\nabla \bd f(\nabla_{v} N, v).
$
Let
\begin{align}\label{M-martingale}
M_t(v,v)=\nabla \bd P_{T-t}f(\tilde{Q}_t(v), \tilde{Q}_t(v))+(\bd P_{T-t}f)(W_t(v,v)).
\end{align}
Then by It\^{o}'s formula we have
\begin{align}\label{Ito-Nt}
\vd M_t(v,v)
&=(\nabla_{\ptr_t\,\bd B_t}\nabla \bd P_{T-t}f)(\tilde{Q}_t(v)
,\tilde{Q}_t(v))+(\nabla_{\ptr_t\vd B_t}\bd P_{T-t}f)(W_t(v,v))\notag\\
&\quad+(\nabla \bd P_{T-t}f)\l({\rm D}\tilde{Q}_t(v), \tilde{Q}_t(v)\r)+(\nabla \bd P_{T-t}f)\l(\tilde{Q}_t(v), {\rm D}\tilde{Q}_t(v)\r)\notag\\
&\quad+\partial_t(\nabla \bd P_{T-t}f)(\tilde{Q}_t(v), \tilde{Q}_t(v))\, \vd t+\frac12 (\tr\nabla^2+\nabla_Z)(\nabla \bd P_{T-t}f)
(\tilde{Q}_t(v), \tilde{Q}_t(v))\,\vd t\notag\\
&\quad+\frac12 \nabla_{N}(\nabla \bd P_{T-t}f)
(\tilde{Q}_t(v), \tilde{Q}_t(v))\,\vd l_t+\frac12 \nabla _{N}(\bd P_{T-t}f)(W_t(v,v))\,\vd l_t\notag\\
&
\quad+(\bd P_{T-t}f)({\rm D}W_t(v,v))+\langle {\rm D} (\bd P_{T-t}f), {\rm D}W_t(v,v)\rangle\notag\\
&\quad+\partial_t(\bd P_{T-t}f)(W_t(v,v))\, \vd t+\frac12 
(\tr \nabla^2+\nabla_Z)(\bd P_{T-t}f)(W_t(v,v))\,\vd t.
\end{align}
Taking into account the commutation properties \eqref{commutation-3}--\eqref{commutation-5} and according to the definition of $\tilde{Q}_t$, for the terms on the right side of \eqref{Ito-Nt}, we observe that
\begin{align*}
&\partial_t(\nabla \bd P_{T-t}f)(\tilde{Q}_t(v), \tilde{Q}_t(v))\, \vd t+(\nabla \bd P_{T-t}f)\l({\rm D}\tilde{Q}_t(v), \tilde{Q}_t(v)\r)+(\nabla \bd P_{T-t}f)\l(\tilde{Q}_t(v), {\rm D}\tilde{Q}_t(v)\r)\\
&=-\frac12 \l((\tr \nabla^2+\nabla_Z)\nabla \bd P_{T-t}f\r)(\tilde{Q}_t(v), \tilde{Q}_t(v))\, \vd t
\\
&\quad+ (\nabla \bd P_{T-t}f)(\Ric_Z^{\sharp}(\tilde{Q}_t(v)) ,\tilde{Q}_t(v))\, \vd t \\
&\quad-\nabla\bd P_{T-t}f(R^{\sharp, \sharp}(\tilde{Q}_t(v),\tilde{Q}_t(v)))\, \vd t+\frac12 \bd P_{T-t}f\big((\bd ^*R-R(Z)+\nabla \Ric_Z)^{\sharp}(\tilde{Q}_t(v),\tilde{Q}_t(v))\big)\, \vd t\\
&\quad+ (\nabla \bd P_{T-t}f)\l(-\Ric_Z^{\sharp}(\tilde{Q}_t(v))\, \vd t+(\nabla {N})^ {\sharp}(\tilde{Q}_t(v))\, \vd l_t, \tilde{Q}_t(v)\r)\\
&=-\frac12 \l((\tr \nabla^2+\nabla_Z)\nabla \bd P_{T-t}f\r)(\tilde{Q}_t(v), \tilde{Q}_t(v))\, \vd t
\\
&\quad-\nabla\bd P_{T-t}f(R^{\sharp, \sharp}(\tilde{Q}_t(v),\tilde{Q}_t(v)))\, \vd t+\frac12 \bd P_{T-t}f\l((\bd ^*R-R(Z)+\nabla \Ric_Z)^{\sharp}(\tilde{Q}_t(v),\tilde{Q}_t(v))\r)\, \vd t\\
&\quad+ \nabla \bd P_{T-t}f\l((\nabla {N})^ {\sharp}(\tilde{Q}_t(v)), \tilde{Q}_t(v)\r)\, \vd l_t.
\end{align*}
Then using the definition of $W_t$, we calculate the quadratic covariation of $\bd P_{T-t}f$ and $W_t(v,v)$ as
\begin{align*}
\big[ {\rm D}(\bd P_{T-t}f), {\rm D} W_t(v,v)\big]&=\l[\nabla_{\ptr_t\vd B_t}\bd P_{T-t}f,R(\ptr_t\vd B_t,\tilde{Q}_t(v))\tilde{Q}_t(v)\r]\\
&=\tr\,\langle\nabla_{\displaystyle\bf.}\bd P_{T-t}f, R(\newdot,\tilde{Q}_t(v))\tilde{Q}_t(v)\rangle\,\vd t\\
&=\nabla \bd P_{T-t}f(R^{\sharp,\sharp}(\tilde{Q}_t(v),\tilde{Q}_t(v)))\, \vd t.
\end{align*}
According to the definition of $W_t(v,v)$, we have
\begin{align*}
(\bd &P_{T-t}f)({\rm D}W_t(v,v))+(\partial_t\bd P_{T-t}f)(W_t(v,v))\,\vd t\\
&=(\bd P_{T-t}f)\Big(R(\ptr_t\vd B_t, \tilde{Q}_t(v))\tilde{Q}_t(v)\\
&\quad -\frac12 (\bd^*R-R(Z)+\nabla \Ric_Z)^{\sharp}(\tilde{Q}_t(v), \tilde{Q}_t(v))\,\vd t\\
&\quad -\frac12 (\nabla^2N-R({N}))^{\sharp}(\tilde{Q}_t(v), \tilde{Q}_t(v))\,\vd l_t+\frac12 (\nabla {N})^{\sharp}(W_t(v, v))\,\vd l_t\Big)\\
&\quad -\frac{1}{2}(\tr \nabla^2+\nabla_Z)\bd P_{T-t}f(W_t(v,v))\,\vd t.
\end{align*}
We conclude that
\begin{align}\label{N-eq1}
  &(\nabla \bd P_{T-t}f)\l({\rm D}\tilde{Q}_t(v), \tilde{Q}_t(v)\r)+(\nabla \bd P_{T-t}f)\l(\tilde{Q}_t(v), {\rm D}\tilde{Q}_t(v)\r)+\partial_t(\nabla \bd P_{T-t}f)(\tilde{Q}_t(v), \tilde{Q}_t(v))\, \vd t\notag\\
  &\quad+\frac12 (\tr\nabla^2+\nabla_Z)(\nabla \bd P_{T-t}f)
    (\tilde{Q}_t(v), \tilde{Q}_t(v))\,\vd t
    +(\bd P_{T-t}f)({\rm D}W_t(v,v))+[ {\rm D} (\bd P_{T-t}f), {\rm D}W_t(v,v)]\notag\\
  &\quad+\partial_t(\bd P_{T-t}f)(W_t(v,v))\, \vd t+\frac12 
    (\tr \nabla^2+\nabla_Z)(\bd P_{T-t}f)(W_t(v,v))\,\vd t\notag\\
  &=-\frac12 (\bd P_{T-t}f)(\nabla^2 N-R(N))^{\sharp}(\tilde{Q}_t(v),\tilde{Q}_t(v))\,\vd l_t+\frac12 (\bd P_{T-t}f)((\nabla {N})^{\sharp}(W_t(v,v)))\,\vd l_t\notag\\
  &\quad+\frac12 \nabla \bd P_{T-t}f((\nabla {N})^{\sharp}(\tilde{Q}_t(v)),\tilde{Q}_t(v))\,\vd l_t+\frac12 
    \nabla \bd P_{T-t}f(\tilde{Q}_t(v), (\nabla {N})^{\sharp}(\tilde{Q}_t(v))\,\vd l_t.
\end{align}
On the other hand, for the terms in \eqref{Ito-Nt} related to the normal vector on the boundary, we have
\begin{align*}
&\nabla_N(\nabla \bd P_{T-t}f)(\tilde{Q}_t(v),\tilde{Q}_t(v))\,\vd l_t+\nabla_N(\bd P_{T-t}f)(W_t(v,v))\,\vd l_t\notag\\
&=-\nabla \bd P_{T-t}f\big((\nabla N)^{\sharp}(\tilde{Q}_t(v)),\tilde{Q}_t(v)\big)\,\vd l_t-
\nabla \bd P_{T-t}f(\tilde{Q}_t(v), (\nabla N)^{\sharp}(\tilde{Q}_t(v)))\,\vd l_t \notag\\
&\quad-\bd P_{T-t}f((\nabla^2 N+R(N))(\tilde{Q}_t(v), \tilde{Q}_t(v)))\,\vd l_t-
\bd P_{T-t}f((\nabla N)^{\sharp}(W_t(v,v)))\, \vd l_t.
\end{align*}
Combining this with \eqref{Ito-Nt} and \eqref{N-eq1}, we obtain
\begin{align*}
\vd M_t(v,v)
&\mequal\frac12 (\bd P_{T-t}f)\bigg((\nabla^2N+R({N}))(\tilde{Q}_t(v), \tilde{Q}_t(v))(X_t)\,\vd l_t-(\nabla {N})^{\sharp}(W_t(v, v))(X_t)\, \vd l_t\bigg)\\
&\quad-\frac12 (\bd P_{T-t}f)\l((\nabla^2 {N}+R({N}))(\tilde{Q}_t(v),\tilde{Q}_t(v))\r)(X_t)\, \vd l_t\\
&\quad +\frac12 (\bd P_{T-t}f)\l((\nabla {N})^{\sharp}(W_t(v, v))\r)(X_t)\, \vd l_t
=0.
\end{align*}
In other words, $M_t(v,v)$ is a local martingale. 

Let 
\begin{align*}
M_t^{\tilde{h}}(v,v)=\nabla \bd P_{T-t}f(\tilde{Q}_t(\tilde{h}(t)v), \tilde{Q}_t(v))+(\bd P_{T-t}f)(W_t^{\tilde{h}}(v,v)).
\end{align*}
From the definition of $W^{\tilde{h}}_t(v,v)$, resp.~$W_t(v,v)$, and in
view of the fact that $M_t(v,v)$ is a local martingale, we 
see that
\begin{align}\label{local-M1}
  M_t^{\tilde{h}}(v,v)&-\int_0^t(\nabla \bd P_{T-s}f)(\tilde{Q}_s(h_s
              v), \tilde{Q}_s(v))\,\vd s
\end{align}
is a local martingale as well.  Replacing in $M_t^{\tilde{h}}(v,v)$ the second
argument $v$ by $\tilde{h}(t)v$, we further get that also
\begin{align}\label{local-M2-1st}
  M_t^{\tilde{h}}(v,{\tilde{h}}(t)v)&-\int_0^t(\nabla \bd P_{T-s}f)(\tilde{Q}_s(h_s
                  v), \tilde{Q}_s(\tilde{h}(t)v))\,\vd  s\notag\\
                &-\int_0^t\nabla \bd P_{T-s}f(\tilde{Q}_s(h_sv), \tilde{Q}_s(\tilde{h}_sv))\,\vd s -\int_0^t(\bd P_{T-s}f)(W^{\tilde{h}}_s(v,h_sv))\,\vd s \notag \\
                &+ \int_0^t\int_0^s(\nabla \bd P_{T-r}f)(\tilde{Q}_r(h(r)
                  v), \tilde{Q}_r(h_sv))\,\vd r \,\vd s
\end{align}
is a local martingale. Note that $M_t^{\tilde{h}}(v,\tilde{h}(t)v)=M_t^{\tilde{h}}(v,v)\,\tilde{h}(t)$.
Exchanging the order of integration in the
last term shows that
\begin{align}
  &M_t^{\tilde{h}}(v,\tilde{h}(t)v)-\int_0^t(\nabla \bd P_{T-s}f)(\tilde{Q}_s(h_s
    v), \tilde{Q}_s(\tilde{h}(t)v))\,\vd s\notag\\
  &\qquad-\int_0^t\nabla \bd P_{T-s}f(\tilde{Q}_s(\tilde{h}_sv), \tilde{Q}_s(h_sv))\,\vd s -\int_0^t(\bd P_{T-s}f)(W^{\tilde{h}}_s(v,h_sv))\,\vd s \notag \\
  &\qquad+ \int_0^t(\nabla \bd P_{T-r}f)(\tilde{Q}_r(h(r)
    v), \tilde{Q}_r((\tilde{h}(t)-\tilde{h}(r))v))\,\vd r\notag\\
  &=M_t^{\tilde{h}}(v,\tilde{h}(t)v) -\int_0^t(\bd P_{T-s}f)(W^{\tilde{h}}_s(v,h_sv))\,\vd s -2\int_0^t\nabla \bd P_{T-s}f(\tilde{Q}_s(h_sv), \tilde{Q}_s(\tilde{h}_sv))\,\vd s\label{local-M2}
\end{align}
is a local martingale. Moreover, since $NP_{T-t}f(X_t)\1_{\{X_t\in \partial M\}}=0$ and  by the It\^{o} formula, we have
\begin{align}\label{Pt-martingale}
  P_{T-t}f(X_t)=P_Tf(x)+\int_0^t\bd P_{T-s}f(\ptr_s \,\vd   B_s).
\end{align}
The usual integration by parts yields 
\begin{align}\label{local-M2-1}
  \int_0^t(\bd P_{T-s}f)(W_s^{\tilde{h}}(v,h_sv))\,\vd s-P_{T-t}f(X_t)
  \int_0^t\langle W_s^{\tilde{h}}(v,h_sv), \ptr_s\,\vd   B_s \rangle
\end{align}
is a local martingale. 

On the other hand, from the It\^{o} formula and the commutation rule
 \eqref{commutation-1}, we obtain
 \begin{align*}
 \,\vd  (\bd P_{T-t}f(\tilde{Q}_t(v)))&=\nabla \bd P_{T-t}f(\ptr_t \,\vd B_t,\, \tilde{Q}_t(v) )-
 \frac12  \bd (\Delta P_{T-t}f) (\tilde{Q}_t(v))\,\vd t\\
 &\quad  +\frac12 (\tr \nabla ^2 \bd P_{T-t}f)(\tilde{Q}_t(v))\,\vd t+\frac 12  \nabla _N (\bd P_{T-t}f)(\tilde{Q}_t(v))\,\vd  l_t \\
 &\quad - \frac12 \bd P_{T-t}f(\Ric^{\sharp}(\tilde{Q}_t(v)))\,\vd t-\frac12 \bd P_{T-t}f((\nabla N)^{\sharp}(\tilde{Q}_t(v)))\,\vd l_t\\
 &=\nabla \bd P_{T-t}f(\ptr_t\,\vd B_t,\, \tilde{Q}_t(v) )+ \frac 12  \nabla _N (\bd P_{T-t}f)(\tilde{Q}_t(v))\,\vd  l_t\\
&\quad -\frac12 \bd P_{T-t}f((\nabla N)^{\sharp}(\tilde{Q}_t(v)))\,\vd l_t\\
 &=\nabla \bd P_{T-t}f(\ptr_t\,\vd B_t,\, \tilde{Q}_t(v) ) + \frac 12 \bd (N(P_{T-t}f)) (\tilde{Q}_t(v))\,\vd  l_t\\
 &=\nabla \bd P_{T-t}f(\ptr_t\,\vd B_t,\, \tilde{Q}_t(v) ).
 \end{align*}
It thus follows that
\begin{align*}
  \bd P_{T-t}f(\tilde{Q}_t(\tilde{h}(t)v))=\bd P_{T}f(v)+\int_0^t(\nabla \bd P_{T-s}f)
  (\ptr_s\,\vd   B_s, \tilde{Q}_s(\tilde{h}_s v))+\int_0^t\bd P_{T-s}f(\tilde{Q}_s(h_sv))\,\vd s.
\end{align*}
Integration by parts yields that
\begin{align}\label{local-M3}
  &\int_0^t(\nabla \bd P_{T-s}f)(\tilde{Q}_s(h_sv), \tilde{Q}_s(\tilde{h}_s v))
    \,\vd s-\bd P_{T-t}f(\tilde{Q}_t(\tilde{h}(t) v))\int_0^t\langle \tilde{Q}_s(h_sv),
    \ptr_s\,\vd   B_s\rangle \notag\\
  &\quad+\int_0^t\bd P_{T-s}f(\tilde{Q}_s(h_sv))\,\vd s \,\int_0^t\langle \tilde{Q}_s(h_sv)
    \ptr_s\,\vd   B_s\rangle
\end{align}
is also a local martingale. Concerning the last term in
\eqref{local-M3}, we note that
\begin{align*}
  &\int_0^t\bd P_{T-s}f(\tilde{Q}_s(h_sv))\,\vd s \,\int_0^t\langle \tilde{Q}_s(h_sv),\ptr_s\, \vd  B_s\rangle - \int_0^t \bd P_{T-s}f(\tilde{Q}_s(h_sv))\l(\int_0^s\langle \tilde{Q}_r(h(r)v),\ptr_r\,\vd   B_r\rangle\r)\,\vd s
\end{align*}
is a local martingale. Combining this with \eqref{local-M3} we
conclude that
\begin{align}\label{local-M3-1}
  &\int_0^t(\nabla \bd P_{T-s}f)(\tilde{Q}_s(h_sv), \tilde{Q}_s(\tilde{h}_s v))
    \,\vd s-\bd P_{T-t}f(\tilde{Q}_t(\tilde{h}(t) v))\int_0^t\langle \tilde{Q}_s(h_sv),
    \ptr_s\,\vd   B_s\rangle \notag\\
  &\quad+\int_0^t \bd P_{T-s}f(\tilde{Q}_s(h_sv))\int_0^s\langle \tilde{Q}_r(h(r)v),\ptr_r\,\vd   B_r\rangle\,\vd s
\end{align}
is a local martingale.  

Using the local martingales
\eqref{local-M2-1} and \eqref{local-M3-1} to replace the last two
terms in \eqref{local-M2}, we conclude that
\begin{align}\label{Martingale-1}
  &(\nabla \bd P_{T-t}f)(\tilde{Q}_t(\tilde{h}(t)v), \tilde{Q}_t(\tilde{h}(t) v))+(\bd P_{T-t}f)
    (W_t^{\tilde{h}}(v,\tilde{h}(t) v))  \notag\\
  &\quad -P_{T-t}f(X_t)\int_0^t\langle W_s^{\tilde{h}}(v,h_s v),\ptr_s\,\vd B_s \rangle \notag\\
  &\quad -2\bd P_{T-t}f(\tilde{Q}_t(\tilde{h}(t) v))\int_0^t\langle \tilde{Q}_s(h_sv),
    \ptr_s\,\vd B_s \rangle  \notag\\
  &\quad +2\int_0^t\bd P_{T-s}f(\tilde{Q}_s(h_sv)) \int_0^s\langle \tilde{Q}_r(h(r)v),\ptr_r\,\vd B_r\rangle \,\vd s 
\end{align}
is a local martingale as well.  On the other hand, by the product rule
for martingales, we have
\begin{align}\label{eqn}
  \l(\int_0^t\langle \tilde{Q}_s(h_sv),
    \ptr_s\,\vd B_s \rangle\r)^2-\int_0^t |\tilde{Q}_s(h_sv)|^2\,\vd s = 2\int_0^t \l(\int_0^s\langle \tilde{Q}_r(h(r)v),
    \ptr_r\,\vd B_r \rangle\r)\langle \tilde{Q}_s(h_sv),
    \ptr_s\,\vd B_s \rangle
\end{align}
which along with \eqref{Pt-martingale} implies that
\begin{align*}
  & P_{T-t}f(X_t)\l(\l(\int_0^t\langle \tilde{Q}_s(h_sv),
    \ptr_s\,\vd B_s \rangle\r)^2-\int_0^t |\tilde{Q}_s(h_sv)|^2\,\vd s\r) \\
  &\quad-2\int_0^t\bd P_{T-s}f(\tilde{Q}_s(h_sv)) \int_0^s\langle \tilde{Q}_r(h(r)v),\ptr_r\,\vd B_r\rangle \,\vd s
\end{align*}
is a local martingale.  Applying this observation to
~\eqref{Martingale-1}, we finally see that
\begin{align*}
  &(\nabla \bd P_{T-t}f)\big(\tilde{Q}_t(\tilde{h}(t)v), \tilde{Q}_t(\tilde{h}(t) v)\big)+(\bd P_{T-t}f)
    (W_t^{\tilde{h}}(v,\tilde{h}(t) v))  \notag\\
  &\ -2\bd P_{T-t}f(\tilde{Q}_t(\tilde{h}(t) v))\int_0^t\langle \tilde{Q}_s(h_sv),
    \ptr_s\,\vd B_s \rangle  \notag\\
  &\ -P_{T-t}f(X_t)\int_0^t\langle W_s^{\tilde{h}}(v,h_s v),\ptr_s\,\vd B_s \rangle \notag\\
  &\ +P_{T-t}f(X_t)\l(\l(\int_0^t\langle \tilde{Q}_s(h_sv),
    \ptr_s\,\vd B_s \rangle\r)^2-\int_0^t |\tilde{Q}_s(h_sv)|^2\,\vd s\r)
\end{align*}
is a local martingale. This completes the proof.
\end{proof}

With the Lemmas \ref{lem3} and \ref{lem4},  we are now in position to prove Theorem \ref{local-them3}.

\begin{proof}[Proof of Theorem \ref{local-them3}]
Let  $h^{\eps}_s=0$
for $s\geq (T-\eps)\wedge \tau_k$. Let $B_k:=\{x: \rho_o(x)\leq k\}$ for $k\geq 1$.
By the strong Markov property, the boundedness of $P{\bf.}f$ on $[\eps,T]\times B_k$
and the boundedness of $|\bd P{\bf.}f|$ and $|\Hess_{P{\bf.}}f|$  on
$[\eps,T]\times B_k$ for $f\in \mathcal{B}_b(M)$, it follows from Lemma \ref{lem3} that
\begin{align*}
(\nabla \bd P_{T}f)(v,v)=&-\E\l[f(X_T^x)\int_0^{(T-\eps)\wedge \tau_k}\langle W_s^{\tilde{h^{\eps}}}({h}^{\eps}_sv,v), \ptr_s \vd B_s\rangle\r]\\
&+\E\l[f(X_T^x)\l(\l(\int_0^{(T-\eps)\wedge \tau_k}\langle \tilde{Q}_s({h}^{\eps}_sv),
      \ptr_s \vd  B_s \rangle\r)^2-\int_0^{(T-\eps)\wedge \tau_k} |\tilde{Q}_s({h}^{\eps}_sv)|^2  \,\vd s\r)\r].
\end{align*}
Letting $\eps\downarrow 0$,  we have
\begin{align*}
(\nabla \bd P_Tf)(v,v)&=-\E\l[f(X_T^x)\int_0^{T\wedge \tau_k}\langle W_s^{\tilde{h}_s}(h_s v,  v),\ptr_s\vd B_s \rangle\r]\\
&\quad +\E\l[f(X_T^x)\l(\l(\int_0^{T\wedge \tau_k}\langle \tilde{Q}_s(h_sv), \ptr_s\vd B_s \rangle \r)^2-\int_0^{T\wedge \tau_k} | \tilde{Q}_s(h_sv)|^2\,\vd s\r)\r].
\end{align*}
By Lemma \ref{lem4} and the observation that there exists a constant $c>0$ such that
\begin{align*}
&\E\l [\sup_{t\in [0,T]}\Big|\l(\l(\int_0^{t\wedge \tau_k}\langle \tilde{Q}_s(h_sv), \ptr_s\vd B_s \rangle \r)^2-\int_0^{t\wedge \tau_k} | \tilde{Q}_s(h_sv)|^2\,\vd s\r)\Big|\r] \\
&\leq c\, \E\l[\int_0^T\e^{\int_0^s K^-(X_s)\, \d s +\int_0^s \sigma^-(X_s) \, \d l_s} h_s^2\, \d s\r],
\end{align*}
we complete the proof  by Fatou's lemma.
\end{proof}

\subsection{Global Hessian estimates of the semigroup}
In this subsection, we continue the discussion on explicit global
estimates for $\Hess_{P_tf}$ under suitable conditions.  \medskip

For $\eps>0$, let 
\begin{align*}
\mathcal{D}_{\eps}:=\{\phi\in C_b^2(M):\, \inf \phi=1,\ N\log\phi \geq \sigma^-+\eps\}.
\end{align*}

\fbox{\,\parbox{0.95 \textwidth}{
    \noindent{\bf (B) }
    The functions $K, \sigma$ in \eqref{CD-1} and
    $\alpha, \beta, \gamma$ in \eqref{CD-2} are constant, and there exists $\phi\in \mathcal{D}_{\eps}$ for some $\eps>0$ such that 
\begin{align}\label{Kphiq}
K_{\phi,q}=\sup_{x\in M} \big\{-L\log \phi+2q |\nabla \log \phi|^2  \big\}<\infty
\end{align}
for some positive constant $q>1$.  }}



  \medskip
  \noindent By  \cite[Section 3.2]{Wbook14}, such $\phi$ can be constructed if $\pp M$ has strictly positive injectivity radius,   the sectional curvature of $M$ being bounded above and $Z$ bounded. In particular, if the manifold is compact, this condition are met automatically.
  Under the global bounds of condition {\bf (B)},  it holds that
\begin{align*}
\Ric_Z+L\log \phi-2|\nabla \log\phi|^2\geq K-K_{\phi},
\end{align*}
where we write $K_{\phi}:=K_{\phi,1}$ for simplicity. By  \cite[Theorem 2.2]{CTT18b}, we obtain
\begin{align}\label{HP1}
\|\nabla P_tf\|_{\infty}\leq \|\phi\|_{\infty}\|\nabla f\|_{\infty}\e^{-(K-K_{\phi,1})t},\quad t>0,
\end{align}
which implies that $|\nabla P{\bf.}f|$ is bounded on $[0,T]\times M$ for $f\in C^1_b(M)$.
  
  Next  local Bismut formulae, as the one
  in Theorem \ref{local-them3} for
  $\Hess_{ P_{t}f}$, permit us to show that for
  any $\eps>0$,
  \begin{equation}\label{bounded-eps}
  |\Hess_{P{\displaystyle\bf.}f}|\  \text{is bounded on $[\eps, T]\times M$}.
  \end{equation}
   This requires for $x\in M$ and a given relatively compact open
  neighbourhood $D$ of $x$, the construction of an adapted real process $h_t$
  such that $h_t=0$ for $t\geq T\wedge\tau_D$ and  
  $\int_0^{T\wedge\tau_D}h_t\,\vd t=-1$ with the property that 
$$\E\Big[\int_0^{T}h_t^{2p}\,\vd t\Big]<\infty,\ \mbox{and}\ \  \sup _{x\in M}\E^x[ \e^{q(\sigma^-+\eps)l_t}]<\infty$$ 
for $1/p+1/q=1$ and $p,q>1$,
 where $\tau_D:=\inf\{t\geq0\colon X_t^x\not\in D\}$ denotes the first exit time of $D$, see estimate \eqref{Hess_P_Tf_Estimate}.
  In Remark \ref{Remark_eps} below we briefly sketch the construction of processes $h$ 
with the required properties. Before this, let us introduce the conformal change of the metric such that the boundary under the
new metric is convex.

\begin{remark}[Conformal change of the metric]\label{conformal-metric}
We start with a conformal change of the metric $g$.  Since $\phi \in \mathcal{D}_{\eps}$, we have $\II\geq \sigma\geq -(\sigma^-+\eps) \geq  -N\log \phi$ and the boundary $\partial M$
is convex under the metric $g':=\phi^{-2}g$. Let $\Delta'$ and $\nabla'$ be the Laplacian and gradient operator
associated to the metric $g'$. Then
\begin{align*}
L=\phi^{-2}(\Delta'+\phi^2(Z+(d-2)\nabla \log \phi))=\phi^{-2}(\Delta'+Z')
\end{align*}
where $Z':=\phi^2(Z+(d-2)\nabla \log \phi)$.  Let $\rho'(x,y)$ be the geodesic distance from $x$ to $y$ with respect to the metric $g'$ on $M$.

Furthermore, let 
\begin{align*}
&U_i= \phi^{-1}(\gamma(s)) P'_{\gamma(0),\gamma (s)} V_i, \\
&J_i(s)=f(s) U_i, \quad 1\leq i\leq d, \
\end{align*}
where  $\{V_i\}_{i=1}^d$ is a $g'$-orthonormal basis of 
$T_xM$, $P'_{\gamma(0),\gamma(s)}$ denotes parallel displacement from $x$ to $y$ with respect to the metric $g'$ and  $f(s)=1\wedge \frac{s}{\rho(x,y)\wedge 1}$. Then $J_i(0)=0$ and $J_i(\rho')=\phi^{-1}(y)P'_{x,y}V_i,\ 1\leq i\leq d$,
\begin{align}\label{Laplacian-comparison}
&\phi^{-2}(\Delta'+Z')\rho'(x,\newdot)(y)\notag\\
&\leq \sum_{i=1}^d \int_0^{\rho'}\Big \{ (|\nabla'_{\dot{\gamma}}J_i|')^2-\langle R'(\dot{\gamma}, J_i)J_i, \dot{\gamma} \rangle'\Big\}(s)\, \vd s+ \phi^{-2}(y) Z'\rho'(x,\newdot)(y) \notag \\
&  \leq \sum_{i=1}^d \int_0^{\rho'} \Big\{ f'(s)^2\phi^{-2}(\gamma(s)) +f(s)^2(|\nabla'_{\dot{\gamma}}U_i|')^2-f(s)^2\langle R'(\dot{\gamma}, U_i)U_i, \dot{\gamma} \rangle'\Big\}(s)\, \vd s + \phi^{-2}(y) Z'\rho'(x,\newdot)(y).
\end{align}
On the other hand,
\begin{align}\label{phiZ}
&\phi^{-2}(y) Z'\rho'(x,\newdot)(y) \notag \\
& =\int_0^{\rho'} \frac{\vd}{\vd s} \l\{ f(s)^2\phi^{-2}(\gamma(s)) \langle Z'(\gamma(s)), \dot{\gamma} (s) \rangle' \r\}\, \vd s \notag \\
&= 2\int_0^{\rho'} f'(s)f(s)\phi^{-2}(\gamma(s)) \langle Z'(\gamma), \dot{\gamma}  \rangle'(s)+ f(s)^2\frac{\vd}{\vd s} \l\{\phi^{-2}(\gamma) \langle Z'(\gamma), \dot{\gamma}  \rangle' \r\}(s)\, \vd s  \notag \\
&=2\int_0^{\rho'} f'(s)f(s)\phi^{-2}(\gamma(s)) \langle Z'(\gamma), \dot{\gamma}  \rangle'(s)\,\vd s \notag  \\
&\quad +
\int_0^{\rho'}f(s)^2 \phi ^{-2}(\gamma(s))\langle (\nabla'_{\dot{\gamma}}Z')\circ \gamma, \dot{\gamma} \rangle'(s)\, \vd s \notag \\
&\quad -2\int_0^{\rho'} f(s)^2 \phi^{-2}(\gamma(s)) \langle \nabla \log \phi (\gamma(s)),\dot{\gamma}(s) \rangle \langle Z'(\gamma (s)), \dot{\gamma}(s) \rangle'\, \vd  s
\end{align}
Note that $|\dot{\gamma}|=\phi$.
We then conclude from \eqref{Laplacian-comparison} and \eqref{phiZ} that

\begin{align*}
&\phi^{-2}(y)(\Delta'+Z')\rho'(x,\newdot)(y)\\
&\leq  - \int_0^{\rho'} f(s)^2\phi^{-2}(\gamma(s)) \Big\{ (\Ric^Z)'(\dot{\gamma}(s),\dot{\gamma}(s))+(d-4)\langle \nabla \log \phi, \dot{\gamma}(s)  \rangle ^2+ 2 \langle Z, \dot{\gamma}(s) \rangle \langle \nabla \log \phi, \dot{\gamma}(s) \rangle  \Big \}\, \vd s \\
&\quad +2\int_0^{\rho'} f'(s)f(s)\phi^{-2}(\gamma(s)) \langle Z'(\gamma), \dot{\gamma}  \rangle'(s)\,\vd s \\
&\quad +
d\int_0^{\rho'}f'(s)^2 \phi ^{-2}(\gamma(s))\, \vd s \\
&\leq -(K-K_{\phi}) \rho'(x,y)+\frac{2}{\rho'(x,y)\wedge 1}\int_0^{\rho'(x,y)\wedge 1}\phi^{-2}(\gamma(s)) \big|\langle Z(\gamma(s)), \dot{\gamma}(s)  \rangle+(d-2) \langle \nabla \log \phi,\dot{\gamma}(s) \rangle\big |\,\vd s \\
&\quad +d \int_0^{\rho'(x,y)\wedge 1}  \frac{1}{(\rho'(x,y)\wedge 1)^2} \phi^{-2}(\gamma(s))\, \vd s \\
& \leq -(K-K_{\phi}) \rho'(x,y)+ 2\sup_{z\in B'(x,1)}(|Z|+(d-2)|\nabla \phi|)(z)+ \frac{d}{\rho'(x,y)\wedge 1}.
\end{align*}

The next step is to check that  for $\alpha>0$,
\begin{align}
&\sup_{x\in M}\E^x\big[ \e^{\frac{\alpha}{2}\sigma ^- l_t} \big]<\sup_{x\in M}\E^x\big[ \e^{\frac{\alpha}{2}(\sigma ^- +\eps) l_t} \big]<\|\phi\|_{\infty}^{\alpha} \exp {\Big(\frac{\alpha}{2}K_{\phi,\alpha} t \Big)} <\infty, \label{add-eq1}\\
& \sup_{x\in M}\E^x\big[ \e^{(\sigma ^-+\eps) l_t} \big]<\|\phi\|_{\infty} \exp {\Big(K_{\phi} t \Big)} <\infty, \label{add-eq2}
\end{align}
 where 
\begin{align*}
K_{\phi, \alpha}=\sup_{M} \l\{-L \log \phi +2\alpha|\nabla \log \phi|^2\r\},
\end{align*}
and  $K_{\phi}:=K_{\phi,1}$ for simplicity. 
By It\^{o}'s formula,
\begin{align*}
\vd \phi^{-\alpha}(X_t)&= \langle \nabla \phi^{-\alpha}(X_t), \ptr_t\vd B_t \rangle+\frac{1}{2}L\phi^{-\alpha}(X_t)\, \vd t+\frac{1}{2}N\phi^{-\alpha}(X_t)\, \vd l_t \\
 & \leq \langle \nabla \phi^{-\alpha}(X_t), \ptr_t\vd B_t \rangle-\alpha \phi^{-\alpha}(X_t)\l(-\frac{1}{2}K_{\phi,\alpha}\, \vd t+\frac{1}{2} N\log \phi(X_t)\, \vd l_t\r)  \\
 & \leq \langle \nabla \phi^{-\alpha}(X_t), \ptr_t\vd B_t \rangle-\alpha \phi^{-\alpha}(X_t)\l(-\frac{1}{2}K_{\phi,\alpha}\, \vd t+\frac{1}{2} (\sigma^-+\eps)\, \vd l_t\r), 
\end{align*}
then
\begin{align*}
\phi^{-\alpha}(X_t) \exp \l(-\frac{\alpha}{2}K_{\phi,\alpha} t+\frac{\alpha}{2}(\sigma^-+\eps)\, l_t\r)
\end{align*}
is a local submartingale. 
 Therefore,
by Fatou's lemma and taking into account that $\phi\geq 1$, we get
\begin{align*}
\E \l[\phi^{-\alpha}(X_t) \exp \l(-\frac{\alpha}{2}K_{\phi,\alpha}t+\frac{\alpha}{2}(\sigma^-+\eps)\, l_t\r)\r]\leq 1,
\end{align*}
which proves \eqref{add-eq1} and \eqref{add-eq2}.

\end{remark}

\begin{remark}[Construction of $h$]\label{Remark_eps}
  Let $D=B'(x,k)$ where $B'(x,k):=\big\{y\in M\colon \rho'(x,y)\leq k\big\}$ for some $k>0$.
  We search for an adapted real process $h=h_k$  satisfying  $\int_0^t (h_k)_s\, \vd s=-1$ for $t\geq T\wedge \tau_k$ and
  $$\E^x\l[\int_0^T(h_k^{2p})_s\, \vd s\r]<\infty$$
  where $\tau_k$ is the first exit time from $B(x,k)$. To $h_k$ we then consider
  $$(\tilde h_k)_t=1+\int_0^t(h_k)_s\,\vd s$$
  so that $(\tilde h_k)_0=1$ and $(\tilde h_k)_t=0$ for $t\geq T\wedge \tau_k$.
For $k>0$ let $$\theta_k(p)=\cos \l(\frac{\pi \rho'(x, p)}{2k}\r), \ \ p\in B(x,k).$$
Then set $(\tilde{h}_k)_s=(\tilde{h}\circ \ell_k)_s$ where a function $\tilde{h}\in C^1([0,T])$ is chosen so that
$\tilde{h}(0)=1, \ \tilde{h}(T)=0$  with $(\tilde{h})'=h$ and 
$$\ell_k(s)=\int_0^s \theta_k^{-2}(X_r(x))\1_{\{r<\sigma_k(T)\}}\, \vd r,$$
$$\sigma_k(s)=\inf\l\{r\geq 0:\ \int_0^r \theta_k^{-2}(X_u(x))\, \vd u \geq s \r\}.$$
This construction is due to \cite{TW98}, the claim follows from
\cite{TW11,TW98}, see  the proof of \cite[Lemma 2.1]{CTT18}
for the details.  For this $\tilde{h}_k$, we have 
\begin{align*}
\E \l[\int_0^{t\wedge \tau_k} {h}_k^{2p}(s)\, \vd s\r]&=\E \l[ \int_0^{\sigma(t)} (h\circ \ell_k)^{2p}(s)\theta_k^{-4p}(X_s(x))\, \vd s \r]\\
&=\int_0^t h^{2p}(s)\E \big[\theta_k^{-4p+2}(X_s'(x)) \big]\,\vd s
\end{align*}
where $X'(x)$ denotes the diffusion  starting at $x$ with generator $\frac{1}{2}\theta_k^2L$ which 
almost surely doesn't not exit $B'(x,k)$ by \cite[Proposition 2.3.]{TW98}. To estimate the integration we use
\begin{align*}
\frac{1}{2}\theta_k^2 L\theta_k^{-4p+2}=(2p-1)\theta_k^{-4p+2}\l[\frac{4p-1}{2}|\nabla \theta_k|^2-\theta_k L \theta_k\r]
\end{align*}
to obtain, via Ito's formula, Gronwall's lemma and the fact  $N\rho'(x,\cdot)\leq 0$, that 
\begin{align*}
\E[\theta_k^{-4p+2}(X_s'(x))]\leq \theta_k(x)^{-4p+2} \e^{c(\theta_k)s},
\end{align*} 
where
\begin{align*}
c(\theta_k)=(2p-1)\sup_{B'(x,k)}\l\{\frac{(4p-1)}{2}|\nabla \theta_k|^2-\theta_k L\theta_k \r\}.
\end{align*}
Using $\theta_k(x)=1$ and taking 
\begin{align*}
\tilde{h}(t)=1-\frac{c(\theta_k)}{p(1-\e^{-c(\theta_k)T/p})}\int_0^t\e^{-c(\theta_k)r/p}\, \vd r
\end{align*}
we obtain
\begin{align*}
\E \l[\int_0^{T\wedge \tau} {h}_k^{2p}(s)\, \vd s\r]&\leq \int_0^T \l(\frac{c(\theta_k)}{p}\r)^{2p}\frac{\e^{-c(\theta_k)s}}{(1-\e^{-c(\theta_k)T/p})^{2p}}\, \vd s  \\
&\leq \l(\frac{c(\theta_k)}{p}\r)^{2p} \frac{T}{(1-\e^{-c(\theta_k)T/p})^{2p}} \leq \frac{\e^{2c(\theta_k)T}}{ T^{2p-1}}. 
\end{align*}

Indeed, according to the definition of $\theta_k$, we have 
\begin{align*}
|\nabla \theta_k| \leq \frac{\pi}{2k},
\end{align*}
and by the Laplacian comparison theorem
\begin{align*}
-(\theta_k L\theta_k) (p)& \leq \cos \l(\frac{\pi \rho'(x, p)}{2k}\r) \sin \l(\frac{\pi \rho'(x, p)}{2k}\r)\frac{\pi}{2k} L\rho'(x,\newdot)(p)+\cos  \l(\frac{\pi \rho'(x, p)}{2k}\r)^2 \frac{\pi^2}{4k^2} \\
&\leq \frac{\pi^2 \rho'(x,p)}{4k^2}\l(c(x)+\frac{d}{\rho'(x,p)}+(K-K_{\phi})\rho'(x,p)\r)+ \frac{\pi^2}{4k^2} \\
&\leq \frac{c(x)\pi^2}{4k}+\frac{(d+1)\pi^2}{4k^2}+\frac{(K-K_{\phi})\pi^2}{4},\qquad  \rho'(x,p)\leq k
\end{align*}
for some constant $c(x)>0$, where
\begin{align*}
\Ric_Z+L\log \phi-2|\nabla \log \phi|^2\geq K-K_{\phi}.
\end{align*}
We then conclude that
$$c(\theta_k)\leq (2p-1) \l(\frac{c(x)\pi^2}{2k}+\frac{(2d+4p+1)\pi^2}{4k^2}+\frac{(K-K_{\phi})\pi^2}{2}\r).$$
Then by the local version of the Bismut type Hessian formula, we have
\begin{align*}
| \Hess_{ P_{T}f } |(x) & \leq 3\e^{K^-T} \|f\|_{\infty} \l[\E\int_0^T h_k^2(s) \e^{\sigma^- l_s}\, \vd s\r]^{1/2} \Bigg\{\l [(3+\sqrt{10})\alpha +\frac{\beta}{2}\r]\l(\E \int_0^{T} \e^{ \sigma^-  l_s} \, \vd s \r)^{1/2}\\
&\qquad + \frac{\gamma}{2}\l(\E\int_0^{T}\e^{ \sigma^-  l_s} \, \vd l_s\r)^{1/2}+\frac{2}{3}\l(\E\int_0^T h_k^2(s) \e^{\sigma^- l_s}\, \vd s\r)^{1/2} \Bigg\}\\
&\leq 3\e^{K^-T} \|f\|_{\infty} \l(\|\phi\|_{\infty} \e^{ K_{\phi, q}T}T^{1/q} \l(\frac{\e^{2c(\theta_k)T}}{ T^{2p-1}}\r)^{1/p}\r)^{1/2} \Bigg\{\l((3+\sqrt{10})\alpha +\frac{\beta}{2}\r)\l(\|\phi\|_{\infty} \e^{ K_{\phi}T}T\r)^{1/2}\\
&\qquad + \frac{\gamma}{2}\l(\frac{\|\phi\|_{\infty}\e^{K_{\phi}T}}{\sigma^-+\epsilon}\r)^{1/2}+\frac{2}{3}\l[\|\phi\|_{\infty} \e^{ K_{\phi, q}T}T^{1/q} \l(\frac{\e^{2c(\theta_k)T}}{ T^{2p-1}}\r)^{1/p}\r]^{1/2} \Bigg\}.
\end{align*}
When the manifold is non-compact, letting $k$ tend to $\infty$ yields
\begin{align*}
| \Hess_{ P_{T}f } |(x) &\leq 3\e^{K^-T}\|\phi\|_{\infty} \|f\|_{\infty} \l( \e^{ K_{\phi,q}T+\frac{2p-1}{p}(K-K_{\phi})\pi^2}T^{-1}\r)^{1/2} \Bigg\{\l ((3+\sqrt{10})\alpha +\frac{\beta}{2}\r)\l(\e^{ K_{\phi}T}T\r)^{1/2}\\
&\qquad + \frac{\gamma}{2}\l(\frac{\e^{K_{\phi}T}}{\sigma^-+\epsilon}\r)^{1/2}+\frac{2}{3}\l( \e^{ K_{\phi,q}T+\frac{2p-1}{p}(K-K_{\phi})\pi^2}T^{-1}\r)^{1/2} \Bigg\}<\infty
\end{align*}
for $T>0$.  
\end{remark}

\begin{theorem}\label{th4}
  Assume that condition {\bf (B)} holds.
  Let $h$  be a non-positive and adapted process satisfying  $\int_0^T h_s\,\vd s=-1$  and
  $$\E^x\l[\int_0^T(h^2_s+\tilde{h}^2_s)\e^{\sigma^-l_s}\, \vd s\r]<\infty,$$ 
where $\tilde h_t=1+\int_0^t h_s\,\vd s$. Then,   for
  $f\in \mathcal{B}_b(M)$ and $v\in T_xM$,
  \begin{align*}
    \Hess_{ P_{T}f} (v, v)(x)=
    &- \E^x \l[f(X_T)\int_0^T\langle W_s^{\tilde{h}}(v, h_s v),\ptr_s\vd B_s \rangle \r] \notag\\
    &+ \E^x \l[ f(X_T)\l(\l(\int_0^T\langle \tilde{Q}_s(h_sv),
      \ptr_s \vd  B_s \rangle\r)^2-\int_0^T |\tilde{Q}_s(h_sv)|^2  \,\vd s\r) \r].
  \end{align*} 
Moreover,  for $T>0$  and $f\in \mathcal{B}_b(M)$,
\begin{align*}
|\Hess_ {P_Tf}|(x)\leq & \l(\alpha+\frac{\beta}{2}\sqrt{T}+\frac{2}{T}\r)\e^{K^-T}\E^x[\e^{\sigma  l_T}](P_{T}f^2)^{1/2}\\
&+\frac{\gamma }{2\sqrt{T}}\e^{K^-T}\E^x\l[\e^{\sigma l_T}\r]^{1/2}\l[\E^x\l(\int_0^T\e^{\frac{1}{2}\sigma l_s}\, \vd l_s\r)^2\r]^{1/2}(P_{T}f^2)^{1/2}.
\end{align*}
\end{theorem}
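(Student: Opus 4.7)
The plan is to derive the formula by globalizing the local martingale of Lemma~\ref{lem3} and then to extract the Hessian bound from the formula via Cauchy-Schwarz combined with the bounds of Remark~\ref{conformal-metric}.

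First I would apply Lemma~\ref{lem3} to $D = B(x,k)$ with exit time $\tau_k$. The local martingale \eqref{Loc_mart_Hessian} at $t=0$ equals $\Hess_{P_T f}(v,v)(x)$, since $\tilde h(0)=1$ and both stochastic integrals vanish. Taking expectation at the stopping time $T\wedge\tau_k$ and exploiting $\tilde h(T)=0$ (which follows from $\int_0^T h_s\,\vd s=-1$), the first three terms of \eqref{Loc_mart_Hessian} cancel on $\{T\leq \tau_k\}$, yielding the claimed identity with all integrals truncated at $T\wedge\tau_k$ plus error terms supported on $\{\tau_k<T\}$. On the latter set the residual quantities involve $(\nabla\bd P_{T-\tau_k}f)$, $(\bd P_{T-\tau_k}f)$ and $P_{T-\tau_k}f$, all bounded on $[0,T]\times M$ under condition~{\bf (B)} by \eqref{HP1}, \eqref{bounded-eps} and $\|P_tf\|_\infty\leq\|f\|_\infty$. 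Non-explosion gives $\tau_k\uparrow\infty$ a.s., so these residuals vanish by dominated convergence, while the truncated integrals converge to the untruncated ones in $L^1$ using the standing integrability assumption and the bound $|\tilde Q_s|^2\leq \e^{K^- s + \sigma^- l_s}$, which itself follows from It\^o's formula on $|\tilde Q_s|^2$ and the curvature condition \eqref{CD-1}.

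Second, for the estimate, I would apply Cauchy-Schwarz to each of the two expectations in the formula so as to factor out $(P_T f^2)^{1/2}(x)=(\E^x[f(X_T)^2])^{1/2}$. Specializing to the canonical $h_s\equiv -1/T$ gives $\tilde h_s=1-s/T\in[0,1]$. For the $W$-term, $W_s^{\tilde h}(v,h_sv)=h_s W_s^{\tilde h}(v,v)$ decomposes as in the proof of Lemma~\ref{lem4} into three pieces driven by $\ptr_s\vd B_s$ (contributing~$\alpha$), $\vd s$ (contributing~$\beta$), and $\vd l_s$ (contributing~$\gamma$); under~{\bf (B)} the coefficients are constants, so combining $|\tilde Q_s|^2\leq \e^{K^- s + \sigma^- l_s}$ with It\^o's isometry and Jensen's inequality produces respectively the terms $\alpha$, $\tfrac{1}{2}\beta\sqrt{T}$, and the $\gamma$-contribution $\tfrac{\gamma}{2\sqrt{T}}\,\E^x\big[(\int_0^T\e^{\sigma l_s/2}\vd l_s)^2\big]^{1/2}$ with the $\e^{K^- T}\E^x[\e^{\sigma l_T}]$-type prefactor. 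For the second expectation, the product-rule identity \eqref{eqn} rewrites $(\int\langle\tilde Q_s(h_sv),\ptr_s\vd B_s\rangle)^2-\int|\tilde Q_s(h_sv)|^2\,\vd s$ as a martingale whose second moment, via the Burkholder-Davis-Gundy inequality and $|\tilde Q_s|^2\leq \e^{K^- s + \sigma^- l_s}$, yields the $2/T$ contribution.

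The main obstacle is the passage $k\to\infty$ in the stopped identity, and in particular the simultaneous control of the boundary residuals that appear on $\{\tau_k<T\}$ where the cancellation $\tilde h(T)=0$ is destroyed. Handling this cleanly requires a priori uniform-in-$x$ control on $|\Hess_{P_tf}|$, on $|\nabla P_tf|$ and on the exponential moments $\E^x[\e^{\alpha \sigma^- l_t}]$; these are precisely what condition~{\bf (B)} combined with the conformal-change constructions of Remark~\ref{conformal-metric} (yielding \eqref{HP1}, \eqref{bounded-eps}, \eqref{add-eq1} and \eqref{add-eq2}) was set up to deliver. Once the formula is established, the estimate follows from Cauchy-Schwarz and the explicit $h_s=-1/T$ bookkeeping sketched above.
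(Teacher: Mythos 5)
Your second step (the estimate) is in line with the paper: Cauchy--Schwarz against $(P_Tf^2)^{1/2}$ with $h_s=-1/T$, the decomposition of $W^{\tilde h}$ into the three pieces $\xi^{(1)},\xi^{(2)},\xi^{(3)}$ from Lemma \ref{lem4}, the bound $|\tilde Q_s|^2\le \e^{K^-s+\sigma^-l_s}$, and the product-rule identity \eqref{eqn} for the $2/T$ term; this is exactly how the paper derives the inequality.

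The globalization step, however, has a genuine gap. You stop the local martingale \eqref{Loc_mart_Hessian} at $T\wedge\tau_k$ and claim the residual terms on $\{\tau_k<T\}$ are harmless because $(\nabla\bd P_{T-\tau_k}f)$, $(\bd P_{T-\tau_k}f)$ and $P_{T-\tau_k}f$ are ``bounded on $[0,T]\times M$ by \eqref{HP1}, \eqref{bounded-eps}''. This is false for $f\in\mathcal B_b(M)$: \eqref{bounded-eps} gives boundedness of $|\Hess_{P_tf}|$ only on $[\eps,T]\times M$ for each fixed $\eps>0$ (with a bound blowing up like $T^{-1}$ as the time parameter tends to $0$, see Remark \ref{Remark_eps}), and \eqref{HP1} requires $f\in C_b^1(M)$; for merely bounded measurable $f$ the gradient behaves like $t^{-1/2}$ near $t=0$, cf.\ \eqref{GP1}. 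On $\{\tau_k<T\}$ the stopped identity evaluates these derivatives at the time $T-\tau_k$, which can be arbitrarily close to $0$, and since $\tilde h(\tau_k)\neq0$ these terms do not cancel; there is therefore no integrable dominating function and the ``dominated convergence as $\tau_k\uparrow\infty$'' argument does not go through. The paper avoids precisely this: in the local Theorem \ref{local-them3} the process $h^{\eps}$ is chosen to vanish after $(T-\eps)\wedge\tau_D$, so $\tilde h$ itself vanishes at the stopping time and kills the boundary residuals; and in Theorem \ref{th4} the identity is first proved with $f$ replaced by $P_\eps f$ (for which $|\nabla P_{\boldsymbol\cdot}f|$ and $|\Hess_{P_{\boldsymbol\cdot}f}|$ are bounded on the whole time interval, so \eqref{Loc_mart_Hessian} is a true martingale and one may take expectations at time $T$ directly), the integrability of the limiting terms being supplied by Lemma \ref{lem4}; the formula for $f$ is then recovered by letting $\eps\downarrow0$. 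To repair your argument you should either adopt this $f\mapsto P_\eps f$ regularization, or modify $h$ so that $\tilde h$ vanishes after the exit time, rather than stopping the given $h$ at $\tau_k$.
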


\begin{proof}
For the adapted process $h$, we see from Lemma \ref{lem4} that 
\begin{align*}
& \E\l[\sup_{t\in [0,T]} \Big | \int_0^t h_s \langle W_s^{\tilde{h}}(v,v), \, \ptr_s \vd B_s  \rangle \Big| \r]  \\
&\leq  3 \e^{K^-T}\l(\E^x\int_0^{T}  \e^{ \si^-  l_s}h^2_s \, \d s \r) ^{1/2} \Bigg\{ \alpha (3+\sqrt{10})\l(\E\int_0^T \e^{\sigma^-\,  l_s} \tilde{h}_s^2\, \vd s\r)^{1/2}\\
&\qquad + \frac{\beta}{2}\l(\E\int_0^T \e^{\sigma^-\,  l_s} \tilde{h}_s^2 \, \vd s\r)^{1/2} + \frac{\gamma}{2}\l(\E\int_0^T\e^{\sigma^-\,  l_s}\tilde{h}_s^2\, \vd l_s\r)^{1/2}\Bigg\} <\infty,
\end{align*}
and 
\begin{align*}
& \E \l[\Bigg|\l(\int_0^T\langle \tilde{Q}_s(h_sv),
      \ptr_s \vd  B_s \rangle\r)^2-\int_0^T |\tilde{Q}_s(h_sv)|^2  \,\vd s\Bigg|\r] \\
&\leq 2 \e^{K^-T} \E\l( \int_0^T  \e^{\sigma^-\,  l_s} \tilde{h}_s^2 \, \vd s\r)<\infty.
\end{align*}
Moreover, by \eqref{bounded-eps} both $|\nabla P\!{\bf.}f|$ and $|\Hess_{P{\displaystyle\bf.}f}|$ are bounded on $[\eps, T]\times M$.
We complete the proof  by following the steps as in the  proof of Theorem \ref{local-them3} 
to obtain from \eqref{Loc_mart_Hessian}
 \begin{align}\label{Est-eps}
    \Hess_{ P_{T}f} (v, v)(x)=
    &- \E^x \l[f(X_T)\int_0^T\langle W_s^{\tilde{h}}(v, h_s v),\ptr_s\vd B_s \rangle \r] \notag\\
    &+ \E^x \l[ f(X_T)\l(\l(\int_0^T\langle \tilde{Q}_s(h_sv),
      \ptr_s \vd  B_s \rangle\r)^2-\int_0^T |\tilde{Q}_s(h_sv)|^2  \,\vd s\r) \r].
 \end{align}
 Indeed, using the mentioned boundedness on $[\eps, T]\times M$, we get \eqref{Est-eps} first
 for $f$ replaced by $P_\eps f$ and from this \eqref{Est-eps} is obtained by letting $\eps$ tend to zero.
In particular, letting $h(s)=-\frac{1}{T}$ when $s\in [0,T]$ and $\tilde{h}(s)=\frac{T-s}{T}$ for $s\in [0,T]$, then
  \begin{align*}
|&\Hess_{P_Tf}|(x) \\
 &\leq (P_T|f|^2)^{1/2} \l[\E\l(\int_0^T\langle W_s^{\tilde{h}}(v, h(s) v),\ptr_s\vd B_s \rangle\r)^2\r]^{1/2} \\
&\qquad + 2 (P_T|f|^2)^{1/2} \int_0^T \e^{\sigma l_s -Ks}h^2(s)\,\vd s \\
&\leq \frac{1}{T} (P_T|f|^2)^{1/2}  \l(\E \l[\int_0^{T}\Big|\tilde{Q}_{s}\int_0^{s}\tilde{Q}_r^{-1} R(\ptr_r \vd B_r, \tilde{Q}_r(\tilde{h}(r)v))\tilde{Q}_r(v)\Big|^2\, \vd s \r]\r)^{1/2} \notag \\
&\quad+\frac{1}{2T} (P_T|f|^2)^{1/2} \l(\E\l[\int_0^{T}   \Big|\tilde{Q}_{s}\int_0^{s} \tilde{Q}_r^{-1} (\bd^*R-R(Z)+\nabla \Ric_Z)^{\sharp}(\tilde{Q}_r(\tilde{h}(r)v), \tilde{Q}_r(v))\,\vd r \Big|^2\, \vd s \r]\r)^{1/2} \notag\\
&\quad+ \frac {1}{2T}   (P_T|f|^2)^{1/2} \l( \E\l[\int_0^{T}  \Big|\tilde{Q}_{s}\int_0^{s} \tilde{Q}_r^{-1} (\nabla^2N-R(N))^{\sharp}(\tilde{Q}_r(\tilde{h}(r)v), \tilde{Q}_r(v))\,\vd l_r \Big|^2\, \vd s\r]\r)^{1/2} \\
&\quad + \frac{2}{T^2} (P_T|f|^2)^{1/2} \e^{K^-T}\int_0^T\E [\e^{\sigma l_s }]\,\vd s \\
&\leq  \l(\alpha+\frac{\beta}{2}\sqrt{T}+\frac{2}{T}\r)\e^{K^-T}\E^x[\e^{\sigma  l_T}](P_T|f|^2)^{1/2}\\
&\qquad +\frac{\gamma }{2\sqrt{T}}\e^{K^-T}\E^x\l[\e^{\sigma l_T}\r]^{1/2}\l[\E^x\l(\int_0^T\e^{\frac{1}{2}\sigma l_s}\, \vd l_s\r)^2\r]^{1/2}(P_T|f|^2)^{1/2}.\qedhere
\end{align*}
\end{proof}

\begin{corollary}\label{cor-global1}
 Assume that condition {\bf (B)} holds with $\sigma=\gamma=0$. Then
\begin{align*}
\l|\Hess _ {P_Tf}\r|\leq  \l(\alpha+\frac{\sqrt{T}}{2}\beta+\frac{2}{T}\r)\e^{K^-T}(P_T|f|^2)^{1/2}
\end{align*}
for $T>0$ and $f\in \mathcal{B}_b(M)$.
\end{corollary}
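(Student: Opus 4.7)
The plan is to apply Theorem~\ref{th4} directly, specializing to the values $\sigma\equiv 0$ and $\gamma\equiv 0$ imposed by the hypothesis, with the canonical choice $h_s=-1/T$ on $[0,T]$. First I would verify that the integrability assumption of Theorem~\ref{th4} is trivially satisfied: since $\sigma^-=0$, the weight $\e^{\sigma^- l_s}$ equals $1$, and with $\tilde h_s=(T-s)/T$ one has
\begin{equation*}
\E^x\l[\int_0^T(h_s^2+\tilde h_s^2)\e^{\sigma^- l_s}\,\vd s\r]=\frac{1}{T}+\frac{T}{3}<\infty,
\end{equation*}
so Theorem~\ref{th4} applies without any extra control on the reflecting process.

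Next I would read off the two summands in the conclusion of Theorem~\ref{th4}. Because $\sigma\equiv 0$, the factor $\E^x[\e^{\sigma l_T}]$ collapses to $1$, and the first summand reduces to $\big(\alpha+\tfrac{\beta}{2}\sqrt{T}+\tfrac{2}{T}\big)\e^{K^-T}(P_T|f|^2)^{1/2}$, which is exactly the desired right-hand side. Because $\gamma\equiv 0$, the second summand, which carries the prefactor $\gamma/(2\sqrt T)$ together with the boundary local-time integral $\big[\E^x(\int_0^T\e^{\sigma l_s/2}\,\vd l_s)^2\big]^{1/2}$, vanishes identically. Adding the two contributions yields the claimed inequality.

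The only point worth flagging is that, geometrically, the hypotheses $\sigma\equiv 0$ and $\gamma\equiv 0$ correspond to a convex boundary together with the vanishing of $\nabla(\nabla N)^{\sharp}+R(N)$ along $\pp M$; under these circumstances the Hessian bound is completely free of any boundary local-time factor even though $\pp M$ may be non-empty. As this is a direct specialization of Theorem~\ref{th4}, no genuinely new analytic obstacle arises; the main contribution of the corollary is to isolate the clean geometric regime in which the estimate becomes as simple as in the boundaryless case.
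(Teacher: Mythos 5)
Your proposal is correct and takes essentially the same route as the paper, whose proof consists of the single remark that the bound is a direct consequence of the estimate in Theorem~\ref{th4}. Your explicit specialization ($\sigma=\gamma=0$ collapsing the local-time factors to $1$ and killing the second summand, with $h_s=-1/T$ making the integrability hypothesis trivial) simply spells out what that direct consequence amounts to.
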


\begin{proof} This is a direct consequence of the estimate in Theorem \ref{th4}.
 \end{proof}
 
 \begin{remark} Note that the condition $-\nabla N \geq 0$, i.e. $\sigma=0$ 
implies  $\II\geq 0$.
It has been proved in \cite{Wbook14} that then
\begin{align}\label{GP1}
|\nabla P_tf |(x) \leq  \frac{ \e ^{K^-t}}{\sqrt{t}} (P_t|f|^2)^{1/2}<\infty
\end{align}
 for any $t>0$, see also \cite{Cheng_Thalmaier_Thompson:2018}.  
\end{remark}

\section{Hessian  formula with gradient terms}
The main theorem in this section relies on the fact that  under {\bf (B)}, along with suitable conditions,
the local martingale $M_t$ defined in \eqref{M-martingale}  is a true 
martingale. This fact will be exploited for further applications.

\begin{theorem}\label{Them-hessian-gradient}
  Assume that condition {\bf (B)} holds.
  For $T>0$ let $h\in C([0,T])$  such that $\int_0^T h(t)\, \vd t=-1$. Then, for $v\in T_xM$
  and $f\in C_b^1(M)$ such that
  $|\nabla P{\bf.}f|$ is bounded,
 \begin{equation}\label{Hessian-formula2}
    \Hess_{P_Tf}(v,v)=\E\l[-\bd f(\tilde{Q}_T(v))\int_0^T\langle \tilde{Q}_s(h(s)v),
    \ptr_s\vd  B_s\rangle+\bd f( W_T^{\tilde{h}}(v,v))\r]
  \end{equation}
  where $\tilde{h}(t)=1+\int_0^t h(s)\, \vd s$. Moreover,
\begin{align*}
|\Hess_{P_Tf}|& \leq  \l(\alpha \sqrt{T}+\frac{\beta}{2} T+\frac{1}{\sqrt{T}} \r) \E\l[\e^{\sigma^- l_T}\r]\e^{K^-T} \|\nabla f\|_{\infty} \\
&\quad  + \frac{\gamma}{2} \E \l[\e^{\frac{1}{2}\sigma^- l_T}\int_0^T\e^{\frac{1}{2}\sigma^- l_s}\, \vd l_s\r]\e^{K^-T} \|\nabla f\|_{\infty}.
\end{align*}
\end{theorem}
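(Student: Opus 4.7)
The strategy is to take the local martingale identity obtained during the proof of Lemma~\ref{lem3} and combine it with an integration-by-parts identity so that the surviving stochastic integral sits outside $\bd P_{T-t}f(\tilde{Q}_t(\bolddot))$ at time $T$, rather than inside a $\Hess$-valued term. First I record from that proof that
\[
M_t^{\tilde h}(v,v):=\nabla \bd P_{T-t}f(\tilde{Q}_t(\tilde h(t)v),\tilde{Q}_t(v))+\bd P_{T-t}f(W_t^{\tilde h}(v,v))
\]
satisfies that $M_t^{\tilde h}(v,v)-\int_0^t (\nabla \bd P_{T-s}f)(\tilde Q_s(h(s)v),\tilde Q_s(v))\,\vd s$ is a local martingale (cf.~\eqref{local-M1}). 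Next, using It\^o's formula together with the commutation rule \eqref{commutation-1} and the covariant SDE \eqref{tilde-Q} defining $\tilde Q_t$, the Ricci and boundary contributions cancel against those coming from $\bd LP_{T-t}f$, and the boundary term is killed by $N P_{T-t}f|_{\partial M}=0$; hence $t\mapsto \bd P_{T-t}f(\tilde Q_t(v))$ is a local martingale. Writing $Y_t:=\int_0^t \langle \tilde Q_s(h(s)v),\ptr_s\,\vd B_s\rangle$ and applying integration by parts to the product $\bd P_{T-t}f(\tilde Q_t(v))\cdot Y_t$, the quadratic covariation equals precisely $\int_0^t (\nabla \bd P_{T-s}f)(\tilde Q_s(h(s)v),\tilde Q_s(v))\,\vd s$, so
\[
\bd P_{T-t}f(\tilde Q_t(v))\cdot Y_t-\int_0^t (\nabla \bd P_{T-s}f)(\tilde Q_s(h(s)v),\tilde Q_s(v))\,\vd s
\]
is a local martingale. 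Subtracting this from the previous one gives that
\[
N_t:=M_t^{\tilde h}(v,v)-\bd P_{T-t}f(\tilde Q_t(v))\cdot Y_t
\]
is a local martingale on $[0,T]$.

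The second step is to upgrade $N_t$ to a true martingale on $[0,T]$ under condition \textbf{(B)} and the hypothesis that $|\nabla P_{\bolddot}f|$ is bounded. The bound $|\tilde Q_t|\leq \e^{-Kt/2-\sigma l_t/2}$ (from \eqref{tilde-Q}) combined with the exponential boundary moment estimate \eqref{add-eq2} from Remark~\ref{conformal-metric} gives $L^2$ control of $\tilde Q_t$ and $Y_t$; the arguments of Lemma~\ref{lem4} applied to the three components $\xi^{(1)},\xi^{(2)},\xi^{(3)}$ of $W_t^{\tilde h}$ give $L^2$ control of $W^{\tilde h}$ in terms of the constants $\alpha,\beta,\gamma$. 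Together with the boundedness of $\|\nabla P_\bolddot f\|_\infty$ and $\|\nabla \bd P_\bolddot f\|_\infty$ on $[\varepsilon,T]\times M$ guaranteed by \eqref{bounded-eps}, these estimates provide a uniform $L^2$ dominant on $\{N_t\}_{t\in[0,T]}$. Evaluating $\E[N_0]=\E[N_T]$ and using $\tilde h(0)=1$, $\tilde h(T)=0$, $\tilde Q_0=\id$, $W_0^{\tilde h}=0$, $P_0 f=f$ yields \eqref{Hessian-formula2}. If $f\in C_b^1(M)$ does not satisfy the Neumann condition, I first apply the identity to $P_\varepsilon f$ and let $\varepsilon\downarrow 0$, exactly as in step~(2) of the proof of Theorem~\ref{Hessian-th}.

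For the pointwise estimate I specialize to the deterministic choice $h(s)=-1/T$, so that $\tilde h(s)=(T-s)/T$. The first expectation in \eqref{Hessian-formula2} is bounded by $\|\nabla f\|_\infty$ times $\E[|\tilde Q_T v|^2]^{1/2}\E[\int_0^T |\tilde Q_s(h(s)v)|^2\,\vd s]^{1/2}$ via Cauchy--Schwarz and It\^o's isometry; inserting $|\tilde Q_t|\leq\e^{-Kt/2-\sigma l_t/2}$ produces the $T^{-1/2}\e^{K^-T}\E[\e^{\sigma^- l_T}]$ contribution. For the second expectation $\E[\bd f(W_T^{\tilde h}(v,v))]$ I use $|W_T^{\tilde h}|\leq |\xi_T^{(1)}|+|\xi_T^{(2)}|+|\xi_T^{(3)}|$; the $\xi^{(1)}$-piece is controlled by the inequality derived in the proof of Lemma~\ref{lem4} from the It\^o formula for $|\e^{-\int K^-/2-\int\sigma^- l/2}\xi^{(1)}|^2$, yielding the $\alpha\sqrt T$ term, while estimates \eqref{xi-2} and \eqref{xi-3} directly give the $\beta T/2$ term and the $\gamma$-term $\frac{\gamma}{2}\E[\e^{\sigma^-l_T/2}\int_0^T\e^{\sigma^- l_s/2}\,\vd l_s]$. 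Collecting everything and multiplying through by $\e^{K^-T}\|\nabla f\|_\infty$ yields the claimed bound.

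\textbf{Main obstacle.} The delicate step is the passage from local to true martingale for $N_t$ on $[0,T]$: the stochastic integral $Y_T$ has no a~priori $L^\infty$ bound, and $W^{\tilde h}_t$ involves the curvature drivers $\alpha,\beta,\gamma$ accumulated along the paths including the boundary. This is where condition \textbf{(B)}, the accompanying moment bound \eqref{add-eq2}, and the hypothesis of boundedness of $|\nabla P_{\bolddot}f|$ (which in turn uses \eqref{HP1} and hence the assumption $\phi\in\mathcal{D}_\varepsilon$) are all genuinely needed; once this uniform integrability is in place, the rest of the argument is essentially algebraic manipulation of the identities already derived in Lemmas~\ref{lem3} and~\ref{lem4}.
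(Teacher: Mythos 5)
Your proposal is correct and follows essentially the same route as the paper: the same local martingale identity from Lemma~\ref{lem3}/\eqref{local-M1}, the same integration by parts against $Y_t=\int_0^t\langle\tilde Q_s(h(s)v),\ptr_s\,\vd B_s\rangle$ using that $\bd P_{T-t}f(\tilde Q_t(v))$ is a local martingale, the same upgrade to a true martingale via condition \textbf{(B)}, \eqref{add-eq2}, the $\xi^{(1)},\xi^{(2)},\xi^{(3)}$ bounds of Lemma~\ref{lem4} and the $\eps$-regularization $f\mapsto P_\eps f$, and the same specialization $h=-1/T$ for the estimate. The only cosmetic difference is your stated reason for the regularization (Neumann condition rather than boundedness of $|\nabla\bd P_{\bolddot}f|$ only on $[\eps,T]\times M$), which does not affect the argument.
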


\begin{proof}
Recall that by \eqref{local-M1}
\begin{align*}
 \nabla \bd P_{T-t}f(\tilde{Q}_t(\tilde{h}(t)v), \tilde{Q}_t(v))+(\bd P_{T-t}f)(W_t^{\tilde{h}}(v,v))&-\int_0^t(\nabla \bd P_{T-s}f)(\tilde{Q}_s(h(s)
              v), \tilde{Q}_s(v))\,\vd  s
\end{align*}
is a local martingale. On the other hand,  we know from from the proof to Lemma \ref{lem3} that 
\begin{align*}
  &\int_0^t(\nabla \bd P_{T-s}f)(\tilde{Q}_s(h(s)v), \tilde{Q}_s(v))
    \,\vd s-\bd P_{T-t}f(\tilde{Q}_t( v))\int_0^t\langle \tilde{Q}_s(h(s)v),
    \ptr_s \vd   B_s\rangle
\end{align*}
is a local martingale as well. 
We conclude that
\begin{align*}
\nabla \bd P_{T-t}f(\tilde{Q}_t(\tilde{h}(t)v), \tilde{Q}_t(v))+(\bd P_{T-t}f)(W_t^{\tilde{h}}(v,v))&-\bd P_{T-t}f(\tilde{Q}_t( v))\int_0^t\langle \tilde{Q}_s(h(s)v),
    \ptr_s \vd   B_s\rangle
\end{align*}
is a local martingale.  
 As $\|R\|_{\infty}<\infty$, $\Ric\geq K$ for some constant $K$,
and   $-\nabla N \geq \sigma$ for some non-positive constant $\sigma$,
\begin{align*}
\big\|\bd ^*R+\nabla \Ric_Z^{\sharp}-R(Z)\big\|_{\infty}<\infty\quad  \mbox{and}\quad  \big\|\nabla^2 N+R(N)\big\|_{\infty}<\infty,
\end{align*}
we first get
\begin{align*}
&\sup_{s\in [0,T]}\E|\tilde{Q}_s(v)|^2\leq\sup_{s\in [0,T]}\e^{-Ks}\E[\e^{\sigma^- l_T}]<\infty,
\end{align*}
and then
\begin{align}\label{W-2}
&\E\l[|W_{t\wedge \tau_k}^{\tilde{h}}(v,v)|\r]  \notag\\
& \leq 
\E \l[\e^{K^-t \wedge \tau_k+\sigma^- l_{t\wedge \tau_k}}\r]^{1/2}\E\l[\e^{-K^-{t\wedge \tau_k}-\sigma^- l_{t\wedge \tau_k}}|\xi_{t\wedge \tau_k}^{(1)}|^2\r]^{1/2}+\frac{1}{2}\E\l[|\xi_{t\wedge \tau_k}^{(2)}|\r] + \frac 12  \E \l[|\xi_{t\wedge \tau_k}^{(3)}|\r] \notag \\
& \leq 
\alpha  \E \l[\e^{K^-t+\sigma^- l_t}\r]^{1/2}\l[\int_0^t \E[ \e^{-K^-s-\sigma^-  l_s}] \tilde{h}_s^2\, \vd s\r]^{1/2} +\frac{\beta}{2}\E\l[\e^{\frac{K^-}{2}t+\frac{\sigma^-}{2} l_t} \int_0^t \e^{\frac{K^-s}{2}+\frac{\sigma^- l_s}{2}}\tilde{h}_s\,\vd s \r] \notag \\
&\quad + \frac \gamma 2  \E \l[\e^{\frac{K^-}{2}t+\frac{\sigma^-}{2} l_t}\int_0^t\e^{\frac{K^-}{2}s+\frac{\sigma^-}{2} l_s}\tilde{h}_s\,\vd l_s \r]
\end{align}
where $\xi^{(1)}$, $\xi^{(2)}$ and $\xi^{(3)}$ are defined as above. Letting
$k$ tend to $\infty$ then yields
\begin{align*}
&\E \l[|W_t^{\tilde{h}}(v,v)|\r] <\infty.
\end{align*}
Recall that  $|\nabla P{\bf.}f|$ is bounded on $[0, T]\times M$ and $|\nabla \bd P{\bf.}f|$ is bounded on 
$[\eps, T]\times M$ for $0<\eps<T$, see Remark \ref{Remark_eps}.
Hence we may again the claimed formulas show first for $f$ replaced by $f_\eps:=P_\eps f$ and then take the
limit as $\eps\downarrow0$ in the final formulas. Hence we may assume that $|\nabla P{\bf.}f|$ and $|\nabla \bd P{\bf.}f|$ are bounded on 
$[0, T]\times M$, so that \eqref{local-M1} is a true martingale. By taking expectations and passing to the limit
as $\eps\downarrow0$, 
inequality \eqref{Hessian-formula2} is obtained.


Let now  $\tilde{h}_s=\frac{T-s}{T}$ for $s\in [0,T]$. 
It is straightforward to deduce from  \eqref{Hessian-formula2} and \eqref{W-2}  that
\begin{align*}
|\Hess_{P_Tf}|& \leq \|\nabla f\|_{\infty}\e^{-KT/2}\E[\e^{\sigma l_T/2}]\E\l[\int_0^T |\tilde{Q}_s|^2h(s)^2\, \vd s\r] ^{1/2} +\|\nabla f\|_{\infty} \E\l[|W_T^{\tilde{h}}(v,v)|\r]\\
&\leq  \|\nabla f\|_{\infty}\l(\alpha \sqrt{T}+\frac{\beta}{2} T+\frac{1}{\sqrt{T}} \r) \E\l[\e^{\sigma l_T}\r] \e^{K^-T} +\frac{\gamma}{2}\|\nabla f\|_{\infty}  \E \l(\e^{\frac{1}{2}\sigma l_T}\int_0^T\e^{\frac{1}{2}\sigma l_s}\, \vd l_s\r)\e^{K^-T}
\end{align*}
which shows the second claim.
\end{proof}

For manifolds with specific boundary properties,
more refined results can be derived.

\begin{corollary}\label{cor2}
Assume that the boundary $\partial M$ is empty or $-\nabla N \geq 0$ and
 $\nabla ^2N+R(N)=0$.
 Moreover,  suppose that $\Ric_V\geq K>0$, $\alpha:=\|R\|_{\infty}<\infty$ and
$\beta:=\|\nabla \Ric_V^{\sharp}+\vd^*R+R(\nabla V)\|_{\infty}<\infty.$
\begin{enumerate}[\rm(i)]
\item If $ \Ric_V\geq K, \ \nabla N \leq 0$, then for $f\in C_b^1(M)$,
  \begin{align*}
    |\Hess _{P_tf}|\leq \l(\frac{1}{\sqrt{\int_0^t\e^{Kr}\, \vd r}}+\frac{\alpha}{\sqrt{K}}+\frac{\beta}{K}\r) \e^{-{Kt}/{2}}(P_t|\nabla f|^2)^{1/2}.
  \end{align*}
\item If $\Ric_V=K$, $\nabla N=0, $ then for $f\in C_b^1(M)$,
  \begin{align*}
    |\Hess _{P_tf}|_{\HS}\leq   \l(\frac{1}{\sqrt{\int_0^t\e^{Kr}\, \vd r}}+\frac{n\alpha}{\sqrt{K}}+\frac{n\beta}{K}\r) \e^{-{Kt}/{2}}(P_t|\nabla f|^2)^{1/2}.
  \end{align*}
\end{enumerate}

\end{corollary}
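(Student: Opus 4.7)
The plan is to apply the Hessian formula \eqref{Hessian-formula2} of Theorem~\ref{Them-hessian-gradient} and to re-estimate its two terms by Cauchy--Schwarz against $(P_t|\nabla f|^2)^{1/2}(x)$, rather than by bounding $|\bd f|$ by its uniform norm as in the statement of that theorem. Under the hypotheses of the corollary one has $K>0$ (so $K^-=0$), $\sigma^-=0$ (because $-\nabla N\ge 0$), and $\gamma=0$ (because $\nabla^2N+R(N)=0$). The standard It\^o argument $\d|\tilde Q_s(v)|^2\le -K|\tilde Q_s(v)|^2\,\vd s$ then gives the pointwise bound $|\tilde Q_s(v)|\le \e^{-Ks/2}|v|$, and the boundary piece $\xi^{(3)}$ in the decomposition of $W_t^{\tilde h}$ from Lemma~\ref{lem4} disappears.

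The key free parameter is the process $h$. I would choose
\[
h(s)=-\frac{\e^{Ks}}{\int_0^t \e^{Kr}\,\vd r},\qquad \tilde h(s)=\frac{\int_s^t \e^{Kr}\,\vd r}{\int_0^t \e^{Kr}\,\vd r}\in[0,1],
\]
which satisfies $\int_0^t h(s)\,\vd s=-1$ and is the Cauchy--Schwarz minimiser of the weighted integral $\int_0^t h(s)^2\e^{-Ks}\,\vd s$ under that constraint, yielding the value $1/\int_0^t \e^{Kr}\,\vd r$. This is precisely the quantity appearing in the first summand of the target bound.

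With this choice I would next estimate the two pieces of \eqref{Hessian-formula2}. For the stochastic-integral term, factoring $\bd f(\tilde Q_t v)$ as $|\bd f|(X_t)\cdot|\tilde Q_t v|$ and combining Cauchy--Schwarz with It\^o's isometry gives
\[
\left|\E\l[\bd f(\tilde Q_t v)\int_0^t \langle \tilde Q_s(h(s)v),\ptr_s\vd B_s\rangle\r]\right|\le (P_t|\nabla f|^2)^{1/2}(x)\,|v|^2\e^{-Kt/2}\left(\int_0^t h(s)^2\e^{-Ks}\,\vd s\right)^{1/2}.
\]
For the term $\bd f(W_t^{\tilde h}(v,v))$, decompose $W_t^{\tilde h}(v,v)=\xi_t^{(1)}-\tfrac12\xi_t^{(2)}$ as in Lemma~\ref{lem4}. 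The differential inequality for $\e^{Ks}|\xi_s^{(1)}|^2$ together with $\tilde h_s\le 1$ yields $\E[|\xi_t^{(1)}|^2]\le (\alpha^2|v|^4/K)\e^{-Kt}$, and the pathwise inequality for $|\xi_s^{(2)}|$ gives $|\xi_t^{(2)}|\le (2\beta|v|^2/K)\e^{-Kt/2}$. A further Cauchy--Schwarz against $(P_t|\nabla f|^2)^{1/2}$ produces the summands $\alpha/\sqrt K$ and $\beta/K$ and finishes part~(i).

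Part~(ii) proceeds in the same way with the operator norm replaced by the Hilbert--Schmidt norm. Because $\Ric_V\equiv K$ and $\nabla N\equiv 0$, $\tilde Q_s=\e^{-Ks/2}\ptr_s$ is just scaled parallel transport, so I would repeat the argument on each pair $(e_i,e_j)$ of an orthonormal frame at $x$; tracing over the second argument supplies the extra factor $n$ in front of $\alpha$ and $\beta$. The main obstacle is administrative: Cauchy--Schwarz must be applied at the right level (pointwise in $x$ against $P_t|\nabla f|^2$) and the pathwise bound $|\tilde Q_s|\le \e^{-Ks/2}$ must be used before It\^o's isometry, so that the decay $\e^{-Kt/2}$ factors out cleanly and does not remain trapped inside the square root.
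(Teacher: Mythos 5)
Your proposal is correct and follows essentially the same route as the paper's own proof: the same Hessian formula \eqref{Hessian-formula2} estimated against $(P_t|\nabla f|^2)^{1/2}$ instead of $\|\nabla f\|_\infty$, the same choice $h(s)=-\e^{Ks}/\int_0^t\e^{Kr}\,\vd r$, the same decomposition of $W_t^{\tilde h}$ with $\xi^{(3)}$ vanishing and the bounds $\alpha K^{-1/2}\e^{-Kt/2}$, $\beta K^{-1}\e^{-Kt/2}$, and the same frame-summation giving the factor $n$ in part (ii) with $\tilde Q_s=\e^{-Ks/2}\ptr_s$. The only ingredient the paper adds that you should too is the one-line verification that $|\nabla P_s f|$ is bounded on $[0,t]\times M$ (from $-\nabla N\ge 0$, hence $\II\ge 0$, together with $\Ric_V\ge K$ and \cite[Corollary 3.2.6]{Wbook14}), which is a hypothesis of Theorem \ref{Them-hessian-gradient}.
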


\begin{proof}
If $\nabla N\leq 0$ and $\nabla ^2N+R(N)=0$, then 
$\II\geq 0$, which together with  $\Ric_Z\geq K$,
implies by \cite[Corollary 3.2.6]{Wbook14} that
$|\nabla P{\bf.}f|$ is bounded on $[0,t]\times M$.
Choosing $\tilde{h}$ such that $|\tilde{h}|\leq 1$, we have
\begin{align*}
\l(\E |W_t^{\tilde{h}}(v,v)|^2\r)^{1/2}&\leq \alpha \e^{-Kt/2}\l(\int_0^t \e^{-Ks}\, \vd 
s\r)^{1/2}+\frac{\beta}{2}
\e^{-Kt/2}\int_0^t \e^{-Ks/2}\, \vd s\\
&\leq \frac{\alpha}{\sqrt{K}}\e^{-{Kt}/{2}}+\frac{\beta}{K} \e^{-{Kt}/2}.
\end{align*}
Combining this with Theorem \ref{Them-hessian-gradient},  we conclude that 
\begin{align*}
|\Hess_{P_tf}| \leq (P_t|\nabla f|^2)^{1/2}\e^{-Kt/2}\l(\int_0^t\e^{-Ks}h^2(s)\, \vd s\r)^{1/2}+(P_t|\nabla f|^2)^{1/2}\l(\frac{\alpha}{\sqrt{K}}+\frac{\beta}{K} \r)\e^{-{Kt}/{2}}.
\end{align*}
The following choice of $h$:
\begin{equation}\label{eqn-k}
h(s)=-\frac{\e^{Ks}}{\int_0^t\e^{Kr}\, \vd r}, \qquad s\in [0,t],
\end{equation}
then leads to the first inequality.\smallskip

If $\nabla N=0$ and $\Ric_V=K$, then $\tilde{Q}_t=\e^{-{Kt}/{2}}\ptr_{t}$ and
 \begin{align*}
 &   \Hess_{P_tf}(v,v)\\
 &=\e^{-{Kt}/{2}}\E\l[-\bd f(\ptr_t v)\int_0^t\e^{-{Ks}/{2}}h(s)\langle   v,
    \vd  B_s\rangle+\bd f\l(\ptr_t\int_0^t\ptr_s^{-1}\e^{-{Ks}/{2}}R(\ptr_s\vd B_s,\ptr_s (\tilde{h}(s)v))(\ptr_s v)\r)\r]
    \\
    &\quad -\frac{1}{2}\e^{-{Kt}/{2}}\E\l[\bd f\l(\ptr_t\int_0^t \e^{-{Ks}/{2}}\ptr_s^{-1}(\bd ^*R-R(Z)+\nabla \Ric_Z)^{\sharp}(\ptr_s(\tilde{h}(s)v), \ptr_s v)\, \vd s \r)\r].
  \end{align*}
This implies
\begin{align*}
|\Hess_{P_tf}|_{\HS}&\leq \e^{-{Kt}/{2}} (P_t|\nabla f|^2)^{1/2}\l(\int_0^t \e^{-Ks}h(s)^2\, \vd s\r)^{1/2}+n\alpha (P_t|\nabla f|^2)^{1/2} \l(\int_0^t \e^{-Ks}\, \vd s\r)^{1/2}\\
&\quad+\frac{n\beta}{2}\e^{-{Kt}/{2}}(P_t|\nabla f|^2)^{1/2}\int_0^t\e^{-{Ks}/{2}}\,\vd s.
\end{align*}
Choosing $h$ as in \eqref{eqn-k} then yields item (ii).
\end{proof}

\medskip

\section{Stein method and log-Sobolev inequality}\label{Stein-log-Sobolev}
In this section, we consider  $L=\Delta-\nabla V$ for $V\in C^2(M)$ such that
$$\mu(\vd x)=\e^{-V(x)}{\mbox{vol}}(\vd x)$$
is a probability measure where $\mbox{vol}(\vd x)$ denotes the volume measure on $M$.
Let $P_t=\e^{\frac12 Lt}$ be the contraction 
semigroup generated by $L$ on $L^2(\mu)$ with Neumann boundary conditions.
In \cite{CTW}, we used the Hessian formula to establish an HSI inequality on manifolds
without boundary, which contains
the new quantity called Stein discrepancy and in a certain sense  
improves the  classical log-Sobolev inequality.

To establish such kind of log-Sobolev inequalities on manifolds with boundary,
we first adapt
the definition of Stein kernel and Stein discrepancy to manifolds with boundary.
A symmetric 2-tensor $\tau_{\nu}\colon M \rightarrow T^*M\times T^*M $ on $M$ is said to be a Stein kernel for
a probability measure $\nu$ on $M$ if $\tau_{\nu}(v, w)\in L^1(\nu)$
for every $v,w\in T_xM$, $x\in M$, and
\begin{align}\label{tau-nu}
\int \langle \nabla V, \nabla f \rangle \,\vd\nu=\int \langle \tau_{\nu}, \Hess f\rangle_{\HS} \,\vd\nu,\quad f\in \mathcal{C}^\infty_N(L),
\end{align}
where $\nabla V$ is the first order part of the operator $L$
and where $$\mathcal{C}^\infty_N(L)=\big\{f\in C^\infty(M)\colon Nf|_{\partial M}=0,\ Lf\in\B_b(M)\big\}.$$
Since
$$\int Lf \,\vd \mu=\int_{\partial M} N(f)\,\vd \mu=0$$
for $f\in \mathcal{C}^\infty_N(L)$, it is easy to see that the identity map ${\id}$ is a Stein kernel for $\mu$.
\begin{definition}\label{def-Stein-kernel}
 Let $\tau_{\nu}$ be a Stein kernel for $\nu$. The Stein discrepancy is defined as
$$S(\nu\,|\,\mu)^2=\inf\int_M |\tau_{\nu}-\id|_{\HS}^2 \,\vd\nu,$$
 where the infimum is taken over all Stein kernels of $\nu$, and takes the value $+\infty$ if no Stein kernel exists.
\end{definition}

Let us first recall the classical log-Sobolev inequality on Riemannian manifolds when the boundary is convex.
Assume that
\begin{align*}
\Ric_V:=\Ric+\Hess_ V\geq K,\quad \II\geq 0
\end{align*}
holds for some positive constant $K$. Then the classical logarithmic Sobolev
inequality with respect to the measure $\mu$ indicates that for every
probability measure $\vd \nu=h \,\vd\mu$ with smooth density $h\colon M\rightarrow \R_+$,
\begin{align}\label{HPtf-If}
H(\nu\,|\,\mu)\leq \frac{1}{2K} I(\nu\,|\,\mu),
\end{align}
where
\begin{align*}
H(\nu\,|\,\mu)=\int h\log h\, \vd\mu=\Ent_{\mu}(h)
\end{align*}
is the relative entropy of $\vd\nu=h\,\vd \mu$ with respect to $\mu$ and
\begin{align*}
I(\nu\,|\, \mu)=\int \frac{|\nabla h|^2}{h}\, \vd \mu =I_{\mu}(h)
\end{align*}
the Fisher information of $\nu$ (or $h$) with respect to $\mu$. This result is known as the Bakry-\'Emery criterion due to \cite{BE} for the logarithmic Sobolev
inequality.  Let us recall the following observations. 

\begin{lemma}\label{HI-NP}
Assume that
$$\Ric_V\geq K\ \mbox{and} \quad \II\geq 0$$
for some positive constant $K$, and let $\tau_{\nu}$
be a Stein kernel for $\vd \nu=h\,\vd \mu$ where $h\in C_0^{\infty}(M)$.
For $t>0$ let $\vd \nu^t=P_th\,\vd\mu$. Then
\begin{itemize}
  \item [(i)] (Integrated de Bruijn's formula)
  \begin{align*}
  H(\nu\,|\,\mu)=\Ent_{\mu}(h)=\frac12 \int_0^{\infty}I_{\mu}(P_th)\, \vd t;
  \end{align*}
  \item [(ii)] (Exponential decay of Fisher information) 
  \begin{align*}
  I_{\mu}(P_th)=I(\nu^t\,|\, \mu)\leq \e^{-Kt}I(\nu\,|\, \mu)=\e^{-Kt}I_{\mu}(h),\quad t\geq0.
  \end{align*}
\end{itemize}
\end{lemma}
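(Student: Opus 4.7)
The plan is to handle the two parts separately. Part (i) is a de Bruijn-type identity and uses only integration by parts with the Neumann boundary condition plus ergodicity of $P_t$. Part (ii) is the exponential decay of Fisher information in the style of Bakry--\'Emery, where the curvature bound $\Ric_V\geq K$ and the convexity $\II\geq 0$ enter decisively. Throughout, set $g_t:=P_t h$ and note that $g_t\in \mathcal{C}_N^\infty(L)$ once $h$ is smooth enough, so $\pp_t g_t=\tfrac12 Lg_t$ with $Ng_t|_{\pp M}=0$ and $\int Lg_t\,\vd\mu=0$.

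For (i), I differentiate under the integral:
\begin{equation*}
\ff{\vd}{\vd t}\Ent_\mu(g_t)=\int (\pp_t g_t)(1+\log g_t)\,\vd\mu=\ff12\int (Lg_t)\log g_t\,\vd\mu=-\ff12 \int\ff{|\nabla g_t|^2}{g_t}\,\vd\mu=-\ff12 I_\mu(g_t),
\end{equation*}
where the last but one equality is integration by parts, rigorously valid thanks to the Neumann condition on $g_t$. Under $\Ric_V\geq K>0$ and $\II\geq 0$, the Bakry--\'Emery criterion supplies exponential convergence $g_t\to 1$ in $L^1(\mu)$, hence $\Ent_\mu(g_t)\to 0$ as $t\to\infty$. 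Integrating the identity above on $[0,\infty)$ then yields (i).

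For (ii), let $\rho_t:=\log g_t$ and aim at the differential inequality
\begin{equation*}
\ff{\vd}{\vd t} I_\mu(g_t)\leq -K\,I_\mu(g_t),
\end{equation*}
from which Gronwall's lemma delivers (ii). To derive it, I differentiate $I_\mu(g_t)=\int |\nabla\rho_t|^2 g_t\,\vd\mu$, use $\pp_t\rho_t=\tfrac12 L\rho_t+\tfrac12|\nabla\rho_t|^2$, and invoke the Bochner--Weitzenb\"ock identity $\tfrac12 L|\nabla\rho|^2-\<\nabla\rho,\nabla L\rho\>=|\Hess_\rho|^2+\Ric_V(\nabla\rho,\nabla\rho)$. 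After reorganising and integrating by parts, the interior contribution equals $-\int \bigl(|\Hess_{\rho_t}|^2+\Ric_V(\nabla\rho_t,\nabla\rho_t)\bigr)\,g_t\,\vd\mu$, which the curvature bound controls by $-K\,I_\mu(g_t)$; the boundary contribution takes the form $-\int_{\pp M}\II(\nabla\rho_t,\nabla\rho_t)\,g_t\,\vd\mu_\pp$, and is non-positive thanks to $\II\geq 0$ together with the observation that $\nabla\rho_t$ is tangential to $\pp M$ (because $Ng_t|_{\pp M}=0$ forces $N\rho_t|_{\pp M}=0$).

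The main technical obstacle is to make the differentiation of $I_\mu(g_t)$ and the Bochner computation rigorous on a possibly non-compact $M$ with boundary. The standard workaround is to start with $h\in C_0^\infty(M)$ (or at least smooth, bounded, bounded away from zero with the right Neumann decay) so that the parabolic regularity of $P_t$ places $g_t$ in $\mathcal{C}_N^\infty(L)$ with enough integrability for every integration by parts to be legitimate, and then pass to general smooth positive densities by an approximation argument. The convexity hypothesis $\II\geq 0$ plays a dual role: it is precisely what makes the boundary term in the Bochner identity have the right sign, and it is the hypothesis under which the Bakry--\'Emery $\Gamma_2$-calculus closes cleanly, so nothing extraneous is needed beyond the assumptions stated.
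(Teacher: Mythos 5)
Your proof of part (i) is essentially the paper's argument: both differentiate the entropy along the Neumann semigroup and use the Neumann condition to kill the boundary term, identifying $\tfrac{\vd}{\vd t}\Ent_\mu(P_th)$ with $-\tfrac12 I_\mu(P_th)$; the paper merely phrases this by integrating $(\tfrac12 L-\partial_t)(P_th\log P_th)$ against $\mu$. For part (ii), however, you take a genuinely different route. The paper never touches the Bochner formula: it applies the already-established Bismut-type gradient estimate $|\nabla P_t g|\le \e^{-Kt/2}P_t|\nabla g|$ (from \eqref{Eq:gradient_formula} with $|Q_t|\le \e^{-Kt/2}$, the local-time factor being absent since $\II\ge0$ allows $\sigma=0$) to $g=(\sqrt h\,)^2$, uses Cauchy--Schwarz to get $|\nabla P_th|^2\le 4\e^{-Kt}(P_th)\,P_t|\nabla\sqrt h|^2$, divides by $P_th$, integrates, and uses the invariance of $\mu$ to obtain $\e^{-Kt}I_\mu(h)$ in three lines — no differentiation in $t$, no second-order identity, no boundary integral, since all the boundary and non-compactness issues are already absorbed in the gradient formula of Section 2. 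Your route — differentiating $I_\mu(g_t)=\int|\nabla\rho_t|^2g_t\,\vd\mu$, invoking Bochner, and using the boundary term $-\int_{\pp M}\II(\nabla\rho_t,\nabla\rho_t)\,g_t\,\e^{-V}\vd A\le0$ (your sign is correct for the paper's inward-normal convention, because $N\rho_t|_{\pp M}=0$ forces $\tfrac12 N|\nabla\rho_t|^2=\II(\nabla\rho_t,\nabla\rho_t)$ there) — is the classical $\Gamma_2$ computation and produces the same constant $K$; what it costs is exactly the rigor you defer to an unspecified approximation: differentiation under the integral, integrability of $|\Hess_{\rho_t}|^2g_t$ and of the boundary integrand on a non-compact $M$, and strict positivity of $g_t$. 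The paper's choice of the semigroup estimate is precisely what sidesteps these issues, which is worth noting as the trade-off between the two approaches.

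Two small repairs to your write-up. First, $L^1$-convergence $g_t\to1$ does not by itself give $\Ent_\mu(g_t)\to0$; the clean justification is to use your part (ii) together with the log-Sobolev inequality \eqref{HPtf-If}, namely $\Ent_\mu(g_t)\le\tfrac1{2K}I_\mu(g_t)\le\tfrac{\e^{-Kt}}{2K}I_\mu(h)\to0$ — the paper's proof of (i) needs the same vanishing implicitly. Second, since $h\in C_0^\infty(M)$ may vanish, $\rho_t=\log g_t$ and $I_\mu(h)$ should be interpreted by replacing $h$ with $h+\eps$ and letting $\eps\downarrow0$, exactly as the paper remarks after the lemma.
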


\begin{proof}
Since $N(P_tf\log P_tf)=0$ and $\l(\frac12 L-\frac{\partial}{\partial t}\r)(P_th\log P_th)=\frac{|\nabla P_th|^2}{2P_th}$, we have
\begin{align*}
H(\nu\,|\,\mu)&=\int_M h\log h\,\vd\mu=-\int_M\int_0^{\infty}\frac{\vd\l( P_th\log P_th\r)}{\vd t}\, \vd t\,\vd\mu \\
&=\int_0^{\infty}\l(\int_M\Big(\frac12 L-\frac{\partial}{\partial t}\Big)(P_th\log P_th) \,\vd \mu \r) \vd t \\
&=\frac12 \int_0^{\infty}\int_M\frac{|\nabla P_th|^2}{P_th}\,\vd \mu\, \vd t.
\end{align*}
The second assertion can be checked by observing first from the derivative formula that
\begin{align*}
  |\nabla P_th|^2
  &\leq \e^{-Kt}\l(P_t\big|\nabla (\sqrt{h} \,)^2\big|\r)^2\leq
   4\e^{-Kt} (P_th)\, P_t\big|\nabla \sqrt{h}\big|^2
\end{align*}
which implies 
\begin{align*}
  I_{\mu}(P_th)&=\int_M \frac{|\nabla P_th|^2}{P_th}\,\vd \mu \leq 4\int_M\e^{-Kt} \frac{(P_th) P_t|\nabla \sqrt{h}|^2}{P_th}\,\vd \mu\\
               &=4\int_M\e^{-Kt} P_t|\nabla \sqrt{h}|^2\,\vd \mu =4\e^{-Kt} \int_M |\nabla \sqrt{h}|^2\,\vd \mu\\
               &=\e^{-Kt}I(\nu\,|\, \mu)=\e^{-Kt}I_{\mu}(h).\qedhere
\end{align*}
\end{proof}

All expressions should be considered for $h+\eps$ as $\eps\downarrow0$.
We continue our discussion under the condition that $\nabla^2N+R(N)=0$
and $-\nabla N\geq 0$. The following assertions describe the
relationship between the relative entropy and Stein discrepancy.

\begin{lemma}\label{esti-S-th2}
 Assume that $\alpha:=\|R\|_{\infty}<\infty$, $\beta:=\|\nabla \Ric_V^{\sharp}+\vd^*R+R(\nabla V)\|_{\infty}<\infty$ and
 $$ \nabla^2N+R(N)=0.$$
  Let $\vd \nu=h\, \vd \mu$  for $h\in C_0^{\infty}(M)$. 
 \begin{itemize}
   \item [(i)] If
   $
\Ric_V\geq K, \  \nabla N \leq 0
$,
then
\begin{align*}
I_{\mu}(P_th)\leq n^2\l(\frac{1}{\sqrt{\int_0^t\e^{Kr}\, \vd r}}+\frac{\alpha}{\sqrt{K}}+\frac{\beta}{K}\r)^2\e^{-Kt} S^2(\nu\,|\, \mu).
\end{align*}
   \item [(ii)] If
   $
\Ric_V=K,\  \nabla N=0,
$
then
\begin{align*}
I_{\mu}(P_th)\leq \l(\frac{1}{\sqrt{\int_0^t\e^{Kr}\, \vd r}}+\frac{n\alpha}{\sqrt{K}}+\frac{n\beta}{K}\r)^2 \e^{-Kt}S^2(\nu\,|\, \mu).
\end{align*}
 \end{itemize}
\end{lemma}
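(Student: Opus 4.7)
The plan is to combine the defining property of the Stein kernel with the self-adjointness of $P_t$ on $L^2(\mu)$ and the Hessian estimate of Corollary~\ref{cor2}. The cornerstone is the dual identity
\[
\int \langle \nabla\phi, \nabla P_t h\rangle \,\vd\mu = \int \langle\tau_\nu - \id, \Hess_{P_t\phi}\rangle_{\HS}\, h\,\vd\mu,
\]
valid for smooth $\phi$ satisfying $N\phi|_{\partial M}=0$. To derive it, I would first use symmetry of $L$ with Neumann boundary condition to rewrite $\int\langle\nabla\phi, \nabla P_t h\rangle\,\vd\mu = \int (-L P_t\phi)\, h\,\vd\mu$, and then apply \eqref{tau-nu} to $f=P_t\phi$: since $\tr(\Hess f) = \langle \id, \Hess f\rangle_{\HS}$,
\[
\int(-Lf)\,\vd\nu = -\int \langle\id,\Hess f\rangle_{\HS}\,\vd\nu + \int \langle \nabla V,\nabla f\rangle\,\vd\nu = \int \langle\tau_\nu - \id,\Hess f\rangle_{\HS}\,\vd\nu,
\]
which is exactly the right-hand side of the claimed identity.

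Next I would plug in $\phi = \log P_t h$. Since $P_t h > 0$ by the strong minimum principle and $N P_t h|_{\partial M}=0$, we have $N\log P_t h|_{\partial M}=0$, so $\phi$ is admissible after the standard $h\leadsto h+\vv$ regularisation. With $\nabla\phi = \nabla P_t h / P_t h$ the identity collapses to
\[
I_\mu(P_t h) = \int \langle\tau_\nu - \id, \Hess_{P_t(\log P_t h)}\rangle_{\HS}\, h\,\vd\mu.
\]
The Cauchy--Schwarz inequality in $L^2(h\,\vd\mu)$ then gives
\[
I_\mu(P_t h) \leq \l(\int |\tau_\nu-\id|_{\HS}^2\, h\,\vd\mu\r)^{1/2}\!\l(\int |\Hess_{P_t(\log P_t h)}|_{\HS}^2\, h\,\vd\mu\r)^{1/2},
\]
and taking the infimum over all Stein kernels bounds the first factor by $S(\nu\,|\,\mu)$. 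For the second factor I invoke Corollary~\ref{cor2} with $g=\log P_t h$: in case (i) one has an operator-norm bound, which is converted to the $\HS$ norm at the cost of a dimensional factor, whereas case (ii) delivers the $\HS$-norm bound directly. Self-adjointness of $P_t$ then simplifies
\[
\int P_t\bigl(|\nabla\log P_t h|^2\bigr)\,h\,\vd\mu = \int |\nabla\log P_t h|^2\, P_t h\,\vd\mu = I_\mu(P_t h),
\]
so the Hessian integral is controlled by a dimensional constant times $C_t^2 I_\mu(P_t h)$, where $C_t$ is the prefactor from Corollary~\ref{cor2}. Substituting back yields an inequality of the form $I_\mu(P_t h) \leq c\,C_t\,S(\nu\,|\,\mu)\,\sqrt{I_\mu(P_t h)}$, which, once squared, gives the claimed bounds in (i) and (ii).

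The main obstacle is making the substitution $\phi = \log P_t h$ rigorous, since \eqref{tau-nu} is stated only for $f\in \mathcal{C}_N^\infty(L)$. I would handle this by a double approximation: first replace $h$ by $h+\vv$ so that $P_t h$ is uniformly bounded away from zero, and then approximate $\log(P_t h + \vv)$ by a sequence in $\mathcal{C}_N^\infty(L)$ obtained via smooth truncation outside balls $B(o,k)$, letting $k\to\infty$ and $\vv\downarrow 0$ through dominated convergence. The uniform integrability needed for this limit is supplied by the gradient estimate~\eqref{HP1} applied to the test function and by the Hessian estimate of Corollary~\ref{cor2} integrated against $h\,\vd\mu$; under the curvature and boundary hypotheses of (i) and (ii) both of these quantities are finite, so the passage to the limit is routine.
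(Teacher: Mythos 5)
Your argument is correct and follows essentially the same route as the paper: rewrite $I_\mu(P_th)$ via symmetry of $P_t$ as $-\int LP_t(\log P_th)\,\vd\nu$, use the Stein kernel identity \eqref{tau-nu} applied to $P_t(\log P_th)$ to get $\int\langle\tau_\nu-\id,\Hess_{P_t(\log P_th)}\rangle_{\HS}\,\vd\nu$, then Cauchy--Schwarz, the Hessian bounds of Corollary~\ref{cor2} for $g_t=\log P_th$, and the identity $\int P_t|\nabla g_t|^2\,\vd\nu=I_\mu(P_th)$ before squaring and taking the infimum over Stein kernels. Your added regularisation discussion ($h+\vv$ and truncation) only makes explicit a point the paper handles by its standing remark that all expressions are to be read for $h+\eps$ as $\eps\downarrow0$.
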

\begin{proof}
Let $g_t=\log P_th$.
 By the symmetry of $(P_t)_{t\geq 0}$ in $L^2(\mu)$,
\begin{align*}
I_{\mu}(P_th)=-\int (Lg_t)P_th\, \vd\mu=-\int (LP_tg_t)h\, \vd\mu=-\int LP_tg_t\, \vd\nu.
\end{align*}
Hence according to the definition of Stein kernel  and since $P_tg_t\in \mathcal{C}^\infty_N(L)$, we have
\begin{align*}
I_{\mu}(P_th)&=-\int \langle \id, \Hess _{P_tg_t} \rangle_{\HS}\, \vd\nu -\int \langle \nabla V, \nabla P_tg_t \rangle\, \vd\nu\\
&=\int \langle \tau_{\nu}-\id, \Hess _{P_tg_t} \rangle_{\HS} \, \vd\nu.
\end{align*}
This argument is due to \cite{LNP15} and connects the Fisher information
to the Stein discrepancy. 
We now first prove assertion (i). By the Cauchy-Schwartz inequality,
\begin{align*}
I_{\mu}(P_th)&=\int \langle \tau_{\nu}-\id, \Hess _{P_tg_t} \rangle_{\HS} \, \vd\nu\\
&\leq \l(\int |\tau_{\nu}-\id|_{\HS}^2\, \vd\nu\r)^{1/2}\l(\int |\Hess _{P_tg_t}|^2_{\HS}\, \vd\nu\r)^{1/2}\\
&\leq n\l(\int |\tau_{\nu}-\id|_{\HS}^2\, \vd\nu\r)^{1/2} \l(\frac{1}{\sqrt{\int_0^t\e^{Kr}\, \vd r}}+\frac{\alpha}{\sqrt{K}}+\frac{\beta}{K}\r)\e^{-\frac{K}{2}t}\l(\int P_t|\nabla g_t|^2\ \vd\nu\r)^{1/2}
\end{align*}
where Corollary \ref{cor2} is used for the function $g_t=\log P_th$.
Since
\begin{align*}
\int P_t|\nabla g_t|^2\, \vd \nu&=\int P_t|\nabla g_t|^2 h\,\vd\mu=\int |\nabla g_t|^2 P_th\, \vd\mu=\int \frac{|\nabla P_th|^2}{P_th}\,\vd\mu= I_{\mu} (P_th),
\end{align*}
it then follows that
\begin{align*}
I_{\mu} (P_th)\leq n^2 \l(\frac{1}{\sqrt{\int_0^t\e^{Kr}\, \vd r}}+\frac{\alpha}{\sqrt{K}}+\frac{\beta}{K}\r)^2\e^{-Kt} \int |\tau_{\nu}-\id|_{\HS}^2\, \vd\nu.
\end{align*}
Taking the infimum  over all Stein kernels of $\nu$, we finish the proof of (i).
Along the same steps, item (ii) can be
proved  by means of Corollary \ref{cor2} as well.
\end{proof}

Using the lemmata above, we are now in position to establish the following result.

\begin{theorem}\label{K-R1-s2}
Assume that $\alpha:=\|R\|_{\infty}<\infty, \  \beta:=\|\nabla \Ric_V^{\sharp}+\vd^*R+R(\nabla V)\|_{\infty}<\infty$ and
$$  \nabla ^2N+R(N)=0.$$
Let $\vd \nu=h\, \vd \mu$ with $h\in C_0^{\infty}(M)$.
\begin{enumerate}[\rm(i)] 
  \item If $\Ric_V\geq K, \  \nabla N \leq 0$, then
\begin{align*}
\quad H(\nu\,|\,\mu)
 &\leq\frac{1}{2K}\l(n^2(1+\eps)\l(\frac{\alpha}{\sqrt{K}}+\frac{\beta}{K}\r)^2S^2(\nu\,|\,\mu)\r)\wedge I(\nu\,|\, \mu)\\
&\quad-\frac{n^2}{2}\l(1+\frac{1}{\eps}\r)S^2(\nu\,|\,\mu)\ln\l(\frac{n^2(1+\frac{1}{\eps})KS^2(\nu\,|\,\mu) }{\Big(I(\nu\,|\,\mu)-n^2(1+\eps)\Big(\frac{\alpha}{\sqrt{K}}+\frac{\beta}{K}\Big)^2S^2(\nu\,|\,\mu)\Big)\vee 0+n^2(1+\frac{1}{\eps})K S^2(\nu\,|\,\mu)}\r)
\end{align*}
for every $\eps>0$. 
Moreover, if $\alpha=0$ and $\beta=0$, then
\begin{align*}
H(\nu\,|\, \mu)\leq  \frac{n^2}{2} S^2(\nu\,|\,\mu)\ln\l(1+\frac{I }{n^2KS^2(\nu\,|\,\mu)}\r).
\end{align*}
 \item If $\Ric_V= K, \  \nabla N = 0$, then
\begin{align*}
\quad H(\nu\,|\,\mu)
&\leq \frac{1}{2K}\l(n^2(1+\eps)\l(\frac{\alpha}{\sqrt{K}}+\frac{\beta}{K}\r)^2S^2(\nu\,|\,\mu)\r)\wedge I(\nu\,|\,\mu)\\
&\quad -\frac{1}{2}\l(1+\frac{1}{\eps}\r)S^2(\nu\,|\,\mu)\ln\l(\frac{(1+\frac{1}{\eps})KS^2(\nu\,|\,\mu) }{\Big(I(\nu\,|\,\mu)-n^2(1+\eps)\Big(\frac{\alpha}{\sqrt{K}}+\frac{\beta}{K}\Big)^2S^2(\nu\,|\,\mu)\Big)\vee 0+(1+\frac{1}{\eps})K S^2(\nu\,|\,\mu)}\r)
\end{align*}
for every $\eps>0$. 
 Moreover, if $\alpha=\beta=0$, then 
\begin{align*}
H(\nu\,|\, \mu)\leq \frac{1}{2} S^2(\nu\,|\,\mu)\ln\l(1+\frac{I }{KS^2(\nu\,|\,\mu)}\r).
\end{align*}
\end{enumerate}
\end{theorem}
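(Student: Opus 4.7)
The plan is to follow the strategy of \cite{CTW,LNP15} and combine the integrated de Bruijn formula with two competing upper bounds on the Fisher information along the semigroup flow. Writing $I=I(\nu\,|\,\mu)$, $S^2=S^2(\nu\,|\,\mu)$ and $A:=\frac{\alpha}{\sqrt{K}}+\frac{\beta}{K}$, Lemmas \ref{HI-NP}(ii) and \ref{esti-S-th2}(i) furnish, respectively,
\begin{align*}
I_\mu(P_th)\le \e^{-Kt}I\quad\text{and}\quad I_\mu(P_th)\le n^2\Bigl(\sqrt{\tfrac{K}{\e^{Kt}-1}}+A\Bigr)^2\e^{-Kt}S^2,
\end{align*}
while Lemma \ref{HI-NP}(i) gives $2H(\nu\,|\,\mu)=\int_0^\infty I_\mu(P_th)\,\vd t$. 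The goal is to integrate the pointwise minimum of the two bounds.

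First I would apply the elementary inequality $(a+b)^2\le(1+\eps^{-1})a^2+(1+\eps)b^2$ to the second estimate to decouple the $A$-term, yielding $B_2(t):=n^2\bigl[(1+\eps^{-1})\frac{K}{\e^{Kt}-1}+(1+\eps)A^2\bigr]\e^{-Kt}S^2$, and compare with $B_1(t):=\e^{-Kt}I$. Setting $\tilde I:=I-n^2(1+\eps)A^2S^2$, the crossover $B_1(t^\ast)=B_2(t^\ast)$ holds when $\e^{Kt^\ast}=1+n^2(1+\eps^{-1})KS^2/\tilde I$, provided $\tilde I>0$. In the degenerate case $\tilde I\le 0$ one has $B_2\ge B_1$ throughout, so $2H\le \int_0^\infty B_1\,\vd t=I/K$, which matches the stated bound since then $I\wedge n^2(1+\eps)A^2S^2=I$ and $\tilde I_+=0$ kills the logarithmic term. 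In the main case $\tilde I>0$ I would split the integral at $t^\ast$ and use the substitution $u=\e^{-Kt}$ to compute $\int_{t^\ast}^\infty\frac{K\e^{-Kt}}{\e^{Kt}-1}\,\vd t=-\e^{-Kt^\ast}-\ln(1-\e^{-Kt^\ast})$, together with $\int_{t^\ast}^\infty\e^{-Kt}\,\vd t=\e^{-Kt^\ast}/K$ and $\int_0^{t^\ast}\e^{-Kt}I\,\vd t=(I/K)(1-\e^{-Kt^\ast})$.

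Summing the two pieces and substituting the explicit $t^\ast$, the non-logarithmic exponential contributions collapse: using $I-\tilde I=n^2(1+\eps)A^2S^2$ the coefficient of $\e^{-Kt^\ast}$ telescopes to exactly $n^2(1+\eps)A^2S^2/K$, while the logarithm produces $n^2(1+\eps^{-1})S^2\ln\bigl(1+\tilde I/(n^2(1+\eps^{-1})KS^2)\bigr)$; rewriting the latter as the negative of the log of its reciprocal yields the statement. Part (ii) is proved by the identical argument using Lemma \ref{esti-S-th2}(ii); there the factor $n$ sits inside the bracket multiplying $\alpha,\beta$ and not the $\sqrt{K/(\e^{Kt}-1)}$ term, which has the effect of removing the factor $n^2$ in front of $(1+\eps^{-1})$ both in the logarithmic prefactor and inside the log. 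The corollary-type inequalities for $\alpha=\beta=0$ follow by specialization: no auxiliary $\eps$ is needed since $A=0$, the crossover reduces to $\e^{Kt^\ast}=1+cKS^2/I$ with $c=n^2$ in (i) and $c=1$ in (ii), and the same telescoping yields $\frac{c}{2}S^2\ln\bigl(1+\tfrac{I}{cKS^2}\bigr)$. The main technical step, and the hardest to anticipate, is the algebraic collapse of the $\e^{-Kt^\ast}$-coefficients into the clean form $n^2(1+\eps)A^2S^2/K$; this is precisely what makes the two-regime bound combine into the compact statement with the $\wedge$ on the first term and the logarithm of a ratio on the second.
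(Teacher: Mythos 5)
Your proposal is correct and takes essentially the same route as the paper: integrated de Bruijn's formula plus the two bounds of Lemmas \ref{HI-NP} and \ref{esti-S-th2}, the splitting $(a+b)^2\le(1+\eps^{-1})a^2+(1+\eps)b^2$, and a split of the time integral at the crossover time, which is precisely the optimizer in the paper's $\inf_{u>0}$ formulation, followed by the same algebraic collapse of the non-logarithmic terms to $C/K$ with $C=n^2(1+\eps)\bigl(\tfrac{\alpha}{\sqrt K}+\tfrac{\beta}{K}\bigr)^2S^2$. Your case distinction $\tilde I\le 0$ versus $\tilde I>0$ (giving the $\wedge$ and the $\vee\,0$ in the statement), the modification for part (ii), and the specialization $\alpha=\beta=0$ all coincide with the paper's argument.
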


\begin{proof}
We only need to prove the first estimate. To this end, we write $I=I(\nu\,|\,\mu)$ and $S=S(\nu\,|\,\mu)$ for simplicity.
By  Theorem \ref{esti-S-th2} and Lemma \ref{HI-NP}, we have
\begin{align*}
H(\nu\,|\, \mu)&\leq \frac12 \inf_{u>0}\l\{A\int_0^u\e^{-Kt}\,\vd t+B\int_u^{\infty}\frac{K}{\e^{Kt}(\e^{K t}-1)}\,\vd t+C\int_u^{\infty}\e^{-Kt}\, \vd t\r\}\\
&=\frac12 \inf_{u>0}\l\{\frac{A(1-\e^{-Ku})+C\e^{-Ku}}{K}+B\int_0^{\e^{-\alpha u}}\frac{r}{1-r}\,\vd r\r\}
\end{align*}
where 
\begin{align*}
&A=I(\nu\,|\, \mu);\ \  B=n^2\l(1+\frac{1}{\eps}\r)S^2(\nu\,|\, \mu);\\
&C=n^2(1+\eps)\l(\frac{\alpha}{\sqrt{K}}+\frac{\beta}{K}\r)^2S^2(\nu\,|\, \mu).
\end{align*}
It is easy to see that if $A\leq C$, then  $\inf$ is reached when $u$ tends to $\infty$; if 
$A>C$, then  $\inf$ is reached  for $\e^{\alpha u}=\frac{A-C+BK}{A-C}$ so
that
\begin{align*}
H(\nu\,|\,\mu)\leq \frac{C}{2K}+\frac{1}{2}B\ln \l(1+\frac{A-C}{BK}\r).
\end{align*}
We conclude that 
\begin{align*}
H(\nu\,|\,\mu)\leq \frac{C\wedge A}{2K}+\frac{1}{2}B\ln \l(1+\frac{(A-C)\vee 0}{BK}\r).
\end{align*}
The rest of the proof is the same replacing $B$ by
\begin{align*}
 &\l(1+\frac{1}{\eps}\r)S^2(\nu\,|\,\mu).
 \end{align*} The details are omitted here.
\end{proof}

Let $(M,g)$ be a connected complete Riemannian manifold $M$. Considering the specific case that $\Hess _V= K>0$,
then by Obata's Rigidity Theorem (see \cite[Theorem 2]{Tashiro:1965} or \cite[Theorem 3.4]{WY2014}), $M$ is
 isometric to $\mathbb{R}^n$. The following corollary shows that the result is
 consistent with Ledoux-Nourdin-Peccati \cite{LNP15} for the Gaussian measure on the Euclidean space $\R^n$.

\begin{corollary}\label{cor-Ricci-flat}
Let $(M,g)$ be a connected complete Riemannian manifold with boundary.  Assume that $\Hess _V= K>0$, $\nabla N=0$, and $\nabla^2N=0$. 
  Let $\vd \nu=h\, \vd \mu$  with $h\in C_0^{\infty}(M)$. Then,
\begin{align*}
I(\nu\,|\,\mu)\leq \frac12 S^2(\nu\,|\,\mu)\log \l(1+\frac{I(\nu\,|\,\mu)}{KS^2(\nu\,|\,\mu)}\r).
\end{align*}
\end{corollary}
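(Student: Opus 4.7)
The plan is to derive the stated inequality as the ``moreover'' conclusion of Theorem~\ref{K-R1-s2}(ii) after specializing to the flat rigidity case. First, I would invoke Obata's theorem (cited in the paragraph preceding the corollary) to deduce from $\Hess_V = K > 0$ together with completeness that $(M,g)$ is isometric to a suitable subset of Euclidean $\R^n$. Consequently the Riemann curvature tensor $R$ vanishes identically, giving $\alpha = \|R\|_\infty = 0$. Since $\Ric = 0$ on a flat space and $\Hess_V = K\,\id$ is parallel, $\Ric_V = K$ is covariantly constant so that $\nabla \Ric_V^\sharp = 0$; combined with $R \equiv 0$ (whence $\bd^* R = 0$ and $R(\nabla V) = 0$), this gives $\beta = 0$.

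Next I would translate the remaining hypotheses into those of Theorem~\ref{K-R1-s2}(ii): the assumption $\nabla N = 0$ directly furnishes the boundary requirement $\nabla N = 0$ there; the assumption $\nabla^2 N = 0$ combined with $R(N) = 0$ (since $R \equiv 0$) yields the compatibility condition $\nabla^2 N + R(N) = 0$; and $\Ric_V = K$ was already noted. The background condition \textbf{(B)} must also be checked, but since the model is a flat manifold with totally geodesic boundary a suitable $\phi \in \mathcal{D}_\eps$ can be produced by a smooth perturbation of the constant function $1$ supported near the boundary, arranged so that $N\log \phi|_{\pp M} = \eps$ and $K_{\phi,q}$ is finite.

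With all hypotheses of Theorem~\ref{K-R1-s2}(ii) verified and $\alpha = \beta = 0$, the ``moreover'' clause of that theorem applies verbatim and produces the upper bound $\frac{1}{2} S^2(\nu\,|\,\mu)\,\ln\!\left(1 + I(\nu\,|\,\mu)/(KS^2(\nu\,|\,\mu))\right)$ on the left-hand side of the corollary, which is exactly the inequality to be proved. No further optimization or auxiliary computation is required beyond unwinding the constants to $0$ in the general template.

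The main obstacle I foresee is the careful verification of condition~\textbf{(B)} in the degenerate situation $\sigma = 0$: since $\sigma^- = 0$ one still requires $N\log \phi \geq \eps$ strictly positive, so the trivial choice $\phi \equiv 1$ is excluded, and a boundary-adapted $\phi$ must be constructed. In the flat, totally geodesic setting one can take $\phi(x) = 1 + \eps\, \chi(r(x))$ for $r$ the distance to $\pp M$ and $\chi$ a smooth cutoff with $\chi'(0)=1$, compactly supported in a tubular neighborhood; direct computation shows $\inf \phi = 1$, $N\log \phi|_{\pp M} = \eps$, and $K_{\phi,q}$ is bounded. This verification is routine but needs to be written out to make the chain Corollary~$\Rightarrow$ Theorem~\ref{K-R1-s2}(ii) $\Rightarrow$ Lemma~\ref{esti-S-th2} $\Rightarrow$ Corollary~\ref{cor2} $\Rightarrow$ Theorem~\ref{Them-hessian-gradient} rigorous.
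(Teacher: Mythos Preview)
Your approach is correct and essentially the same as the paper's: both use Obata's rigidity to deduce flatness (hence $\alpha=\beta=0$) and then specialize the preceding theory to the case $\alpha=\beta=0$. The only cosmetic difference is that you invoke the ``moreover'' clause of Theorem~\ref{K-R1-s2}(ii) directly, whereas the paper goes one level lower to Lemma~\ref{esti-S-th2}(ii) (obtaining $I_\mu(P_th)\le \frac{K}{\e^{2Kt}-\e^{Kt}}S^2(\nu\,|\,\mu)$) and then repeats the de Bruijn integration argument from the proof of Theorem~\ref{K-R1-s2}; your route is slightly more economical, and your explicit discussion of condition~{\bf (B)} is additional care that the paper omits.
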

\begin{proof}
From the condition $\Hess _V= K>0$, we know that the manifold  $M$ is
 isometric to $\mathbb{R}^n$, i.e.
$\|R\|_{\infty}=0,$ $\nabla \Ric_Z=0$ and $\bd^*R=0$.
Then by Theorem \ref{esti-S-th2} (ii),
\begin{align*}
I_{\mu}(P_th)\leq \frac{K}{\e^{2Kt}-\e^{Kt}} S^2(\nu\,|\, \mu).
\end{align*}
The assertion can be obtained  by a same arguments as in the proof of Theorem \ref{K-R1-s2}.
\end{proof}

\subsection*{Conflict of Interest and Ethics Statements}
The authors declare that there is no conflict of interest. Data sharing is not applicable to this article as no data-sets were created or analyzed in this study.

\bibliographystyle{amsplain}%

\bibliography{Hessian-boundary-26}

\providecommand{\bysame}{\leavevmode\hbox to3em{\hrulefill}\thinspace}
\providecommand{\MR}{\relax\ifhmode\unskip\space\fi MR }
\providecommand{\MRhref}[2]{%
  \href{http://www.ams.org/mathscinet-getitem?mr=#1}{#2}
}
\providecommand{\href}[2]{#2}
\begin{thebibliography}{10}

\bibitem{APT2003}
Marc Arnaudon, Holger Plank, and Anton Thalmaier, \emph{A {B}ismut type formula
  for the {H}essian of heat semigroups}, C. R. Math. Acad. Sci. Paris
  \textbf{336} (2003), no.~8, 661--666. \MR{1988128}

\bibitem{BE}
Dominique Bakry and Michel \'{E}mery, \emph{Hypercontractivit\'{e} de
  semi-groupes de diffusion}, C. R. Acad. Sci. Paris S\'{e}r. I Math.
  \textbf{299} (1984), no.~15, 775--778. \MR{772092}

\bibitem{Bbook14}
Fabrice Baudoin, \emph{Diffusion processes and stochastic calculus}, European
  Mathematical Society / American Mathematical Society, 2014.

\bibitem{Bismut:1984}
Jean-Michel Bismut, \emph{Large deviations and the {M}alliavin calculus},
  Progress in Mathematics, vol.~45, Birkh\"{a}user Boston, Inc., Boston, MA,
  1984. \MR{755001}

\bibitem{CCT}
Qing-Qian Chen, Li-Juan Cheng, and Anton Thalmaier, \emph{Bismut-{S}troock
  {H}essian formulas and local {H}essian estimates for heat semigroups and
  harmonic functions on {R}iemannian manifolds}, Stochastics and Partial
  Differential Equations: Analysis and Computations (2022).

\bibitem{CTT18b}
Li-Juan Cheng, Anton Thalmaier, and James Thompson, \emph{Functional
  inequalities on manifolds with non-convex boundary}, Sci. China Math.
  \textbf{61} (2018), no.~8, 1421--1436. \MR{3833744}

\bibitem{CTT18}
\bysame, \emph{Quantitative {$C^1$}-estimates by {B}ismut formulae}, J. Math.
  Anal. Appl. \textbf{465} (2018), no.~2, 803--813. \MR{3809330}

\bibitem{Cheng_Thalmaier_Thompson:2018}
\bysame, \emph{Uniform gradient estimates on manifolds with a boundary and
  applications}, Anal. Math. Phys. \textbf{8} (2018), no.~4, 571--588.
  \MR{3881014}

\bibitem{CTW}
Li-Juan Cheng, Anton Thalmaier, and Feng-Yu Wang, \emph{Some inequalities on
  {R}iemannian manifolds linking entropy, {F}isher information, {S}tein
  discrepancy and {W}asserstein distance}, arXiv:2108.12755 (2021).

\bibitem{DTh2001}
Bruce~K. Driver and Anton Thalmaier, \emph{Heat equation derivative formulas
  for vector bundles}, J. Funct. Anal. \textbf{183} (2001), no.~1, 42--108.
  \MR{1837533}

\bibitem{EL94}
K.~David Elworthy and Xue-Mei Li, \emph{Formulae for the derivatives of heat
  semigroups}, J. Funct. Anal. \textbf{125} (1994), no.~1, 252--286.
  \MR{1297021}

\bibitem{Hsu}
Elton~P. Hsu, \emph{Multiplicative functional for the heat equation on
  manifolds with boundary}, Michigan Math. J. \textbf{50} (2002), no.~2,
  351--367. \MR{1914069}

\bibitem{Ikeda-Watanabe:1989}
Nobuyuki Ikeda and Shinzo Watanabe, \emph{Stochastic differential equations and
  diffusion processes}, second ed., North-Holland Mathematical Library,
  vol.~24, North-Holland Publishing Co., Amsterdam; Kodansha, Ltd., Tokyo,
  1989. \MR{1011252}

\bibitem{LNP15}
Michel Ledoux, Ivan Nourdin, and Giovanni Peccati, \emph{Stein's method,
  logarithmic {S}obolev and transport inequalities}, Geom. Funct. Anal.
  \textbf{25} (2015), no.~1, 256--306. \MR{3320893}

\bibitem{Li2021}
Xue-Mei Li, \emph{Hessian formulas and estimates for parabolic
  {S}chr\"{o}dinger operators}, J. Stoch. Anal. \textbf{2} (2021), no.~3, Art.
  7, 53. \MR{4304478}

\bibitem{Qian:1997}
Zhongmin Qian, \emph{A gradient estimate on a manifold with convex boundary},
  Proc. Roy. Soc. Edinburgh Sect. A \textbf{127} (1997), no.~1, 171--179.
  \MR{1433090}

\bibitem{St00}
Daniel~W. Stroock, \emph{An introduction to the analysis of paths on a
  {R}iemannian manifold}, Mathematical Surveys and Monographs, vol.~74,
  American Mathematical Society, Providence, RI, 2000. \MR{1715265}

\bibitem{StT98}
Daniel~W. Stroock and James Turetsky, \emph{Upper bounds on derivatives of the
  logarithm of the heat kernel}, Comm. Anal. Geom. \textbf{6} (1998), no.~4,
  669--685. \MR{1664888}

\bibitem{Tashiro:1965}
Yoshihiro Tashiro, \emph{Complete {R}iemannian manifolds and some vector
  fields}, Trans. Amer. Math. Soc. \textbf{117} (1965), 251--275. \MR{174022}

\bibitem{Tha97}
Anton Thalmaier, \emph{On the differentiation of heat semigroups and {P}oisson
  integrals}, Stochastics Stochastics Rep. \textbf{61} (1997), no.~3-4,
  297--321. \MR{1488139}

\bibitem{TW98}
Anton Thalmaier and Feng-Yu Wang, \emph{Gradient estimates for harmonic
  functions on regular domains in {R}iemannian manifolds}, J. Funct. Anal.
  \textbf{155} (1998), no.~1, 109--124. \MR{1622800}

\bibitem{TW11}
\bysame, \emph{A stochastic approach to a priori estimates and {L}iouville
  theorems for harmonic maps}, Bull. Sci. Math. \textbf{135} (2011), no.~6-7,
  816--843. \MR{2838103}

\bibitem{Th19}
James Thompson, \emph{Derivatives of {F}eynman-{K}ac semigroups}, J. Theoret.
  Probab. \textbf{32} (2019), no.~2, 950--973. \MR{3959634}

\bibitem{Wbook14}
Feng-Yu Wang, \emph{Analysis for diffusion processes on {R}iemannian
  manifolds}, Advanced Series on Statistical Science \& Applied Probability,
  vol.~18, World Scientific Publishing Co. Pte. Ltd., Hackensack, NJ, 2014.
  \MR{3154951}

\bibitem{Wang:2020}
\bysame, \emph{Gradient and {H}essian estimates for {D}irichlet and {N}eumann
  eigenfunctions}, Q. J. Math. \textbf{71} (2020), no.~1, 379--394.
  \MR{4077200}

\bibitem{WY2014}
Guoqiang Wu and Rugang Ye, \emph{A note on {O}bata's rigidity theorem}, Commun.
  Math. Stat. \textbf{2} (2014), no.~3-4, 231--252. \MR{3326231}

\end{thebibliography}

\end{document}